\newcommand{\disk}{\ensuremath{\mathbb{D}} } 
\newcommand{\sphere}{\overline{\mathbb{C}}}
\theoremstyle{plain}
        \newtheorem{theorem}{Theorem}[section]
        \newtheorem{lemma}[theorem]{Lemma}
        \newtheorem{proposition}[theorem]{Proposition}
        \newtheorem{corollary}[theorem]{Corollary}
\theoremstyle{definition}
        \newtheorem{definition}[theorem]{Definition}
\theoremstyle{remark}
    \newtheorem{remark}[theorem]{Remark}
\numberwithin{equation}{section} 
\numberwithin{figure}{section} 
\author{Eric Schippers}
\title{Conformal invariants associated with quadratic differentials}
\begin{document}
\begin{abstract}  In \cite{Nehari_some_inequalities} Z. Nehari
 developed a general technique for obtaining inequalities for conformal
 maps and domain functions  from
 contour integrals and the Dirichlet principle.   Given a
 harmonic function with singularity on a domain $R$, it
 associates a monotonic functional of
 subdomains $D \subseteq R$.
 In the case that $R$ is conformally equivalent to a disk, we extend Nehari's method
 by associating a functional to any quadratic differential on $R$ with specified
 singularities.  Nehari's method corresponds to the special case that the quadratic
 differential is of the form $(\partial q)^2$ for a singular harmonic function $q$
 on $R$.  Besides being more general, our formulation
 is conformally invariant, and has a particularly elegant equality statement.
 As an application we give a one-parameter family of monotonic, conformally
 invariant functionals which correspond to growth theorems
 for bounded univalent functions.  These generalize and interpolate the Pick growth theorems,
 which appear in a conformally invariant form equivalent to a two-point distortion theorem
 of W. Ma and D. Minda.
\end{abstract}

\maketitle
\begin{section}{Introduction}
\begin{subsection}{Statement of results}
 Let $D_1$ be conformally equivalent to the unit disk $\disk = \{ z\,:|z|<1 \}$
 and $D_2 \subseteq D_1$ be a sufficiently regular simply connected domain (we will
 make this precise below).   Let
 $Q(z)dz^2$ be a quadratic differential such that the boundary of $D_1$ is
 network of trajectories of $Q(z)dz^2$.  Assume that $Q(z)dz^2$ has finitely
 many poles in $D_1$.
 In this paper, we define a functional $m(D_1,D_2,Q(z)dz^2)$ with the following
 properties:
 \begin{enumerate}
  \item The functional $m(D_1,D_2,Q(z)dz^2)$ is conformally invariant (Theorem \ref{th:conformal_invariance}).
  \item The functional is monotonic, in the sense that
  \[  D_3 \subseteq D_2 \subseteq D_1 \Rightarrow
   m(D_1,D_3,Q(z)dz^2) \leq m(D_1,D_2,Q(z)dz^2)  \] (Corollary \ref{co:monotonicity}).
  \item The functional is bounded by zero: $m(D_1,D_2,Q(z)dz^2) \leq 0$ and equality
  holds for $D_2=D_1$ (Theorem \ref{th:positivity}).  Furthermore, equality holds if and only if $D_1 \backslash D_2$
  consists of trajectories of the quadratic differential $Q(z)dz^2$ (Theorem \ref{th:equality}).
 \end{enumerate}
 This can be used to give an infinite series of families of inequalities
 for bounded univalent functions, one for each quadratic differential.
 Furthermore, these inequalities are given in terms
 of monotonic coefficient functionals.

 The invariants are constructed using a generalization of a technique of Nehari.  In this technique, the
 functionals were generated not by a quadratic differential but rather by
 a harmonic function with singularities.  Our approach results in several
 improvements which we list here, to be explained in the next section.
 \begin{enumerate}
  \item The role of quadratic differentials is made clear (they generate
  the functional, rather than appear only in a necessary condition for
  extremality).
  \item The invariants are manifestly conformally invariant (Theorem \ref{th:conformal_invariance}).
  \item The set of functionals is significantly larger (one for each quadratic differential).
  \item We explicitly identify the remainder term (that is, the difference
  between the values of the functional on nested domains) itself as a
  conformal invariant associated to a quadratic differential (Theorem \ref{th:monotonicity_with_remainder}).
 \end{enumerate}

 The reader will likely observe that the results of this paper have straightforward
 generalizations to
 hyperbolic Riemann surfaces.  In fact, the conformal invariants
 can be shown to be a special case of a modular invariant
 on Teichm\"uller space, as will be shown in a future publication.  Here we have restricted to two simply connected domains in
 order to make
 a clear and comprehensive connection with bounded univalent functions.  We will treat
  generalizations in future publications.

  For every quadratic differential admissible for the disk, the general theorem produces a distinct,
  sharp estimate on a monotonic functional on  the set of bounded univalent functions.  Furthermore this
  functional is automatically conformally invariant.  Not unexpectedly, the derivation of an expression for the
  functional in terms of a conformal map requires a lengthy computation for each choice of
  quadratic differential.  We will give a
  collection of examples with K. Mather \cite{MatherSchippers}.  To illustrate
  the method here, we give a new one parameter
  family of growth theorems, generalizing
  the Pick growth theorem for bounded univalent function.  These inequalities in fact interpolate the
  upper and lower bounds by allowing the zero of the quadratic differential to vary over the boundary.
  It also leads naturally to a monotonic form of a two-point distortion theorem of Ma and Minda by
  placing the single and double pole in arbitrary location.  The upper and lower bound are obtained
  by choosing the zero at one of two possible endpoints of the hyperbolic geodesic through the poles.
\end{subsection}
\begin{subsection}{Literature and context}
 In \cite{Nehari_some_inequalities} Nehari systematically associated positive monotonic
 quantities to harmonic functions with singularities, as a method of obtaining inequalities
 in function theory.  This can be thought
 of as a variant on the method of contour integration, with unifying emphasis on the Dirichlet principle.
 One advantage of the Nehari's method
 is that it produces inequalities for higher-order derivatives of mapping functions or
 domain functions (such as Green's function or Neumann's function) fairly naturally,
 by choosing the order of the singularity.  The second author used this in
 \cite{Schippers_conf_inv_analyse} \cite{Schippers_conf_inv} in order to obtain
 estimates on higher-order conformal invariants.

 However, Nehari's method skips every other order of differentiation.  Furthermore
 it is not manifestly conformally invariant; the conformal invariance must be
 imposed in ad hoc ways \cite{Schippers_conf_inv_analyse}.
 This paper remedies both problems, by identifying
 the role of quadratic differentials in Nehari's method,
 while at the same time extending the method.  Nehari's method corresponds
 to the special case that the quadratic differential is a perfect square
 of the form $(\partial q)^2$ for a singular harmonic function $q$.  As mentioned
 above, we restrict
 to the simply-connected case in this paper.

 Conformal invariants are associated with several methods for producing inequalities
 for mapping functions or domain functions.
 Three examples are capacitance (e.g. A. Baernstein and A. Solynin \cite{Baernstein_Solynin},
 P. Duren and J. Pfaltzgraff \cite{DurenPfaltzgraff}; or the monograph of
 V. Dubinin \cite{Dubinin_book}), extremal length (see the monographs of
 J. Jenkins \cite{Jenkins_book}, G. Kuz'mina \cite{Kuzmina_mono} and A. Vasil'ev \cite{Vasilev_monograph})
 or Dirichlet energy (see Nehari \cite{Nehari_some_inequalities} or Dubinin \cite{Dubinin_book}),
 Of course, given the connections between these conformal invariants \cite{Ahlfors_conf_inv}, there is
 not always a clear
 boundary between these methods.

 Given the Ahlfors-Beurling theorem connecting Dirichlet energy and extremal length
 \cite[Theorem 4-5]{Ahlfors_conf_inv}, one might expect that the conformal
 invariance of Nehari's functionals is automatic.  This is not the case,
 since the
 harmonic function has a singularity and the theorem does not apply.
 Quadratic differentials of order two are associated with reduced modules; however,
 as far as the authors are aware there are no examples of conformally invariant
 reduced modules of higher order.  In the literature one often finds the restriction
 to quadratic differentials with poles of order two or lower, even when
 conformal invariance is not explicitly demanded.  In \cite{MatherSchippers}
 we will give explicit computations of the conformal invariants given here
 for various orders of the poles.
 Reduced modules of higher order
 are implicit
 in the so-called general coefficient theorem and the length-area method in general (see e.g.
 Jenkins \cite{Jenkins_certain_coefficients},  \cite{Jenkins_book} and
 Schmidt \cite{Schmidt}); however they
 are not manifestly conformally invariant.
\end{subsection}
\end{section}
\begin{section}{Quadratic differentials and simple domains}
\begin{subsection}{Quadratic differentials: definition and terminology}
 In this section we review some basic facts about quadratic differentials.
 In order to
 properly formulate conformal invariance and admissibility of quadratic differentials,
 it is necessary to use the formalism of the double and
 boundary of bordered surfaces.  This is also required since we define the invariants
 by lifting to branched double covers of the disk, which cannot in general be regarded as
 subsets of $\mathbb{C}$.

 Every Riemann surface in this paper $R$ will
 be a bordered Riemann surface in the sense of Ahlfors and Sario \cite{Ahlfors_Sario_book}.
 For our purposes we may state this as follows.   $R$ has a double $S$; for each point $p \in \partial R \subset S$ there is an
 open set $U$ of $S$ and local biholomophic parameter $\phi: U \rightarrow \mathbb{C}$ such
 that $\phi(U)$ is
 an open subset of $\mathbb{C}$ such that (1) $\phi(U \cap \partial R)$ is an open interval
 of $\mathbb{R}$,  (2) if $q \in U$ then the conjugate point $q^*$ is in $U$ and
 (3) $\phi(q^*) = \overline{\phi(q^*)}$.  Thus $\phi$ is obtained from $\left. \phi \right|_{U \cap R}$
 by Schwarz reflection.  We call such a chart a boundary coordinate and the image points $z \in \phi(U)$
 (as a function of points on the surface $R$) as a boundary parameter.  For an arbitrary
 chart $\phi$ on $R$ we refer to images points as local parameters.  Observe that the
 boundary of $R$ is an analytic curve in the double.

 A quadratic differential on a bordered Riemann surface $R$ is a meromorphic $2$-differential on $R$;
 locally it can be written $Q(z)dz^2$ for a local parameter $z$ \cite{Lehtobook}.

  \begin{definition}
  Let $\alpha$ be a quadratic differential on a bordered Riemann surface $R$ and
  let $\Gamma:(a,b) \rightarrow R$ be a smooth curve.   We say that
  $\Gamma$ is a trajectory if given any point $p$ in the image of $\Gamma$
  and local parameter $z$, the local representations $z =\gamma(t)$ of $\Gamma$
  and $Q(z)dz^2$ of $\alpha$ satisfy
  \begin{equation} \label{eq:trajectory_condition}
    Q(\gamma(t)) \cdot {\gamma}'(t)^2 <0.
  \end{equation}
  If $\Gamma:[a,b] \rightarrow \mathbb{C}$ is a continuous
 curve whose restriction to $(a,b)$ is a trajectory, then we will also refer to $\Gamma$ as a
 trajectory.  Similarly for half-open intervals.

 If $\Gamma:(a,b) \rightarrow \partial R$ is a smooth curve, then we say that $\Gamma$ is a
 trajectory if $\alpha$ extends complex analytically to an open neighbourhood of the image of $\Gamma$
 in the double of $R$ and $\Gamma$ is a trajectory in the sense above on the double.
 Similarly for closed or half-open intervals.
 \end{definition}
 It is easily checked
 that this definition is independent of the choice of local parameter.
 We will also refer to the image of $\Gamma$ as a trajectory.

 We say that a quadratic
 differential is admissible for $R$ if, for all but finitely many points $p$
 on the border, there is a boundary parameter in an open neighbourhood of every
 boundary point such that the local expression $Q(z)dz^2$
 satisfies (\ref{eq:trajectory_condition})
 for some parametrization $\gamma(t)$ of a portion of the real axis containing
 $z(p)$.  We will also say that $R$ is admissible for the quadratic differential
 in this case.

 \begin{definition} \label{de:admissible}
  Let $R$ be a bordered Riemann surface and $\alpha$ be a quadratic differential on $R$.
  We say that $\alpha$ is admissible for $R$ if all but finitely many points $p$ on the boundary
  $\partial R$ are in the image of a trajectory $\Gamma:(a,b) \rightarrow \partial R$ of $\alpha$.
 \end{definition}

 \begin{definition}
  Let $\alpha$ be a quadratic differential on a Riemann surface
   $S$ and $f:R \rightarrow S$ be a local biholomorphism.
  The ``pull-back'' of $\alpha$ to $R$ under $f$ is defined by,
  for a local representation $Q(w)dw^2$ of $\alpha$ and $w=f(z)$ of $f$,
  \[  f^*(Q(w)dw^2) = Q(f(z)) f'(z)^2 dz^2.  \]
 \end{definition}
 \begin{remark}
  If $R$ and $S$ are bordered Riemann surfaces, and $\alpha$ extends meromorphically to
  a neighbourhood of $\overline{S}$ in the double of $S$, and $f$ is holomorphic
   on $\partial R$, then the pull-back can be extended to the boundary.
 \end{remark}
 \begin{remark} \label{re:pull-back_trajectory}
 It is immediately evident that if $\gamma$ is a trajectory of $f^*(\alpha)$ if and only if $f \circ \gamma$ is
 a trajectory of $\alpha$, since
 \[  Q(f(\gamma(t)) f'(\gamma(t))^2 \left(\frac{d \gamma(t)}{dt} \right)^2 = Q(f \circ \gamma(t)) \left( \frac{d f \circ \gamma}{dt} \right)^2.
    \]
  This fact extends to trajectories on the boundary when $f$ and $Q$ are sufficiently regular.
 \end{remark}

 Using the Schwarz
 reflection principle it is easy to see that if
 $Q(w)dw^2$ is admissible for $R_2$ then the pull-back under a conformal
 bijection is admissible for $R_1$.  Note
 that this is not necessarily true if $f$ is not a bijection.   A conformal bijection
 of $R_1$ to $R_2$ extends to a conformal bijection of the doubles $S_i$
 of $R_i$, $i=1,2$, and an admissible quadratic differential on $R_i$
 extends uniquely to a quadratic differential on the doubles by reflection.
 Thus, the statement that an admissible quadratic differential has zeros and poles on
 the boundary has a conformally invariant meaning.

 In this paper we will be concerned entirely with simply connected Riemann
 surfaces conformally
 equivalent to the disk and their double covers with finitely many branch points.
 If $R$ is a simply connected bordered surface, then $R$ and its double are
 conformally equivalent to $\disk$ and $\sphere$ respectively, and the boundary can be identified
 with $\partial \disk$.
 Of course $R$ is conformally equivalent to any simply connected
 domain $\Omega$ in the plane which is not $\mathbb{C}$.  In either case we can
 represent the quadratic differential globally on $\Omega$ as $Q(z)dz^2$ for the global parameter
 $z$ on $\mathbb{C}$.  If we choose $\Omega = \mathbb{D}$ then $Q(z)$ extends to a
 rational function on $\sphere$.
 \begin{remark} \label{re:outer_domain_regularity}
 According to the above definitions, it makes sense
 to say that a quadratic differential is admissible for $\Omega$ even when $\partial
 \Omega$ is highly irregular.
 \end{remark}
 \begin{remark}
   If $\Omega$ is represented as a planar domain bounded by a piecewise analytic Jordan
 curve, then a quadratic differential $Q(z)dz^2$ is admissible for $\Omega$ if and only
 if the boundary segments $\gamma(t)$ satisfy $Q(\gamma(t)) \gamma'(t)^2 < 0$
 for the local meromorphic extension of $Q(z)$ across the boundary curve.

 However,
  if the boundary is piecewise analytic but not a Jordan curve (e.g. so that for some $p \in \partial \Omega$ there
  is an open neighbourhood $U$ of $p$ such that $U \cap \Omega$ has two disjoint components)
  then $Q(z)dz^2$ might not have a consistent extension from the two ``sides'' of $U \cap \partial \Omega$.
  However, the two sides of $U \cap \partial \Omega$ correspond to distinct analytic arcs in the double.
  Thus this problem is avoided in our formulation above.  This subtlety is one of the chief reasons
  we require the formalism of borders and doubles in this paper.
 \end{remark}

 Finally we will need the following elementary theorem \cite[Theorem 8.1]{Pommerenkebook}.
 \begin{theorem} \label{th:zero_classification} Let $Q(z)dz^2$ be a quadratic differential on an open connected set $U
 \subseteq \mathbb{C}$.
 \begin{enumerate}
  \item If $Q(z_0) \neq 0$, then there exists a neighbourhood $V$ of $z_0$ in $U$ and a biholomophism
  $\phi:V \rightarrow W \subset\mathbb{C}$ such that for $w=\phi(z)$ we have $Q(z)dz^2 = dw^2$ (that is,
  $\phi'(z)^2 = Q(z)$).
  \item If $Q$ has a zero of order $n >0$ at $z_0$, then there exists a neighbourhood $V$ of $z_0$ in $U$ and a biholomophism
  $\phi:V \rightarrow W \subset\mathbb{C}$ such that for $w=\phi(z)$ we have $Q(z)dz^2 = w^n dw^2$ (that is,
  $\phi'(z)^2 \phi(z)^n = Q(z)$).
 \end{enumerate}
 \end{theorem}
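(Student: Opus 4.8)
The plan is to prove both parts by the classical device of extracting a holomorphic square root of $Q$ and integrating it; only the second part requires any care, and that care is confined to the bookkeeping of branches.

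For part (1) the argument is immediate. Since $Q$ is meromorphic and $Q(z_0)\neq 0$, there is a disk $V$ about $z_0$ on which $Q$ is holomorphic and nowhere zero; as $V$ is simply connected I may fix a holomorphic branch of $\sqrt{Q}$ on $V$ and put $\phi(z)=\int_{z_0}^{z}\sqrt{Q(\zeta)}\,d\zeta$. Then $\phi$ is holomorphic, $\phi'(z)=\sqrt{Q(z)}$ so that $\phi'(z)^2=Q(z)$, and $\phi'(z_0)=\sqrt{Q(z_0)}\neq 0$; the inverse function theorem makes $\phi$ a biholomorphism of a smaller neighbourhood onto its image $W$, and $Q(z)\,dz^2=\phi'(z)^2\,dz^2=dw^2$ with $w=\phi(z)$.

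For part (2), near $z_0$ I write $Q(z)=(z-z_0)^{n}g(z)$ with $g$ holomorphic and $g(z_0)\neq 0$, fix a holomorphic branch $\psi=\sqrt{g}$ on a disk about $z_0$, and expand $\psi(z)=\sum_{k\ge 0}a_k(z-z_0)^k$ with $a_0=\psi(z_0)\neq 0$. The change of variables suggested by the target form $w^{n}\,dw^{2}$ is $\tfrac{2}{n+2}w^{(n+2)/2}=\int_{z_0}^{z}\sqrt{Q}$, so I set $G(z)=\int_{z_0}^{z}\sqrt{Q(\zeta)}\,d\zeta$ and aim to take $\phi(z)=\bigl(\tfrac{n+2}{2}G(z)\bigr)^{2/(n+2)}$. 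The key point is that even though $\sqrt{Q}=(z-z_0)^{n/2}\psi(z)$ has nontrivial monodromy about $z_0$ when $n$ is odd, integrating its series term by term yields
\[
G(z)=\sum_{k\ge 0}\frac{a_k}{k+\tfrac{n}{2}+1}\,(z-z_0)^{k+\frac{n}{2}+1}=(z-z_0)^{(n+2)/2}\,\Theta(z),
\]
where $\Theta(z)=\sum_{k\ge 0}\frac{a_k}{k+\frac{n}{2}+1}(z-z_0)^{k}$ is a genuine single-valued holomorphic function with $\Theta(z_0)=\tfrac{2}{n+2}a_0\neq 0$. Hence $\bigl(\tfrac{n+2}{2}G(z)\bigr)^{2/(n+2)}=(z-z_0)\,\bigl(\tfrac{n+2}{2}\Theta(z)\bigr)^{2/(n+2)}$; the last factor is a nonvanishing holomorphic function near $z_0$ and so admits a holomorphic $2/(n+2)$-th root, whence $\phi$ extends to a single-valued holomorphic function on a full neighbourhood of $z_0$ vanishing there to order exactly one. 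By the inverse function theorem $\phi$ is then a biholomorphism of a neighbourhood of $z_0$ onto its image. Finally, on a punctured neighbourhood (where $\phi\neq 0$ and a branch of $\log\phi$ exists) one differentiates the identity $\phi(z)^{(n+2)/2}=\tfrac{n+2}{2}G(z)$ to get $\phi(z)^{n/2}\phi'(z)=\sqrt{Q(z)}$; squaring gives $\phi'(z)^2\phi(z)^{n}=Q(z)$ off $z_0$, hence on a full neighbourhood by continuity since both sides are holomorphic there, and this is exactly $Q(z)\,dz^2=w^{n}\,dw^2$.

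I expect the one genuinely delicate step --- the ``hard part'' --- to be precisely the claim that $\phi$ is single-valued across $z_0$ when $n$ is odd, i.e.\ that raising $G$ to the power $2/(n+2)$ simultaneously absorbs the branch point of $\sqrt{Q}$ and the half-integer exponent $(n+2)/2$; the displayed factorization of $G$ is designed to make this transparent. Everything else --- choosing branches of square roots and logarithms on simply connected sets, term-by-term integration of a convergent power series, and the inverse function theorem --- is routine. If one prefers to sidestep branches altogether, for even $n=2m$ one has $Q=H^{2}$ with $H$ holomorphic vanishing to order $m$, and $\phi=\bigl((m+1)\int_{z_0}^{z}H\bigr)^{1/(m+1)}$ works directly; the substitution $\zeta=z_0+\tau^{2}$ then reduces the odd case to this situation on a double cover.
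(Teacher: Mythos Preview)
The paper does not prove this theorem; it simply cites it as \cite[Theorem 8.1]{Pommerenkebook}. Your proof is correct and is essentially the standard argument (as found for instance in Strebel's monograph on quadratic differentials). The factorization $G(z)=(z-z_0)^{(n+2)/2}\Theta(z)$ with $\Theta$ holomorphic and nonvanishing is exactly the right observation, and defining $\phi(z)=(z-z_0)\bigl(\tfrac{n+2}{2}\Theta(z)\bigr)^{2/(n+2)}$ directly as a single-valued holomorphic function is the clean way to sidestep the branch issues.

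One small point worth tightening: in the final verification you write ``differentiate the identity $\phi(z)^{(n+2)/2}=\tfrac{n+2}{2}G(z)$'', but for odd $n$ both sides are genuinely two-valued on the punctured disk, so strictly speaking this is a local identity between compatible branches on a simply connected subset. This is harmless since squaring immediately gives the single-valued identity $\phi'(z)^2\phi(z)^n=Q(z)$, but if you want to avoid any mention of branches in the verification you can instead differentiate the manifestly single-valued relation $\phi(z)^{n+2}=\bigl(\tfrac{n+2}{2}\bigr)^2 G(z)^2$ (here $G^2=(z-z_0)^{n+2}\Theta^2$ is holomorphic), square, and divide through by $\phi^{n+2}$ to reach the same conclusion.
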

 It is also possible to classify the poles, but we will not have need of this. We will refer to points where $Q(z_0)$
 has neither a zero nor a pole as regular points.
 \end{subsection}
\begin{subsection}{Simple domains}  In the next section we will define invariants depending on
 pairs of nested simply-connected domains and a quadratic differential.
 In this subsection, we specify the regularity of the inner domain.  In fact, the functionals
 can be extended to much more irregular domains; however, to avoid lengthening this paper needlessly
 we will not pursue this here.
 By conformal invariance, there is no restriction on the regularity of the outer domain
 (see Remark \ref{re:outer_domain_regularity}).

 From now on, a ``conformal disk'' is a bordered Riemann surface conformally equivalent to the disk.
 The reader can replace ``conformal disk'' with $\mathbb{D} = \{z : |z|<1 \}$ everywhere, if she is willing
 to take on faith that the results are conformally invariant.

 \begin{definition} Let $R$ be a conformal disk, with double $\hat{R}$.
  An open connected set $D \subseteq R$ is called ``simple'' if (1) it is simply connected
  (2) there exists a quadratic differential $\alpha$ on the closure of $D$
  in $\hat{R}$ which is admissible for $D$, and (3) this quadratic differential has no poles on $\partial D$.
 \end{definition}
 This class of domains has the following properties: (1) the functionals are easily defined on it
 without introducing analytic difficulties, (2) it includes the extremal domains, and (3) it is
 dense in the set of all simply-connected proper subsets of $\mathbb{C}$ in a certain sense.

 The conformal disk $R$ is itself simple, as can be seen by identifying $R$ and its double with $\disk$ and $\sphere$
 and considering the quadratic differential $dz^2/z^2$ on $\sphere$.
 For a conformal disk $R$, it is clear that a simple domain $D \subseteq R$ is itself a conformal disk.  The
 condition that $D$ be simple imposes a further condition on the regularity of $\partial D$ {\it as it appears in $R$}.
 In principle, one should always say $D$ is simple {\it with respect to $R$}, although
 for brevity we will usually drop this last phrase.

 The boundary of a simple domain $D$ consists of a finite collection of analytic arcs,
 joined at a finite number of ``vertices''.  Let $\mathfrak{V}$ denote the
 set of zeros of a quadratic differential $\alpha$ on $\partial D$ together with those points
 which are an endpoint of only a single trajectory arc.  The latter
 type of point may be zeros or regular points of $\alpha$.  We call this the set of
 vertices of $D$.
 \begin{theorem} \label{th:boundary_arcs_vertices} Let $D$ be a simple domain in a conformal
 disk $R$.   For a quadratic differential $\alpha$ admissible for $D$, the set of vertices
  $\mathfrak{V}=\{v_1,\ldots v_n\}$ on $\partial D$
  is finite.  The complement $\partial D \backslash \mathfrak{V}$ consists of finitely many analytic
  arcs.  Furthermore, at each vertex, there is a neighbourhood which intersects only finitely many of these arcs,
  which meet at the vertex at equally spaced angles.
 \end{theorem}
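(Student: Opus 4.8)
The plan is to work locally near an arbitrary point $p\in\partial D$ using the local normal forms of Theorem \ref{th:zero_classification}, which classify the quadratic differential up to a biholomorphic change of coordinate as $dw^2$ (regular point) or $w^n\,dw^2$ (zero of order $n$). Since $\alpha$ is admissible for $D$ and has no poles on $\partial D$, every point of $\partial D$ is either a regular point or a zero of $\alpha$, so one of these two normal forms applies near each $p$. The first step is to show the set $\mathfrak{V}$ of vertices is finite: by Definition \ref{de:admissible}, all but finitely many boundary points lie on a trajectory arc $\Gamma:(a,b)\to\partial D$; the zeros of $\alpha$ on the closure of $D$ in $\hat R$ form a discrete set, hence are finite in number on the compact analytic curve $\partial D$; and the remaining candidate vertices are the finitely many exceptional points from admissibility. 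Thus $\mathfrak{V}=\{v_1,\dots,v_n\}$ is finite.

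Next I would analyze $\partial D\setminus\mathfrak V$. Take a point $p$ in this set. If $p$ is a regular point, the normal form $Q(z)\,dz^2=dw^2$ with $w=\phi(z)$ shows, via Remark \ref{re:pull-back_trajectory}, that trajectories correspond under $\phi$ to curves $\gamma(t)$ with $(\gamma'(t))^2<0$, i.e. vertical line segments in the $w$-plane; since $p\notin\mathfrak V$, it is an interior point of a trajectory arc, and pulling back the straight vertical segment through $\phi(p)$ shows $\partial D$ coincides near $p$ with an analytic arc. If $p$ is a zero of order $n$ but $p\notin\mathfrak V$, then by the definition of $\mathfrak V$ it is an interior point of a trajectory arc as well; in the normal form $Q(z)\,dz^2=w^n\,dw^2$ the trajectory condition becomes $w(t)^n\,(w'(t))^2<0$, whose solutions through a zero are the $n+2$ rays emanating at equally spaced angles $2\pi/(n+2)$, and exactly two of these rays can combine into a single smooth analytic arc through the zero (the two opposite ones, which happens only for $n$ such that opposite rays exist, i.e. when $n$ is even). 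In any case, away from $\mathfrak V$ the boundary is locally a single analytic arc, so by compactness of $\partial D$ and finiteness of $\mathfrak V$, the complement $\partial D\setminus\mathfrak V$ is a finite union of open analytic arcs.

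Finally, for the behavior at a vertex $v_j$: apply the appropriate normal form centered at $v_j$. If $v_j$ is a zero of order $n$, the trajectory structure in the $w=w^n\,dw^2$ model consists of $n+2$ rays from the origin at equal angles $2\pi/(n+2)$, and the portion of $\partial D$ near $v_j$ is a subcollection of these rays (those that belong to the trajectory network bounding $D$), which by construction meet at $v_j$ at angles that are integer multiples of $2\pi/(n+2)$; pulling back by the biholomorphism $\phi^{-1}$, which has nonzero derivative (the normal form absorbs the vanishing into the $w^n$ factor, and $\phi$ itself is a biholomorphism near $v_j$), preserves angles, so the arcs of $\partial D$ meet at $v_j$ at equally spaced angles. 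If $v_j$ is a regular point that is nonetheless a vertex (endpoint of a single trajectory arc), the $dw^2$ normal form shows only finitely many — here at most the two vertical directions — trajectory arcs can reach it, and the "equally spaced" claim is vacuous or trivial for a single arc. In all cases a sufficiently small neighbourhood of $v_j$ meets only the finitely many arcs incident to $v_j$, because the normal-form models have only finitely many trajectory rays at the origin and the biholomorphism is a local homeomorphism.

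I expect the main obstacle to be the careful bookkeeping at zeros on the boundary: one must verify that the $n+2$ trajectory rays in the normal form restrict correctly to the actual boundary of $D$, that the biholomorphism of Theorem \ref{th:zero_classification} is genuinely conformal (angle-preserving) near the zero so that "equally spaced" transfers from the model to $\partial D$, and — using the double — that the trajectory network condition on $\partial D$ really does force the incident arcs to occupy a sub-collection of these equally spaced rays rather than some more exotic configuration. The rest is a routine compactness-and-discreteness argument combined with transport of the local normal forms through Remark \ref{re:pull-back_trajectory}.
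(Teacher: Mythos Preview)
Your approach is essentially the same as the paper's: finiteness of $\mathfrak{V}$ from compactness of $\partial D$ and discreteness of zeros, and the local trajectory structure from the normal forms of Theorem \ref{th:zero_classification} (part (1) for the analytic arcs away from vertices, part (2) for the equally spaced rays at zeros). One small redundancy: your case ``$p$ is a zero of order $n$ but $p\notin\mathfrak{V}$'' cannot occur, since by definition $\mathfrak{V}$ already contains every zero of $\alpha$ on $\partial D$, so that paragraph can be dropped.
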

 \begin{proof}  The fact that there are finitely many vertices follows from the fact that
 the quadratic differential is meromorphic on the compact closure of $R$,
 the compactness of $\partial D$, and the fact that the quadratic differential
  is not identically zero.  The second claim follows
 from applying part (1) of Theorem \ref{th:zero_classification} in local coordinates,
 after observing that at any regular point the trajectory of $\alpha$ through a regular point in a sufficiently
 small neighbourhood is the image of an interval of the imaginary axis under $\phi^{-1}$.  The final claim
 follows from part (2) of Theorem \ref{th:zero_classification}.
 \end{proof}
 \begin{remark}  It is possible that some of the boundary curves of $D$ are arcs of $\partial R$.  These
  are analytic curves in the double of $R$.  If $R$ is identified with $\disk$, then these are arcs of
  $\partial \disk$.
 \end{remark}

 It is elementary that simple domains are dense in the set of simply connected proper subsets of $\mathbb{C}$
 in the following sense.
 \begin{proposition} \label{th:density}
  Let $R$ be a bordered Riemann surface conformally equivalent to the disk, and
  let $D$ be any simply connected subset of $R$.
  Let $f:\mathbb{D} \rightarrow D$ be a conformal bijection.
  There is a sequence of holomorphic maps $f_n:\mathbb{D} \rightarrow D$
  such that $f_n(\mathbb{D}) \subset f_{n+1}(\mathbb{D})$ is a conformal bijection
  for all $n$, $f_n \rightarrow f$ uniformly
  on compact sets, and each $D_n$ is a simple domain bounded by a single analytic Jordan curve.
 \end{proposition}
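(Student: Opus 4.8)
The plan is to obtain the $D_n$ by dilating $D$: given the conformal bijection $f:\disk\to D$, set $f_r(z)=f(rz)$ and $D_r=f(r\disk)$ for $r\in(0,1)$, and then extract a sequence $r_n\uparrow 1$. Most of the asserted properties will be transparent; the only real point is to verify that every $D_r$ is simple \emph{with respect to} $R$, for which one must exhibit an admissible quadratic differential on the closure of $D_r$ in $\hat R$ having no pole on $\partial D_r$.

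First I would collect the elementary facts about $f_r$. The map $z\mapsto rz$ is a conformal bijection of $\disk$ onto $r\disk\subset\disk$, so $f_r:\disk\to D_r$ is a conformal bijection, $D_r\subseteq D\subseteq R$, and $D_r\subseteq D_{r'}$ whenever $r<r'$. Since $\{\,|z|\le r\,\}$ is a compact subset of $\disk$, $\overline{D_r}=f(\{\,|z|\le r\,\})$ is a compact subset of $D$, hence disjoint from $\partial R$; in particular the closure of $D_r$ in the double $\hat R$ equals $\overline{D_r}$, and $\partial D_r$, viewed as a subset of $R$, is a compact set contained in the interior of $R$. Because $r<1$ we may fix $\varepsilon>0$ with $r+\varepsilon<1$; then $f$ is holomorphic and injective on $\{\,|z|<r+\varepsilon\,\}$, and therefore $f_r$ extends to a conformal bijection of the neighbourhood $\{\,|z|<1+\varepsilon/r\,\}$ of $\cdisk$ onto the neighbourhood $f(\{\,|z|<r+\varepsilon\,\})$ of $\overline{D_r}$. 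Consequently $\partial D_r=f(\{\,|z|=r\,\})$ is a single analytic Jordan curve and $D_r$ is an analytic Jordan domain.

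Next I would produce the quadratic differential witnessing that $D_r$ is simple. On $\disk$ the quadratic differential $dz^2/z^2$ is admissible (in the parameter $z=e^{i\theta}$ it equals $-d\theta^2$, so $\partial\disk$ is itself a trajectory), it is meromorphic on $\sphere$, and its only pole in $\cdisk$ is the double pole at the interior point $0$. Let $\alpha_r$ be the pull-back of $dz^2/z^2$ under the conformal bijection $f_r^{-1}:D_r\to\disk$. By the extension of $f_r^{-1}$ to a neighbourhood of $\overline{D_r}$ noted above, $\alpha_r$ is a (meromorphic) quadratic differential on a neighbourhood of the closure of $D_r$ in $\hat R$; since $f_r^{-1}$ is a conformal bijection its derivative never vanishes, so $\alpha_r$ has no zeros, and its only singularity is the double pole at $f_r(0)=f(0)\in D_r$. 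In particular $\alpha_r$ has no pole on $\partial D_r$. Since the pull-back of an admissible quadratic differential under a conformal bijection of bordered Riemann surfaces is again admissible (applied to $f_r^{-1}:D_r\to\disk$, with $dz^2/z^2$ admissible for $\disk$), $\alpha_r$ is admissible for $D_r$. As $D_r$ is simply connected (being homeomorphic to $\disk$), this exhibits $D_r$ as a simple domain in $R$ bounded by a single analytic Jordan curve.

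Finally I would assemble the sequence by choosing any $r_n\uparrow 1$ (say $r_n=n/(n+1)$) and setting $f_n=f_{r_n}$, $D_n=D_{r_n}$. All the claimed properties except uniform convergence are then immediate from the two preceding paragraphs, and for the convergence: given compact $K\subset\disk$, choose $\rho<1$ with $K\subseteq\{\,|z|\le\rho\,\}$; then $|f_n(z)-f(z)|=|f(r_nz)-f(z)|$ on $K$, and since $|r_nz-z|\le(1-r_n)\rho\to0$ uniformly on $K$ while $f$ is uniformly continuous on the compact set $\{\,|z|\le\rho\,\}$, we get $f_n\to f$ uniformly on $K$. I do not anticipate a serious obstacle; the one step that genuinely uses the hypotheses rather than soft arguments is the verification that $\alpha_r$ extends holomorphically across $\partial D_r$ without acquiring a pole there, which is precisely where the strict inequality $r<1$ (equivalently, that $\overline{D_r}$ lies compactly inside $D$) is needed.
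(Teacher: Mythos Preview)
Your proof is correct and follows essentially the same approach as the paper: dilate via $f_n(z)=f(r_n z)$ with $r_n\uparrow 1$ (the paper takes $r_n=1-1/n$), and witness simplicity of $D_n$ by pulling back $dz^2/z^2$ under $f_n^{-1}$. Your write-up is in fact more careful than the paper's, spelling out the extension of $f_r^{-1}$ across $\partial D_r$ and the uniform convergence, but the underlying idea is identical.
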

 \begin{proof}  Let $f_n(z)=f((1-1/n)z)$, let $Q(z)dz^2 = dz^2/z^2$ and set
 \[  Q_n(z)dz^2 = \frac{(f_n^{-1})'(z)^2 dz^2}{f_n^{-1}(z)^2}  \]
 on $D_n$.
 Clearly $f_n(\partial \mathbb{D})$ is an analytic Jordan curve, and since $f_n$ has
 a bijective holomorphic extension to a neighbourhood of $\overline{\mathbb{D}}$, by Remark \ref{re:pull-back_trajectory}
 $Q_n(z)dz^2$ meet the conditions of Definition \ref{de:admissible}.
 \end{proof}
\end{subsection}
\end{section}
\begin{section}{Conformal invariants associated to quadratic differentials}
\begin{subsection}{Harmonic pairs on double covers adapted to quadratic differentials}
  In order to define the conformal invariants, we will need a covering on which the
 quadratic differential has a single-valued square root.  We first define such covering
 and then show that it has the desired properties.

 \begin{definition} \label{de:adapted_cover}
 Let $D$ be a simple
 domain in a conformal disk $R$, and let $\alpha$ be a quadratic differential admissible for $D$ (at least
 one exists by definition).
 We say that $\pi:\tilde{D} \rightarrow D$ is a cover adapted to $\alpha$ if it is a
 double-sheeted cover of $D$ with a branch point of order two at each odd-order
 zero and pole of $\alpha$.
 \end{definition}
  Recall that there are at most finitely many zeros and poles.
  If there are no poles or zeros of odd order, then $\tilde{D}$ consists of two
  disjoint sheets biholomorphic to $D$.  In that case, any curve in $D$ has a two distinct
  lifts, each lying entirely in one sheet.  If there is at least one odd order pole or
  zero, then a closed
  curve $\gamma$ in $D \backslash \{z_1,\ldots,z_k,p_1,\ldots,p_m \}$ lifts to a closed curve
  in $\tilde{D}$ if
 and only if the sum of the winding numbers of $\gamma$ with respect to the
 points $z_i$, $p_j$ is even.

 The double cover adapted to a quadratic differential is uniquely determined up to
 a conformal map.
 \begin{proposition} \label{pr:cover_uniqueness}  Let $D$ be a simple domain in a conformal disk $R$.
 Let $\tilde{D}_1$ and $\tilde{D}_2$ denote two covers of $D$
 adapted to $\alpha$.  There exists a conformal map $\phi:\tilde{D}_1 \rightarrow \tilde{D}_2$.
 \end{proposition}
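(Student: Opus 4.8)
The plan is to construct the biholomorphism $\phi$ directly from the covering space structure. Both $\tilde D_1$ and $\tilde D_2$ are connected two-sheeted branched covers of $D$ whose branch locus is exactly the finite set $B = \{$odd-order zeros and poles of $\alpha\}$. Away from $B$, each $\pi_i : \tilde D_i \setminus \pi_i^{-1}(B) \to D \setminus B$ is an honest (unbranched) degree-two covering of the same base space $D\setminus B$. So the first step is to appeal to the classification of covering spaces: a connected degree-two cover of $D\setminus B$ corresponds to an index-two subgroup of $\pi_1(D\setminus B)$, equivalently a surjective homomorphism $\pi_1(D\setminus B)\to \mathbb Z/2\mathbb Z$. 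I would argue that both covers correspond to the \emph{same} homomorphism, namely the one sending a loop $\gamma$ to the parity of $\sum_i n(\gamma;b_i)$ (sum of winding numbers about the branch points). This is forced: a loop lifts to a loop in $\tilde D_i$ iff that parity is zero, which is precisely the description given in the paragraph following Definition~\ref{de:adapted_cover}, and this holds verbatim for $i=1$ and $i=2$. (If $B=\varnothing$ the cover is the trivial two-sheeted cover in both cases, and one picks the identity on each sheet — or the sheet-swap — so the statement is immediate; I would dispose of this case first.)

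Given that the two unbranched covers are classified by the same subgroup, standard covering space theory yields a covering isomorphism $\phi_0 : \tilde D_1\setminus \pi_1^{-1}(B) \to \tilde D_2\setminus\pi_2^{-1}(B)$ with $\pi_2\circ\phi_0 = \pi_1$. Because $\pi_1,\pi_2$ are holomorphic local biholomorphisms off the branch locus, $\phi_0$ is automatically holomorphic: locally it is $\pi_2^{-1}\circ\pi_1$ composed with an appropriate sheet choice. The next step is to extend $\phi_0$ holomorphically across the punctures $\pi_1^{-1}(B)$. Near a branch point $\tilde b\in\tilde D_1$ lying over $b\in B$, there is a local coordinate $\zeta$ on $\tilde D_1$ and $z$ on $D$ with $\pi_1(\zeta)=b+\zeta^2$ (in a shifted coordinate); similarly on $\tilde D_2$. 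Then $\phi_0$ maps a punctured disk biholomorphically to a punctured disk and is bounded there, so by the removable singularity theorem it extends holomorphically, and because $\pi_2\circ\phi_0 = \pi_1$ persists by continuity, the extension sends the branch point of $\tilde D_1$ over $b$ to the branch point of $\tilde D_2$ over $b$, with the extension being a local biholomorphism (both sides look like $\zeta\mapsto\zeta^2$ over $z$, so $\phi$ is $\zeta\mapsto \pm\zeta$ to leading order, hence nonvanishing derivative). The same extension argument also handles the boundary $\partial\tilde D_i$, using that $\alpha$ has no poles on $\partial D$ (part of the definition of simple) so the covers are nicely behaved up to the border; one reflects through the double if one wants a clean statement there, but for the proposition as stated — a conformal map $\phi:\tilde D_1\to\tilde D_2$ — it suffices to work on the open surfaces.

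The main obstacle I anticipate is bookkeeping around the branch locus and making the covering-theory step rigorous in the branched setting: one must be careful that $D\setminus B$ is connected and that $\pi_1(D\setminus B)$ genuinely surjects onto $\mathbb Z/2\mathbb Z$ via the winding-number-parity map (this requires $B\neq\varnothing$, and it requires checking the map is a well-defined homomorphism, which is standard since winding number is a homomorphism $\pi_1(D\setminus B)\to\mathbb Z^B$). A secondary point is verifying that the two covers really do induce the same subgroup rather than merely isomorphic-as-abstract-groups subgroups — but this is exactly the content of the ``lifts to a closed curve iff the sum of winding numbers is even'' characterization quoted above, which pins down the subgroup uniquely, so once that characterization is invoked for each cover the subgroups coincide on the nose. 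Everything else — holomorphicity of $\phi_0$, removable singularities at branch points — is routine. One should also remark that $\phi$ is unique up to the deck transformation (the sheet-swap) of $\tilde D_2$, which is the expected ambiguity and matches the two local choices $\zeta\mapsto\pm\zeta$.
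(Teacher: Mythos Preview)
Your proposal is correct and follows essentially the same approach as the paper: both dispose of the unbranched case first, then use the characterization ``a loop lifts to a closed curve iff the total winding number about the odd-order zeros and poles is even'' to conclude that the two covers correspond to the same subgroup of $\pi_1(D\setminus B)$, and hence are isomorphic as covers. The only cosmetic difference is packaging: the paper lifts $\pi_1:\tilde D_1\to D$ along $\pi_2$ to obtain $\hat\pi_1:\tilde D_1\to\tilde D_2$ and then produces the inverse by the same construction and uniqueness of lifts, whereas you invoke the classification theorem directly on the punctured surfaces and then extend across the branch points by removable singularities; both routes rest on the same key observation and neither introduces a genuinely new idea.
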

 \begin{proof}
  If there are no poles or zeros of odd order, then $\tilde{D}_1$ and $\tilde{D}_2$ both consist of two disjoint
  sheets each of which is biholomorphic to $D$, and the claim follows immediately.

  Now assume that there is at least one zero or pole of odd order.
  Fix $z_0 \in D$ and let $p_i$ and $q_i$ denote the two preimages of
  $z_0$ under $\pi_i$, $i=1,2$.  The map $\pi_1:\tilde{D}_1 \rightarrow D$ has a single-valued lift to a map
  $\hat{\pi}_1: \tilde{D}_1 \rightarrow \tilde{D}_2$ such that $\hat{\pi}_1(p_1)=p_2$ (by lift, we mean that, $\pi_2 \circ \hat{\pi}_1 = \pi_1$).
  To see this it is enough to show that $\pi$ induces a map from the fundamental group of the covering
  $\tilde{D}_1$ into that of $\tilde{D}_2$.
  To this end observe that
  every non-trivial element of the fundamental group at $z_0$ of the covering $\tilde{D}_1$
  can be represented as a lift of a closed curve $\gamma$ such that the sum of the winding numbers around the odd zeros and poles of $Q$ is even.  However, this is
  precisely the condition that $\gamma$ be a representative of the covering group
  of $\tilde{D}_2$.  This proves the claim.
  Similarly, $\pi_2$ has a lift to a map $\hat{\pi}_2:\tilde{D}_2 \rightarrow \tilde{D}_1$ such that $\hat{\pi}_2(p_2)=p_1$.  Both maps $\hat{\pi}_i$
  are holomorphic.

  Since $\pi_2 \circ \hat{\pi}_1 = \pi_1$, we have that $\pi_2 \circ \left( \hat{\pi}_1 \circ \hat{\pi}_2 \right) = \pi_1 \circ \hat{\pi}_2 = \pi_2$.
  Thus $\hat{\pi}_1 \circ \hat{\pi}_2$ is a lift of the identity; since $\hat{\pi}_1 \circ \hat{\pi}_2(p_2)=p_2$ by uniqueness of lifts it must be the identity.  Similarly $\hat{\pi}_2 \circ \hat{\pi}_1$ is the identity.  Setting $\phi=\hat{\pi}_1$ we have proven the proposition.
 \end{proof}

%
 The condition that a quadratic differential be admissible for $D$ implies that
 the primitive of its square root on the double cover has a single-valued real part, at least in
  a doubly-connected domain near the boundary $\partial D$.
  The next proposition formulates this precisely.

  Given a quadratic differential $\alpha$ admissible for $D$ and a double
  cover $\pi:\tilde{D} \rightarrow D$ adapated to $\alpha$, since $\pi$ is a local biholomorphism away from branch points,
  $\pi^*\alpha$ is a well-defined quadratic differential on $R$ minus the branch points.
  It is easy to see that $\pi^*\alpha$ remains bounded at
  branches and therefore extends to a quadratic differential on $\tilde{D}$.
 \begin{theorem} \label{th:primitive_is_constant} Let $D$ be a conformal disk and let $\alpha$ be an admissible
 quadratic differential for $D$.
  Let $\tilde{D}$ be a double-cover of $D$ adapted to $\alpha$.
   Let $F:\disk \rightarrow D$ be a conformal bijection. Let $\mathfrak{B}$ be the set of branch points in $\tilde{D}$
    of the cover $\pi:\tilde{D} \rightarrow D$.
   \begin{enumerate}
   \item
    There is a well-defined meromorphic one-form $\beta$ on $\tilde{D}$
     such that $\beta^2 = \pi^*(\alpha)$.
  \item   The one-form
     $\alpha$ has a multi-valued holomorphic primitive $x$ on $\tilde{D} \backslash \mathfrak{B}$.
      For some $0<r<1$, $x$ has
      a single-valued real part on $\pi^{-1} F(r<|z|<1)$.
       The real part of the primitive $x$ extends to a well-defined harmonic function on any
  domain of the form $D \backslash \Omega$ where $\Omega$ is a simply connected
  domain, containing the odd order poles of $\alpha$ whose closure
  is in $D$.,
  \item $q = \text{Re}(x)$ extends continuously to a constant function on $\partial \tilde{D}$.
  \end{enumerate}
 \end{theorem}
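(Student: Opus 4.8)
The three assertions will be proved in turn; the real content is in (1), where one must produce a \emph{global} square root of $\pi^*\alpha$ on $\tilde{D}$, after which (2) and (3) fall out of the trajectory condition together with standard facts about primitives of holomorphic one-forms. For (1) I would construct $\beta$ explicitly as a ``pulled-back formal square root''. Working in a planar model of $D$ (e.g.\ $D\cong\disk$ via a conformal bijection), write $\alpha=Q(z)\,dz^2$ with $Q$ meromorphic on a neighbourhood of $\overline{D}$ in the double. Let $p_1,\dots,p_b$ be the zeros and poles of $Q$ of \emph{odd} order $n_1,\dots,n_b$, put $g(z)=\prod_i (z-p_i)$, and set $R(z)=Q(z)\big/\prod_i (z-p_i)^{n_i}$. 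Then $R$ has neither a zero nor a pole at any $p_i$, and all of its remaining zeros and poles are of even order, so by simple connectedness of $D$ it has a single-valued meromorphic square root $\sqrt{R}$ on (a neighbourhood of) $D$; and $\prod_i (z-p_i)^{(n_i-1)/2}$ is a rational function, hence single-valued. Thus in the formal identity $\sqrt{Q}=\prod_i (z-p_i)^{(n_i-1)/2}\,\sqrt{g}\,\sqrt{R}$ the only multivalued factor is $\sqrt{g}=\sqrt{\prod_i (z-p_i)}$, which is single-valued precisely on the double cover of $D$ branched at $p_1,\dots,p_b$ --- i.e.\ branched exactly at the odd-order zeros and poles of $\alpha$. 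By Proposition \ref{pr:cover_uniqueness} that cover is conformally $\tilde{D}$, so on $\tilde{D}$ all three factors become single-valued, and I would define $\beta$ to be $\pi^*$ of this formal product. Then $\beta^2=\pi^*\alpha$ by construction, and $\beta$ is meromorphic on all of $\tilde{D}$: near a branch point over an order-$n$ critical point of $\alpha$ one has, in a local parameter $w$ with $\pi(w)\sim w^2$, that $\pi^*\alpha\sim w^{2n+2}\,dw^2$ and $\beta\sim w^{n+1}\,dw$, so in particular $\beta$ never has a simple pole at a branch point. (If $b=0$ then $\tilde{D}$ is two disjoint copies of $D$, $\alpha$ already has a square root $\sqrt{Q}\,dz$ on $D$ by simple connectedness, and $\beta$ is this on each sheet.)

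For (2), away from $\mathfrak{B}$ and the finitely many poles of $\beta$ (which, by the local computation above, lie over the double poles of $\alpha$) the form $\beta$ is holomorphic hence closed, so local primitives exist and patch to a multivalued primitive $x$; its ambiguity is generated by the periods $\oint_\gamma\beta$ over loops $\gamma$ in $\tilde{D}$ together with $2\pi i$ times the residues of $\beta$ at its simple poles. To see that $\text{Re}(x)$ is single-valued near $\partial\tilde{D}$, pick $r<1$ so that $F(r<|z|<1)$ is free of zeros, poles and branch points of $\alpha$ (possible since these form a finite set inside or on the boundary of $D$); then $\pi^{-1}F(r<|z|<1)$ is a union of annuli on which $\beta$ is holomorphic and nonvanishing, and the only obstruction to single-valuedness of $x$ there is the period of $\beta$ over each core circle. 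Each such core circle is freely homotopic within that region to the boundary component of $\tilde{D}$ it encircles; by Theorem \ref{th:boundary_arcs_vertices} and Remark \ref{re:pull-back_trajectory} that boundary component is a finite union of trajectory arcs of $\pi^*\alpha=\beta^2$, and along a trajectory arc parametrised by $\gamma$ the quantity $\beta(\gamma)\,\gamma'$ is purely imaginary (its square is $\pi^*\alpha(\gamma)\,\gamma'^2<0$), so each arc --- and hence the whole period --- contributes a purely imaginary number. Therefore $\text{Re}(x)$ has trivial period around each core circle and is single-valued on $\pi^{-1}F(r<|z|<1)$. For the harmonicity statement one deletes a simply connected $\Omega$ with $\overline{\Omega}\subset D$ large enough to contain the relevant poles (and, one may as well arrange, all the branch points); over $D\backslash\Omega$ the cover is then unbranched and of genus zero, every loop in its preimage is homologous to core circles near $\partial\tilde{D}$ or near the lift of $\partial\Omega$ (around which $\beta$ is holomorphic with vanishing residue), and the same computation gives vanishing real part of every period, so $q=\text{Re}(x)$ is a well-defined harmonic function on (the preimage of) $D\backslash\Omega$.

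Finally, for (3): since $\alpha$ is admissible it extends meromorphically past each boundary trajectory arc into the double, so $\beta$ extends holomorphically across $\partial\tilde{D}$ away from the finitely many vertices, and stays bounded at the vertices by the local normal forms of Theorem \ref{th:zero_classification}; hence $q$ extends continuously to all of $\partial\tilde{D}$. Along each boundary trajectory arc $dq=\text{Re}(\beta\ \text{restricted})=0$ because $\beta$ restricted to a trajectory is purely imaginary, so $q$ is locally constant on $\partial\tilde{D}$ away from the vertices, and by continuity and connectedness it is constant on each component of $\partial\tilde{D}$. The main obstacle in the whole argument is exactly the existence of the global square root in step (1): it is not automatic, because $\tilde{D}$ need not be simply connected (indeed it acquires positive genus once $\alpha$ has three or more odd-order critical points), so a naive appeal to triviality of a $\mathbb{Z}/2$ obstruction class fails. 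The device that makes it work is to avoid any cohomological obstruction altogether by isolating the multivaluedness in the single factor $\sqrt{\prod(z-p_i)}$ and then invoking the uniqueness of the adapted cover (Proposition \ref{pr:cover_uniqueness}) to identify $\tilde{D}$ with the very cover on which that factor is single-valued. A secondary point requiring care is the homological bookkeeping in step (2) --- ensuring that every period of $\beta$ one must control can genuinely be pushed out to $\partial\tilde{D}$, where the trajectory condition forces it to be purely imaginary.
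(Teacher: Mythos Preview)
Your argument is correct. For parts (2) and (3) it coincides with the paper's: both use that along a boundary trajectory $\beta$ evaluates to something purely imaginary (its square being negative by the trajectory condition), so the periods of $\beta$ near $\partial\tilde{D}$ are purely imaginary and $q=\text{Re}(x)$ is single-valued there and locally constant on the boundary.

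For part (1), however, you take a genuinely different route. The paper argues by analytic continuation: it fixes a local square root of $\pi^*\alpha$ away from branch points and tracks the sign change under continuation along an arbitrary closed loop $\Gamma\subset\tilde{D}\setminus\mathfrak{B}$, observing that this sign is $(-1)^k$ where $k$ is the total winding number of $\pi\circ\Gamma$ about the odd-order zeros and poles of $\alpha$ --- and $k$ is even precisely because $\Gamma$ closes up on the adapted cover. It then checks meromorphy at each branch point separately via the local model $w\mapsto w^2$ for $\pi$. Your approach instead works in a planar model and factors $Q$ explicitly as $\prod_i(z-p_i)^{n_i-1}\cdot g\cdot R$ with $R$ having only even-order zeros and poles (hence a global meromorphic square root on the simply connected $D$), so that all the multivaluedness of $\sqrt{Q}$ is carried by the single factor $\sqrt{g}=\sqrt{\prod_i(z-p_i)}$; you then invoke Proposition~\ref{pr:cover_uniqueness} to identify $\tilde{D}$ with the concrete cover on which $\sqrt{g}$ is tautologically single-valued. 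Your construction is more explicit and sidesteps the monodromy bookkeeping entirely, at the price of committing to a global coordinate on $D$; the paper's continuation argument is coordinate-free and would transport more directly to the multiply-connected and higher-genus generalizations alluded to in the introduction.
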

 \begin{remark} \label{re:double_cover_has_border}  If $D$ is a conformal disk (in fact
 any bordered Riemann surface),
  it is easily seen that if $\tilde{D}$ is a double cover of $D$ all of
  whose branch points are in the interior, then $\tilde{D}$ is also a bordered Riemann
  surface.  For any border chart $\phi$ in a neighbourhood of $p \in \tilde{D}$,
  $\phi \circ \pi$ is a border chart near each of the two points in
  $\pi^{-1}(p)$ for locally biholomorphic choices of $\pi^{-1}$.   We will use this in the following
  proof.
 \end{remark}
 \begin{proof} Using pull-backs, we may assume that $D$ is the unit disk $\disk$ and
  therefore $\alpha = Q(z)dz^2$ in terms of the global parameter $z$.  (In fact, by admissibility
  $Q(z)$ must be a rational function on $\sphere$).

  To prove (1), first we observe that at any $p \in \tilde{D} \backslash \mathfrak{B}$,
  there are two square roots of $\pi^*Q(z)dz^2$ defined in a neighbourhood of $p$
  as follows.  Assume first that $\pi(p)$ is not a pole or zero of $Q(z)dz^2$.
  If $\zeta$ is a local holomorphic coordinate in a neighbourhood of $p$
  and $\pi^* Q(z)dz^2 = h(\zeta) d\zeta^2$ we can locally set $\beta = \sqrt{h(\zeta)} d\zeta$,
  and there are precisely two choices of $\sqrt{h(\zeta)}$.   Using the
  transformation property of quadratic differentials, it is easily verified that
  this pair of locally defined one forms is independent of the choice of local
  coordinates.  If now $\pi(p)$ is a zero or pole of $Q(z)dz^2$, then it must
  have even order, since $p \notin \mathfrak{B}$.  Since $\pi$ is a covering
  (and therefore has non-zero derivative in local coordinates), $p$ is also a
  zero or pole of $\pi^*Q(z)dz^2$ of the same order.  In local
  coordinates $\zeta$ we have that $\pi^* Q(z)dz^2 = (\zeta - \zeta(p))^{2n} H(\zeta) d\zeta^2$
  for some non-vanishing holomorphic function $H$ and we again have two square roots
  $(\zeta - \zeta(p))^n \sqrt{H(\zeta)}$ in a neighbourhood of $p$.  Note
  that on a sufficiently small
  open set $U$ containing $p$,  the locally defined one-form $\beta$ is $\pi^*(\delta)$ for some
   one-form $\delta$ on $\pi(U)$.

  Now assume that $\gamma:[0,1] \rightarrow D$ is a closed curve winding once around a odd
  order zero or pole, say $p$, but winding around no other odd order zero or pole.  The lift of $\gamma$ is such that
   $\gamma(0)$ and $\gamma(1)$ lie on different sheets.
   If we continue a choice of square root of $\pi^*(Q(z)dz^2)$
  along $\gamma$, the corresponding one-form $\delta$ defined on a neighbourhood of
  $\gamma([0,1])$ on $D$ picks up a sign change each time it winds around $p$.  Thus in general the sign
  change of the continuation of $\delta$ along any closed curve $\gamma$ is $(-1)^k$
  where $k$ is the sum of the winding numbers of $\gamma$ around the odd order
  zeros and poles.  The sign change of the corresponding continuation $\beta = \pi^*\delta$
  is the same.

  Now fixing a point $p_0 \in \tilde{D} \backslash \mathfrak{B}$ we make a choice of
  square root in a neighbourhood, and analytically continue it to $\tilde{D} \backslash
  \mathfrak{B}$.  It needs to be shown that this continuation is single valued.
  Let $\Gamma$ be a closed curve in $\tilde{D}$ through $p_0$.  Since the sum of
  the winding numbers of $\pi \circ \Gamma$ around odd order zeros and poles is
  even, the continuation of $\beta$ along $\Gamma$ is single-valued
  by the previous paragraph.  Thus $\beta$ is well-defined
  on $\tilde{D} \backslash \mathfrak{B}$.

   To see that $\beta$ extends
  meromorphically to a branch point $b \in \tilde{D}$, observe that there is a local coordinate $z$ say
  in a neighbourhood of $b$, and a local coordinate $w$ in a neighbourhood of $\pi(b)$,
   in which $\pi$ has the form $w=z^2$ (with $w=0$ and $z=0$ corresponding to $\pi(b)$ and
   $b$ respectively).  If $\alpha$ in local coordinates
   has the form $Q(w)dw^2$ then $\pi^*\alpha$ has the form $4 Q(z^2) z^2 dz^2$, so
   $\pi^*\alpha = z^{2m} h(z)dz^2$ for some integer $m$ and holomorphic $h$ such that
   $h(0) \neq 0$.  Thus $\pi^* \alpha$ has a well-defined square root $z^m \sqrt{h(z)}dz$ in a neighbourhood of
   $b$, and this must agree with $\beta$ on the punctured neighbourhood.   This proves (1).

  We prove (2) and (3) simultaneously.  Let $r$ be large enough that $\pi^{-1}(F(r<|z|<1))$
  contains no zeros and poles of $Q(z)dz^2$.  On this domain define the (generally multi-valued)
  analytic function
  \[    x(\zeta)  = \int_{\zeta_0}^\zeta \beta.          \]
  By Remark \ref{re:double_cover_has_border}, $\pi^*(Q(z)dz^2)$ continues analytically to $\partial \tilde{D}$,
  and hence so do $\beta$ and $x$.

  We now show that $\text{Re} x$ is constant on $\partial \tilde{D}$.
  Let $\gamma(t)$ parameterize a portion of the boundary curve
  of $\partial \tilde{D}$.  We have that
  \[  Q(\pi \circ \gamma(t)) \left( \frac{d (\pi \circ  \gamma)}{dt}(t) \right)^2 \leq 0 \]
  and thus for either choice of square root of $Q$ in a neighbourhood of $\gamma(t)$ we have
  \[   \mbox{Re} \left(\sqrt{Q(\pi \circ \gamma(t))} \cdot \frac{d (\pi \circ \gamma)}{dt}(t)
  \right) = 0 \]
  and therefore $\pi^*(Q(z)dz^2)$ has the same property with respect to $\gamma$.
  Thus the one-form $\beta$ evaluated in a direction tangent to $\partial \tilde{D}$
  is pure imaginary.  In particular, $\text{Re}(x)$ is constant on $\partial \tilde{D}$.

  It is clear that $x$ extends analytically to any domain of the specified type, although
  of course it might be multi-valued.  It remains to prove that the real part of
  $x$ is single valued.  Now $\beta$
  (which equals $x'(\zeta) d\zeta$ in coordinates $\zeta$)
  is a single-valued holomorphic one-form on the doubly-connected
  domain $\pi^{-1}(F(r<|z|<1))$, and by the previous paragraph
  the period of $\beta$ on this domain is pure imaginary.  This proves that
  $\text{Re}(x)$ is single valued on this domain.
 \end{proof}
 \begin{remark}  It is immediately seen that $\beta$ extends to a meromorphic differential
  on the double of $\tilde{D}$.
 \end{remark}

 Let $\partial$ denote the differential operator given in local coordinates by
 \[  \partial h = \frac{\partial h}{\partial z} dz.   \]
  In the future we will sometimes denote the one-form $\beta$ by $\partial x$ or
  locally by $x'(\zeta) d\zeta$.  Note that because $\text{Re}(x)$
  may have singularities, the maximum principle cannot be applied so it need not be constant on $D$.
  Clearly $x$ is determined uniquely up to an additive
 constant.

  In the literature, $x \circ \pi^{-1}$ is called
  the canonical or straightening map of $Q(z)dz^2$.  The terms can
  refer to either a single-valued
  choice of $x \circ \pi^{-1}$ on the domain $D$ minus branch cuts or the multi-valued
  function.
 In the proof of Theorem \ref{th:primitive_is_constant} it appeared that
 if a curve is a trajectory of a quadratic differential  then the corresponding function $\text{Re} (x)$ is constant
 on its lift.  The converse is also true.  In a local parameter $z$, observe that $x$ and $Q(z)dz^2$ are related by
 \begin{equation} \label{eq:Q_and_x_relation}
  Q(z)dz^2 = \left( \frac{\partial}{\partial z} x \circ \pi^{-1}(z) \right)^2 dz^2.
 \end{equation}
 If $\alpha$ denotes the quadratic differential, then we can globally write
  $\alpha = \left( \partial (x \circ \pi^{-1}) \right)^2$.
 \begin{proposition} \label{pr:trajectories_and_constants}  Let $D$ be a conformal disk and $\alpha$ be a quadratic differential
  admissible for $D$.  Let $\tilde{D}$ be a double cover adapated to $\alpha$ with branch points $\mathfrak{B} \subseteq \tilde{D}$
   and let $x$ be the multi-valued meromorphic function on $\tilde{D} \backslash \mathfrak{B}$  of Theorem \ref{th:primitive_is_constant};
   that is $\alpha = (\partial (x \circ \pi^{-1}))^2$.
   A curve $\gamma(t)$ is a trajectory of $\alpha$ if and only if $\text{Re}(x)$ is constant
   on $\pi^{-1} \circ \gamma$ for any local choice of $\pi^{-1}$.
 \end{proposition}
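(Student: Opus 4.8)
The plan is to reduce the equivalence to the pointwise identity \eqref{eq:Q_and_x_relation} together with the elementary observation that a complex number $w$ satisfies $\mathrm{Re}(w)=0$ exactly when $w^{2}\le 0$. Both the assertion and \eqref{eq:Q_and_x_relation} are local in the curve parameter $t$, so it is enough to examine $\mathrm{Re}(x)$ on a neighbourhood of each parameter value, working in an arbitrary local parameter $z$ near the corresponding point of $D$ and with the multi-valued primitive $x$ of Theorem \ref{th:primitive_is_constant}.

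Here is the computational heart. On any parameter interval $J$ over which $\gamma$ meets no zero, pole, or branch point of $\alpha$, there is a biholomorphic branch of $\pi^{-1}$ and, after shrinking $J$, a single-valued holomorphic branch of $x$; put $g(t)=x\circ\pi^{-1}\circ\gamma(t)$. By the chain rule and \eqref{eq:Q_and_x_relation},
\[
  g'(t)^{2}=\Bigl(\tfrac{\partial}{\partial z}(x\circ\pi^{-1})(\gamma(t))\Bigr)^{2}\gamma'(t)^{2}=Q(\gamma(t))\,\gamma'(t)^{2}.
\]
Since $\tfrac{d}{dt}\mathrm{Re}(g)=\mathrm{Re}(g')$, constancy of $\mathrm{Re}(g)$ on $J$ is equivalent to $\mathrm{Re}(g'(t))\equiv 0$, hence to $g'(t)^{2}\le 0$, hence to $Q(\gamma(t))\gamma'(t)^{2}\le 0$ on $J$. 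Passing to the other sheet of $\pi$ replaces $g$ by $-g$ plus a constant and changes nothing, which is why the statement is independent of the local choice of $\pi^{-1}$.

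Granting this, the forward implication is immediate: if $\gamma$ is a trajectory then $Q(\gamma(t))\gamma'(t)^{2}<0$ for all $t$, so $\gamma$ avoids the zeros and poles of $\alpha$, hence also the branch set $\mathfrak B$ (which lies over the odd-order zeros and poles); the preceding computation then applies on all of $(a,b)$ and shows $\mathrm{Re}(x)$ is locally constant, hence constant, along the connected lift $\pi^{-1}\circ\gamma$. For the converse, assume $\mathrm{Re}(x)$ is constant along $\pi^{-1}\circ\gamma$. Inspecting the local normal forms of the primitive near poles of $\alpha$ and branch points over them, $\mathrm{Re}(x)$ cannot extend to a constant through such a point except possibly along isolated directions; and near a zero, the level set of $\mathrm{Re}(x)$ through the critical value is exactly the union of critical trajectory arcs described by Theorem \ref{th:zero_classification}. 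Consequently $\gamma$ can meet the zeros and poles of $\alpha$ only at a discrete set of parameter values, and off that set the preceding computation gives $Q(\gamma(t))\gamma'(t)^{2}\le 0$; since there $\gamma(t)$ is a regular point of $\alpha$ and $\gamma'(t)\ne 0$ (the curve being regular), the inequality is in fact strict, so $\gamma$ is a trajectory on the complement of that discrete set.

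The one genuine obstacle is precisely this discrepancy between the non-strict inequality produced by constancy of $\mathrm{Re}(x)$ and the strict inequality in the definition of a trajectory: strictness can fail at the (necessarily isolated) parameters where $\gamma$ passes through a zero or pole of $\alpha$. For the curves relevant to this paper the point does not arise — a boundary arc of a simple domain avoids the poles of $\alpha$ by the definition of a simple domain, and the open arcs between consecutive vertices avoid the zeros by Theorem \ref{th:boundary_arcs_vertices} — so one reads the proposition for such curves (equivalently, for regular curves avoiding the zeros and poles of $\alpha$), where the two conditions are strictly equivalent, and in the general case $\gamma$ is then a concatenation of genuine trajectory arcs.
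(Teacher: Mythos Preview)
Your argument is essentially the paper's: both directions are reduced to the chain-rule identity $g'(t)^2 = Q(\gamma(t))\gamma'(t)^2$ coming from \eqref{eq:Q_and_x_relation}, together with the elementary fact that $\mathrm{Re}(w)=0$ exactly when $w^2\le 0$. The paper defers the forward implication to the computation already carried out in the proof of Theorem~\ref{th:primitive_is_constant} and, for the converse, writes out the same chain-rule line you do, concluding with $Q(\Gamma(t))\,\Gamma'(t)^2 \le 0$ and stopping there. You are actually more careful than the paper: you flag the gap between the non-strict inequality this yields and the strict inequality in Definition~\eqref{eq:trajectory_condition}, and you close it by observing that away from the (isolated) zeros and poles of $\alpha$ the factor $Q(\gamma(t))$ is nonzero and $\gamma'(t)\ne 0$, so strictness follows; you also note that for the boundary arcs of simple domains (the only curves the paper ever applies the proposition to) the issue never arises, by Theorem~\ref{th:boundary_arcs_vertices}. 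The paper leaves this $\le$ versus $<$ point implicit.
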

 \begin{proof}  By conformal invariance we can assume that $D$ is the disk $\disk$, and
 thus we have a global parameter $z$ with $\alpha = Q(z)dz^2$.
  The first claim follows directly from the proof of Proposition \ref{pr:trajectories_and_constants}.
  On the other hand, if $\text{Re}(x)$ is constant on $\gamma(t)$ then for some local
  determination of $\pi^{-1}$, setting $\Gamma = \pi \circ \gamma$ we have
  \begin{align*}
    0 & = \frac{d}{dt} \text{Re} \left[x(\gamma(t)) \right] =
     \frac{d}{dt} \text{Re} \left[ x \circ \pi^{-1} \circ \Gamma(t) \right]
    \\
    & = \text{Re} \left[ \frac{\partial x \circ \pi^{-1}}{\partial z} \circ \Gamma \cdot
    \frac{d \Gamma}{dt} \right]
  \end{align*}
  Thus
  \[  Q(\Gamma(t)) \frac{ d\Gamma}{dt}^2 = \left[ \frac{\partial x \circ \pi^{-1}}{\partial z} \circ \Gamma \cdot
    \frac{d \Gamma}{dt} \right]^2  \leq 0.  \]
 \end{proof}

 Let $D_1$ and $D_2$
 be simple domains such that $D_2 \subseteq D_1$, and let  $Q(z)dz^2$ be an
 admissible quadratic differential for $D_1$.  Assume that all poles of $Q(z)dz^2$
 which are contained in the interior of $D_1$ are also contained in the interior of $D_2$.
 Let $\pi:\tilde{D}_1 \rightarrow D_1$ be a double cover of $D_1$ adapted to $Q(z)dz^2$.
  Let $x_1$ be the function guaranteed by Theorem \ref{th:primitive_is_constant}
  and let $q_1(z) = \mbox{Re}(x(z))$.  Assume that the additive constant of $x_1$ is chosen
  so that $q_1=0$ on $\partial \tilde{D}_1$.
 \begin{definition} \label{de:induced_harmonic}  Let $D_1$ be a conformal disk and let $D_2$ be a simple
 subdomain of $D_1$.  Let $\alpha$
  be a quadratic differential admissible for $D_1$ all of whose poles are in $D_2$.  Let $\tilde{D}_1$
  be a double cover adapted to $\alpha$ and  $\tilde{D}_2 = \pi^{-1}(D_2)$.  Let $x_1$ be one of
  the primitives of the square root of $\alpha$ on $\tilde{D}_1$ and set $q_1 = \text{Re}(x_1)$,
  with the additive constant chosen so that $q_1 = 0$ on $\partial \tilde{D}_1$.
   Let $u$ be the unique harmonic function on $\tilde{D}_2$ such that $u=q_1$ on $\partial \tilde{D}_2$
  and set $q_2 = q_1 - u$.
  \begin{enumerate}
    \item  We call $\pm(q_1,q_2)$ the harmonic pair induced by $(D_1,D_2,Q(z)dz^2)$.
    \item We call $\alpha_2 = \left( \partial (x_2 \circ \pi^{-1}) \right)^2$ the
    quadratic differential on $D_2$ induced by $\alpha$.  In a local coordinate $z$, we have
    that $\alpha_2$ can be written
    $Q_2(z)dz^2 = 4 \left(\frac{\partial q_2 \circ \pi^{-1}(z)}{\partial z} \right)^2dz^2$.
  \end{enumerate}
 \end{definition}
 \begin{remark}[convention for disconnected cover] \label{re:harmonic_pair_convention} In the case that $Q(z)dz^2$ has no double poles
  or zeros, the double cover $\tilde{D}_1$ has two connected components, as observed above.  In
  this case, we adopt the following convention.  The primitive $x_1$ is chosen so that for any
  fixed point $z \in D_1$, the two values of $q_1 = \mathrm{Re} x_1$ at $\pi^{-1}(z)$ differ by a sign.
  With this restriction, there are two (rather than four) possible choices of harmonic pair $\pm (q_1,q_2)$ on $\tilde{D}_1$, in agreement with the case that $\tilde{D}_1$ is connected.
 \end{remark}
 \begin{remark}  Applying Theorem \ref{th:primitive_is_constant}, we see that
  $q_2$ is single-valued near $\partial \tilde{D}_2$ in the sense of the Proposition.
  The function $u$ is single-valued and non-singular on $\tilde{D}_2$.
 \end{remark}

 Of course it must be verified that
 the induced harmonic pair and quadratic differential are well-defined.
 Observe that $q_2$ is uniquely determined by $x_1$ and $D_2$.  Furthermore, $x_1$ (and hence $q_1)$ is determined up
 to a sign; that is given one such function $q_1$, $\pm q_1$ are the only two functions satisfying the definition,
 and furthermore
 for the unique non-trivial deck transformation $g:\tilde{D}_1 \rightarrow \tilde{D}_1$
 we have that $q_1(g(z))=-q_1(z)$.  Clearly if $q_2$ is the harmonic function on $\tilde{D}_2$
 associated with $q_1$ as in Definition \ref{de:induced_harmonic}, then $-q_2$ is the harmonic
 function on $\tilde{D}_2$ associated with $-q_1$.  Thus $(q_1,q_2)$ and $(-q_1,-q_2)$ are the
 only pairs satisfying Definition \ref{de:induced_harmonic}.
 Thus the harmonic pair $\pm(q_1,q_2)$ is well-defined.

 Furthermore we have the following.
 \begin{proposition}  \label{pr:induced_qd_wd_and_admissible}
  Let $D_1$ be a conformal disk and $D_2$ be simple domain in $D_1$.
  Let $\alpha_1$ be an admissible quadratic differential for $D_1$.  Assume that all
  poles of $\alpha_1$ are contained in the interior of $D_2$.
  \begin{enumerate}
  \item
  The quadratic differential
  $\alpha_2$ on $D_2$ induced by $\alpha_1$ is well-defined.  Furthermore, $\alpha_2$
  is admissible for $D_2$.
  \item If $\pm(q_1,q_2)$ is the harmonic pair associated with $(D_1,D_2,\alpha_1)$ then
   $q_2$ is the singular harmonic function on $\tilde{D}_2$ associated to $\alpha_2$ as in Theorem
   \ref{th:primitive_is_constant}.
 \end{enumerate}
 \end{proposition}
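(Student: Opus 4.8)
The plan is to reduce to the planar case $D_1=\disk$ and then prove everything upstairs on the double cover $\tilde{D}_2=\pi^{-1}(D_2)$, where the assertion becomes a statement about a single honest meromorphic one-form. By conformal invariance (Theorem \ref{th:conformal_invariance}) I may assume $D_1=\disk$, so that $\alpha_1=Q(z)\,dz^2$ with $Q$ rational and $z$ a global coordinate on $D_1\supseteq D_2$. Let $g$ be the nontrivial deck transformation of $\pi:\tilde{D}_1\to D_1$. Recall from the discussion following Definition \ref{de:induced_harmonic} that $q_1\circ g=-q_1$; since the Dirichlet problem on $\tilde{D}_2$ has a unique solution and the boundary data $q_1|_{\partial\tilde{D}_2}$ change sign under $g$, we get $u\circ g=-u$, hence $q_2\circ g=-q_2$. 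This antisymmetry is the engine of the whole argument.

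First, the well-definedness of $\alpha_2$. The subtlety is that $q_2=q_1-u$ may itself be multi-valued (around even-order poles of $\alpha_1$ lying in $D_2$), but its $(1,0)$-part is not: writing $\beta_1$ for the meromorphic one-form of Theorem \ref{th:primitive_is_constant}(1), so that $\beta_1^2=\pi^*\alpha_1$ and $2\partial q_1=\beta_1$, and noting that $\partial u$ is holomorphic on all of $\tilde{D}_2$ (because $u$ is harmonic and single-valued there), the one-form $\beta_2:=2\partial q_2=\beta_1-2\partial u$ is single-valued and meromorphic on $\tilde{D}_2$, with poles only over the poles of $\alpha_1$, all of which are interior to $D_2$. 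From $q_2\circ g=-q_2$ we obtain $g^*\beta_2=-\beta_2$, so $\beta_2^2$ is $g$-invariant. Repeating the local computation in the proof of Theorem \ref{th:primitive_is_constant}(1) at a branch point --- in a coordinate $z$ with $g(z)=-z$ and $\pi$ of the form $w=z^2$, the identity $g^*\beta_2=-\beta_2$ forces $\beta_2=f(z)\,dz$ with $f$ an even function, whence $\beta_2^2=z^{4k}h(z)\,dz^2$ --- shows that $\beta_2^2$ extends meromorphically across the branch points and, being $g$-invariant, pushes down to a meromorphic quadratic differential $\alpha_2$ on $D_2$ with $\pi^*\alpha_2=\beta_2^2$; in local coordinates this reads $Q_2(z)\,dz^2=4(\partial_z(q_2\circ\pi^{-1}))^2\,dz^2$, which is exactly Definition \ref{de:induced_harmonic}(2), so $\alpha_2$ is well-defined. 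That $\alpha_2\not\equiv0$, i.e.\ $q_2$ is non-constant, follows from the maximum principle: were $q_2\equiv0$, then $q_1=u$ on $\tilde{D}_2$, so $q_1$ is single-valued and harmonic across $\partial\tilde{D}_2$ and --- by the period computation in Theorem \ref{th:primitive_is_constant}(2), a loop around the collar $\tilde{D}_1\setminus\overline{\tilde{D}_2}$ being homologous to $\partial\tilde{D}_1$, around which the period of $\beta_1$ is pure imaginary --- single-valued across the collar up to $\partial\tilde{D}_1$, where it vanishes; hence $q_1\equiv0$ on the compact bordered surface $\overline{\tilde{D}_1}$ and $\alpha_1=0$, contradicting admissibility.

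Admissibility of $\alpha_2$ for $D_2$ is now immediate: since $q_2=q_1-u$ vanishes identically on $\partial\tilde{D}_2$, $\mathrm{Re}(x_2)=q_2$ is constant on the lift of every one of the finitely many analytic boundary arcs of $D_2$ (these exist by Theorem \ref{th:boundary_arcs_vertices}), so by Proposition \ref{pr:trajectories_and_constants} (or directly, as in the proof of Theorem \ref{th:primitive_is_constant}) each such arc is a trajectory of $\alpha_2$; thus all but the finitely many vertices of $D_2$ lie on trajectory arcs, which is Definition \ref{de:admissible}. This establishes (1). For (2), I would first check that $\tilde{D}_2$ is a cover of $D_2$ adapted to $\alpha_2$, i.e.\ that the branch points of $\pi|_{\tilde{D}_2}$ --- the odd-order zeros and poles of $\alpha_1$ in $D_2$ --- coincide with the odd-order zeros and poles of $\alpha_2$: at a point where $\pi$ is a local biholomorphism, $\mathrm{ord}(\alpha_2)=2\,\mathrm{ord}(\beta_2)$ is even, while at a branch point the evenness of $\mathrm{ord}_z(\beta_2)$ (again from $g^*\beta_2=-\beta_2$) together with $w=z^2$ gives $\mathrm{ord}_w(\alpha_2)=\mathrm{ord}_z(\beta_2)-1$, which is odd. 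Hence Theorem \ref{th:primitive_is_constant} applies to $(D_2,\alpha_2,\tilde{D}_2)$ and produces a one-form $\beta$ with $\beta^2=\pi^*\alpha_2$ whose primitive, normalized to have real part $0$ on $\partial\tilde{D}_2$, has real part equal to the singular harmonic function attached to $\alpha_2$, unique up to sign. Since $\beta_2^2=\pi^*\alpha_2$ as well, $\beta=\pm\beta_2$, and since $q_2=\mathrm{Re}(x_2)$ already vanishes on $\partial\tilde{D}_2$, the function $q_2$ (as an element of the pair $\pm(q_1,q_2)$) is precisely that singular harmonic function, which is (2).

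The main obstacle is the descent in the second paragraph: both the meromorphic extension of $\beta_2^2$ across the branch points and the correct parity of the order of $\alpha_2$ there rest entirely on the antisymmetry $g^*\beta_2=-\beta_2$, which is why pinning down $u\circ g=-u$ at the very start is the crucial move. Once that symmetry is in hand, verifying $\alpha_2\not\equiv0$, the admissibility, and the ``adapted'' property are all routine, and part (2) follows formally from the uniqueness built into Theorem \ref{th:primitive_is_constant}.
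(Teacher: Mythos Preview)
Your argument is correct and follows the same skeleton as the paper's: establish the antisymmetry $q_2\circ g=-q_2$ via uniqueness of the Dirichlet solution, use it to make the local expression $4(\partial_z(q_2\circ\pi^{-1}))^2\,dz^2$ independent of the branch of $\pi^{-1}$, and then invoke Proposition~\ref{pr:trajectories_and_constants} together with $q_2|_{\partial\tilde D_2}=0$ for admissibility and part~(2). You package the well-definedness step a bit differently --- as pushing down the $g$-invariant form $\beta_2^2$ rather than checking branch-independence pointwise --- but this is the same idea. You are also more thorough than the paper on several points it glosses over or defers: the meromorphic extension of $\beta_2^2$ across branch points, the non-vanishing of $\alpha_2$ (needed for the strict inequality in the trajectory condition), and the verification that $\tilde D_2$ is adapted to $\alpha_2$, which the paper postpones to Proposition~\ref{pr:induced_things_are_modules}.

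One small matter: your opening reduction to $D_1=\disk$ cites Theorem~\ref{th:conformal_invariance}, which appears later and concerns invariance of the module $m$; all you actually need is the Riemann mapping theorem (and the paper's own proof works entirely in local coordinates without this reduction). Also, your non-vanishing argument is more elaborate than necessary: since $u$ is harmonic on $\tilde D_2$ while $q_1$ inherits the singularities of $\beta_1$ over the poles of $\alpha_1$ (which lie in $D_2$ by hypothesis), $q_2=q_1-u$ is genuinely singular there, so $\beta_2$ has poles and $\alpha_2\not\equiv 0$ immediately.
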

 \begin{proof}
  Let $g$ be the deck transformation of $\tilde{D}_1$.
  We claim that $q_2(g(z)) = - q_2(z)$.  To see this, let $u$ be the solution to the Dirichlet
  problem on $\partial \tilde{D}_2$ with boundary values equal to $q_1$ on $\partial \tilde{D}_2$,
  and recall that $q_2 = q_1 - u$.  We have that for all $z \in \partial \tilde{D}_2$,
  $u(g(z))= q_1(g(z))= - q_1(z) = - u(z)$.  Thus $u(g(z)) + u(z)$ is zero on $\partial \tilde{D}_2$
  and so $u(g(z)) = - u(z)$ for all $z \in \tilde{D}_2$.  In particular $q_2(g(z))= - q_2(z)$
  for all $z \in \tilde{D}_2$.

  Now fix an open set $V \subseteq D_2$, with a local coordinate $z$, where $V$
  is chosen so that $\pi^{-1}(V)$ has precisely
  two disjoint components $U$ and $\hat{U}$
  and $\pi$ has biholomorphic inverses $\pi^{-1}$ and $\hat{\pi}^{-1}$
  on $V$.  We thus have that $\pi^{-1} = g \circ \hat{\pi}^{-1}$ on $V$.  By the previous paragraph,
  \[  \frac{\partial q_2 \circ \pi^{-1}}{\partial z}(z)dz
     = - \frac{\partial q_2 \circ \hat{\pi}^{-1}}{\partial z}(z)dz  \]
  for all $z \in U$.  Thus in local coordinates the expression
  \[  Q_2(z)dz^2 =  4 \left(\frac{\partial q_2 \circ \pi^{-1}}{\partial z}(z) \right)^2dz^2 \]
  for $\alpha_2$
  is independent of the local choice of $\pi^{-1}$.  This shows that the quadratic differential
  on $D_2$ induced by $\alpha$ is well-defined.

  Thus to show that $\alpha_2$ is admissible for $D_2$
  we need only show that the boundary is a trajectory.  Let $x_2$ be the multi-valued meromorphic
  function on $\tilde{D}_2 \backslash \mathfrak{B}$ whose real part is $q_2$.  By the
  Cauchy-Riemann equations, in local coordinates we have
  \[  Q_2(z)dz^2 = 4 \left( \frac{\partial q_2 \circ \pi^{-1} }{\partial z} \right)^2 dz^2
     = \left(  \frac{\partial x_2 \circ \pi^{-1}}{\partial z} \right)^2 dz^2;   \]
  That is, $\alpha_2 = \left( \partial (x_2  \circ \pi^{-1} ) \right)^2$.
  Thus $\alpha_2$ is admissible by Proposition \ref{pr:trajectories_and_constants}.
  This also proves the second claim.
 \end{proof}
 \begin{proposition} \label{pr:induced_properties}
  Let $D_1$ be a conformal disk and $D_2$ be a simple domain in $D_1$.  Let $\alpha_1$ be a quadratic
  differential admissible for $D_1$.
  \begin{enumerate}
   \item If $\tilde{D}_1$ and $\hat{D}_1$ are two distinct covers of $D_1$ adapted to $\alpha_1$,
    and $g:\tilde{D}_1 \rightarrow \hat{D}_1$ is a conformal map,
    then the corresponding multi-valued holomorphic functions $x_1$ and $\hat{x}$ satisfy $\hat{x}_1 \circ g = x_1$.
    Similarly $\hat{q}_i \circ g = q_i$ for $i=1,2$.
   \item  The quadratic differential on $D_2$ induced by $\alpha_1$ is independent of the choice
   of cover adapted to $\alpha_1$.  In particular, the induced quadratic differential is well-defined.
  \end{enumerate}
 \end{proposition}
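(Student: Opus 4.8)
The plan is to deduce part~(2) from part~(1) together with Proposition~\ref{pr:cover_uniqueness}, so I would prove (1) first. Write $\pi:\tilde D_1\to D_1$ and $\hat\pi:\hat D_1\to D_1$ for the two covers of $D_1$ adapted to $\alpha_1$. By Proposition~\ref{pr:cover_uniqueness} a conformal map between them exists, and after composing with the deck transformation of $\hat D_1$ if necessary I may assume the given $g$ is a lift, i.e. $\hat\pi\circ g=\pi$; then $g$ carries the branch locus $\mathfrak B$ of $\pi$ bijectively onto that of $\hat\pi$ and restricts to a conformal bijection $\tilde D_2=\pi^{-1}(D_2)\to\hat D_2=\hat\pi^{-1}(D_2)$. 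When $\alpha_1$ has no odd-order zeros or poles, $\tilde D_1$ and $\hat D_1$ are each two disjoint copies of $D_1$ and, under the sign convention of Remark~\ref{re:harmonic_pair_convention}, both assertions of (1) are immediate; so from then on I would assume at least one odd-order zero or pole is present, in which case $\tilde D_1$ and $\hat D_1$ are connected.

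For the connected case I would compare the square-root one-forms. Let $\beta$, $\hat\beta$ be the meromorphic one-forms of Theorem~\ref{th:primitive_is_constant}(1) with $\beta^2=\pi^*\alpha_1$ and $\hat\beta^2=\hat\pi^*\alpha_1$. Since $\hat\pi\circ g=\pi$, pulling back gives $g^*\hat\beta^2=\pi^*\alpha_1=\beta^2$, so $g^*\hat\beta=\pm\beta$ on the connected set $\tilde D_1\setminus\mathfrak B$; replacing $\hat x_1$ by $-\hat x_1$ if needed (permissible, since $x_1,\hat x_1$ are defined only up to sign) I arrange $g^*(\partial\hat x_1)=\partial x_1$. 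Then $\hat x_1\circ g$ and $x_1$ are multivalued primitives of the same one-form on $\tilde D_1\setminus\mathfrak B$, with identical periods, hence differ by an additive constant; because $g$ maps $\partial\tilde D_1$ onto $\partial\hat D_1$ and $q_1=\mathrm{Re}\,x_1$, $\hat q_1=\mathrm{Re}\,\hat x_1$ both vanish there, the real parts agree on the boundary, forcing $\hat q_1\circ g=q_1$ and making the constant imaginary; absorbing it gives $\hat x_1\circ g=x_1$. For the induced function, I would note that $\hat u\circ g$ is harmonic on $\tilde D_2$ with boundary values $\hat q_1\circ g=q_1=u$ on $\partial\tilde D_2$, so $\hat u\circ g=u$ by uniqueness of the Dirichlet solution, whence $\hat q_2\circ g=\hat q_1\circ g-\hat u\circ g=q_1-u=q_2$.

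For part~(2), given any two adapted covers I would take $g$ normalized as above and compute the two induced quadratic differentials in a common local coordinate $z$ on $D_2$, choosing local sections with $\hat\pi^{-1}=g\circ\pi^{-1}$; this is legitimate because, by Proposition~\ref{pr:induced_qd_wd_and_admissible}, the induced quadratic differential does not depend on the choice of section. By part~(1), $\hat q_2\circ g=q_2$, so the holomorphic functions $\hat x_2\circ g$ and $x_2$ differ by an imaginary constant, hence $\hat x_2\circ\hat\pi^{-1}=(\hat x_2\circ g)\circ\pi^{-1}$ has the same $z$-derivative as $x_2\circ\pi^{-1}$; therefore $(\partial(\hat x_2\circ\hat\pi^{-1}))^2=(\partial(x_2\circ\pi^{-1}))^2$ at every regular point, and the two induced quadratic differentials agree. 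Combined with the section-independence from Proposition~\ref{pr:induced_qd_wd_and_admissible}, this shows the induced quadratic differential is well-defined.

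I expect the obstacles here to be organizational rather than deep: the substantive input is Proposition~\ref{pr:cover_uniqueness}, which both produces $g$ and permits the reduction to a covering-compatible $g$, and the one genuinely delicate point is bookkeeping the sign ambiguity of $\beta$ versus $\hat\beta$ together with the multivaluedness of $x_1$ and $x_2$ off the branch locus --- one must check that ``$\hat x_1\circ g$ and $x_1$ differ by a constant'' is meaningful, which it is because $\tilde D_1\setminus\mathfrak B$ is connected and the two candidate primitives have the same periods. The disconnected-cover case is disposed of separately and trivially via the convention of Remark~\ref{re:harmonic_pair_convention}.
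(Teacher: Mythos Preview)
Your proposal is correct and follows the same route as the paper: both invoke Proposition~\ref{pr:cover_uniqueness} for~(1) and then read off~(2) directly. The paper's own proof is in fact just the two sentences ``The first claim follows from Proposition~\ref{pr:cover_uniqueness}. The second is immediate,'' so you have simply unpacked the details the paper leaves implicit---normalizing $g$ to a lift, matching the square-root one-forms up to sign, pinning down the additive constant via the boundary condition, and pushing the Dirichlet solution through $g$.
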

 \begin{proof}  The first claim follows from Proposition \ref{pr:cover_uniqueness}.
  The second is immediate.
 \end{proof}
 It is also elementary that
 \begin{proposition}  If $D_1$ is a conformal disk, $\alpha_1$ is an
 admissible quadratic differential for $D_1$, and $\tilde{D}_1$ is a cover adapted
 to $\alpha_1$, then the harmonic pair induced by $(D_1,D_1,\alpha_1)$
  is $\pm(q_1,q_1)$.   Furthermore the induced differential $Q_2(z)dz^2$ on $D_2$ equals $Q(z)dz^2$.
 \end{proposition}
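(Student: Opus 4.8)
The plan is simply to unwind Definition~\ref{de:induced_harmonic} in the degenerate case $D_2 = D_1$, where the only real content is that the Dirichlet problem with vanishing boundary data has only the zero solution.

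First I would observe that when $D_2 = D_1$ we have $\tilde{D}_2 = \pi^{-1}(D_2) = \pi^{-1}(D_1) = \tilde{D}_1$, so the auxiliary function $u$ of Definition~\ref{de:induced_harmonic} is the (single-valued, non-singular) harmonic function on $\tilde{D}_1$ agreeing with $q_1$ on $\partial\tilde{D}_1$. By the normalization of the additive constant of $x_1$, which is legitimate by part~(3) of Theorem~\ref{th:primitive_is_constant}, we have $q_1 \equiv 0$ on $\partial\tilde{D}_1$; since $\tilde{D}_1$ is a bordered Riemann surface (Remark~\ref{re:double_cover_has_border}) on which the maximum principle applies, the only such $u$ is $u \equiv 0$. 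Hence $q_2 = q_1 - u = q_1$, and the harmonic pair induced by $(D_1,D_1,\alpha_1)$ is $\pm(q_1,q_1)$. In the disconnected case (no odd-order zeros or poles, so no branch points) one runs the same argument on each of the two sheets; the sign convention of Remark~\ref{re:harmonic_pair_convention} is preserved because it only constrains $q_1$, which is unchanged.

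For the statement about the quadratic differential I would use the local formula from Definition~\ref{de:induced_harmonic}(2). Since $q_2 = q_1$, in any local coordinate $z$ on $D_1$ and any local branch of $\pi^{-1}$,
\[
  Q_2(z)\,dz^2 = 4\left(\frac{\partial(q_2\circ\pi^{-1})}{\partial z}\right)^2 dz^2
  = 4\left(\frac{\partial(q_1\circ\pi^{-1})}{\partial z}\right)^2 dz^2,
\]
which by the Cauchy--Riemann equations equals $\bigl(\partial(x_1\circ\pi^{-1})\bigr)^2$, and this is $Q(z)\,dz^2$ by \eqref{eq:Q_and_x_relation}. Equivalently, $x_2$ and $x_1$ have the same real part, hence differ by an imaginary constant, hence share the differential $\beta = \partial x_1$, so $\alpha_2 = \beta^2 = \alpha_1$; well-definedness of $\alpha_2$ is already supplied by Propositions~\ref{pr:induced_qd_wd_and_admissible} and~\ref{pr:induced_properties}.

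I do not expect a genuine obstacle here: the whole proof reduces to uniqueness of the solution of the Dirichlet problem with zero boundary data, together with the chosen normalization of $x_1$. The only point deserving a sentence of care is the disconnected-cover convention, which is handled sheet by sheet exactly as in the proof of Proposition~\ref{pr:induced_qd_wd_and_admissible}.
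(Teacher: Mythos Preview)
Your proposal is correct and is exactly the elementary argument the paper leaves implicit (the paper provides no proof, merely prefacing the proposition with ``It is also elementary that''). The only content is the uniqueness of the Dirichlet solution with zero boundary data, which forces $u\equiv 0$ and hence $q_2=q_1$; the claim about the induced differential then follows from \eqref{eq:Q_and_x_relation} as you wrote.
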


 Finally, the harmonic pairs and induced quadratic differential also have a kind of transitivity property.
 \begin{proposition}  \label{pr:induced_things_are_modules} Let $D_1$ be a conformal
 disk and let $D_2$ and $D_3$ be simple domains in $D_1$ satisfying $D_3 \subseteq D_2 \subseteq D_1$.
  Let $\alpha_1$ be an admissible quadratic differential for $D_1$, all of
  whose poles are in $D_3$.  Let $\pi_1:\tilde{D}_1 \rightarrow D_1$
  be a double cover adapted to $Q_1(z)dz^2$.  For $i=1,2$ let
  $\alpha_i$ be the quadratic differentials on $D_i$ induced by $\alpha_1$.
  \begin{enumerate}
   \item $\left. \pi_1 \right|_{\tilde{D}_2}$
    is a double cover of $D_2$ adapted to $\alpha_2$.
   \item  If $\pm(q_1,q_i)$
    are the harmonic pairs induced by $(D_1,D_i,\alpha_1)$ for $i=1,2$ then
    $\pm(q_2,q_3)$ is the harmonic pair induced by $(D_2,D_3,\alpha_2)$.
   \item $\alpha_3$ is the
    quadratic differential on $D_3$ induced by $\alpha_2$.
  \end{enumerate}
 \end{proposition}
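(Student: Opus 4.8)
The plan is to deduce (1) from a parity count of the orders of $\alpha_2$ on $D_2$, after which (2) and (3) follow almost formally from uniqueness for the Dirichlet problem; I expect the only real difficulty to be the parity count at a branch point in (1). Set $\tilde D_2=\pi_1^{-1}(D_2)$, $\tilde D_3=\pi_1^{-1}(D_3)$ and $\pi=\pi_1|_{\tilde D_2}$, and retain the notation of Definition \ref{de:induced_harmonic}: $x_1$ is a primitive of $\beta_1$ on $\tilde D_1$, $q_1=\operatorname{Re} x_1$ is normalized to vanish on $\partial\tilde D_1$, $u$ is the harmonic function on $\tilde D_2$ equal to $q_1$ on $\partial\tilde D_2$, and $q_2=q_1-u$. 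Letting $x_2$ be the multi-valued primitive with $\operatorname{Re} x_2=q_2$ (as in the proof of Proposition \ref{pr:induced_qd_wd_and_admissible}), the meromorphic one-form $\beta_2:=\partial x_2=2\partial q_2=\beta_1-2\partial u$ on $\tilde D_2$ satisfies $\pi^*\alpha_2=\beta_2^2$; here $2\partial u$ is a single-valued holomorphic one-form since $u$ is harmonic and $\alpha_1$ has no poles on $\partial D_2$.

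For (1), note that $\pi\colon\tilde D_2\to D_2$ is a double-sheeted branched cover with order-two branch points, being the restriction to the open connected set $D_2$ of the double cover $\pi_1$; its branch set is $\mathfrak B\cap\tilde D_2$, where $\mathfrak B$ is the branch set of $\pi_1$, and by Definition \ref{de:adapted_cover} and Proposition \ref{pr:cover_uniqueness} the set $\pi_1(\mathfrak B)$ is exactly the set of odd-order zeros and poles of $\alpha_1$. So (1) reduces to showing that $\pi(\mathfrak B\cap\tilde D_2)$ is exactly the set of odd-order zeros and poles of $\alpha_2$. One inclusion is immediate: at any $\tilde p\in\tilde D_2$ at which $\pi$ is a local biholomorphism, $\operatorname{ord}_{\pi(\tilde p)}\alpha_2=\operatorname{ord}_{\tilde p}(\pi^*\alpha_2)=2\operatorname{ord}_{\tilde p}\beta_2$ is even, so every odd-order zero or pole of $\alpha_2$ lies below a branch point. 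For the other inclusion, let $b\in\mathfrak B\cap\tilde D_2$ lie over a zero or pole $p$ of $\alpha_1$ of odd order $2k+1$, and choose coordinates $z$ at $b$ and $w$ at $p$ in which $\pi$ is $w=z^2$; then the nontrivial deck transformation $g$ is $z\mapsto -z$ in this coordinate. Writing $\alpha_1=Q_1(w)dw^2$ and $\alpha_2=Q_2(w)dw^2$ locally, we get $\pi^*\alpha_1=4z^2Q_1(z^2)\,dz^2$, which is $z^{4k+4}$ times a nonvanishing power series in $z^2$, so $\beta_1=\sqrt{\pi^*\alpha_1}$ is $z^{2k+2}$ times a nonvanishing power series in $z^2$; in particular its Laurent expansion at $b$ contains only even powers of $z$.

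The delicate step is to control $\partial u$ at $b$. Since $u\circ g=-u$ (established in the proof of Proposition \ref{pr:induced_qd_wd_and_admissible}) and $g$ is $z\mapsto -z$, the function $u$ is odd in $z$, hence $\partial u/\partial z$ is even in $z$, so $2\partial u$ likewise has a Laurent expansion in even powers of $z$. Therefore $\beta_2=\beta_1-2\partial u$ does too, and $\beta_2\not\equiv 0$ (otherwise $\alpha_2\equiv 0$, contradicting the admissibility of $\alpha_2$ proved in Proposition \ref{pr:induced_qd_wd_and_admissible}), so $\operatorname{ord}_b\beta_2$ is an even integer, say $2j$. Then $\operatorname{ord}_b(\pi^*\alpha_2)=4j$, while $\pi^*\alpha_2=4z^2Q_2(z^2)\,dz^2$ gives $\operatorname{ord}_b(\pi^*\alpha_2)=2+2\operatorname{ord}_p\alpha_2$; hence $\operatorname{ord}_p\alpha_2=2j-1$ is odd, which completes (1). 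Running the same argument for the pair $D_3\subseteq D_1$ shows that $\pi(\mathfrak B\cap\tilde D_3)$ is exactly the set of odd-order zeros and poles of $\alpha_2$ lying in $D_3$, so $\tilde D_3=\pi^{-1}(D_3)$ serves as the cover of $D_3$ adapted to $\alpha_2$ in the construction of Definition \ref{de:induced_harmonic} applied to $(D_2,D_3,\alpha_2)$. (In the degenerate case $\mathfrak B=\emptyset$ the covers have two components, all the claims are read sheet by sheet, and the sign normalization is that of Remark \ref{re:harmonic_pair_convention}.)

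For (2) and (3): since $\pi^*\alpha_2=(\partial x_2)^2$, the square roots of $\alpha_2$ correspond to $\pm\partial x_2$, the function $x_2$ is one of their primitives, and $\operatorname{Re} x_2=q_2$ vanishes on $\partial\tilde D_2$ because $u=q_1$ there; so in Definition \ref{de:induced_harmonic} applied to $(D_2,D_3,\alpha_2)$ we may take $x_2$ as the normalized primitive, which gives first component $q_2$. Let $v$ be the harmonic function on $\tilde D_3$ equal to $q_2$ on $\partial\tilde D_3$; then the pair induced by $(D_2,D_3,\alpha_2)$ is $\pm(q_2,\,q_2-v)$. With $w$ the harmonic function on $\tilde D_3$ equal to $q_1$ on $\partial\tilde D_3$, the pair induced by $(D_1,D_3,\alpha_1)$ is $\pm(q_1,q_3)$ with $q_3=q_1-w$. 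Now $w-v$ is the harmonic function on $\tilde D_3$ with boundary values $(q_1-q_2)|_{\partial\tilde D_3}=u|_{\partial\tilde D_3}$, and $u$ is already harmonic on $\tilde D_2\supseteq\tilde D_3$ and continuous up to $\partial\tilde D_3$, so by uniqueness of the Dirichlet problem $w-v=u$ on $\tilde D_3$. Hence $q_2-v=(q_1-u)-v=q_1-w=q_3$, which is (2). Finally, by (2) a normalized primitive of a square root of $\alpha_2$ whose real part is $q_3$ agrees, up to sign and an additive imaginary constant, with the function $x_3$ used to define $\alpha_3$; so the quadratic differential on $D_3$ induced by $\alpha_2$, namely $(\partial(x_3\circ\pi^{-1}))^2$, equals $\alpha_3$, which is (3). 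The crux of the whole argument is the parity observation in (1) — that near a branch point both $\beta_1$ and $\partial u$ are even in the ramification coordinate, forcing $\alpha_2$ to have odd order downstairs — everything else being bookkeeping and uniqueness for harmonic functions.
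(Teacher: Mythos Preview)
Your proof is correct and follows the same overall structure as the paper's. For part (1), both reduce to matching the odd-order zeros and poles of $\alpha_2$ in $D_2$ with those of $\alpha_1$; your parity computation in the ramification coordinate---showing that both $\beta_1$ and $\partial u$ have Laurent expansions in even powers of $z$, hence $\operatorname{ord}_b\beta_2=2j$ and $\operatorname{ord}_p\alpha_2=2j-1$---is actually more complete than the paper's version, which argues via the deck transformation that $\partial q_2$ vanishes at the branch point but does not explicitly verify that the resulting order of $\alpha_2$ downstairs is odd rather than even. Parts (2) and (3) are essentially identical to the paper's: the paper compresses your Dirichlet-uniqueness step into the single observation that $q_2-q_3=(q_2-q_1)+(q_1-q_3)$ is harmonic on $\tilde D_3$ with $q_3=0$ on $\partial\tilde D_3$, and then notes that (3) is immediate from (2).
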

 \begin{proof}
  We can assume that $D_1 = \disk$, so that we have
  a global parameter $z$.  Let $\alpha_i = Q_i(z)dz^2$ for $i=1,2,3$.
  We first prove (1).
  It is immediate that $x_2$ has the same poles as $x_1$, and of the same order, since $q_1 - q_2$ is non-singular.
  Thus $Q_1(z)dz^2$ and $Q_2(z)dz^2$ have odd order poles at precisely the same points.

  Since $Q_2$ is a perfect square at all points which are not branch points, $Q_2$ cannot
  have an odd order zero unless $Q_1$.  So
  we need only show that if $Q_1(z)dz^2$ has an odd order zero at a point $w_0 \in D_2$, then
  $Q_2(z)dz^2$ also has an odd order zero at $w_0$.  Let $g$ denote the unique deck
  transformation of
  order two on $\tilde{D}_1$.  Assume that $z_0 \in D_2 \subseteq D_1$ is an odd order zero of $Q_1(z)dz^2$.
  Thus $w_0=\pi_1^{-1}(z_0)$ is a branch point of $\pi_1$ and hence a fixed point of $g$.  Since
  $q_2(g(w))=-q_2(w)$
  for all $w$ we also have that in a local coordinate $w$
  \begin{equation} \label{eq:temp_flip}
   \frac{\partial q_2}{\partial w}(w_0)dw =
   \frac{\partial q_2}{\partial w}(g(w_0))dw =-\frac{\partial q_2}{\partial w}(w_0) dw
  \end{equation}
  so $w_0$ is a zero of $\partial q_2/\partial z$ and hence $z_0=\pi_1(w_0)$ is a zero of
  \begin{equation} \label{eq:Q2temp}
   Q_2(z) dz^2 = 4 \left( \frac{\partial q_2 \circ \pi_1^{-1}}{\partial z}(z) \right)^2 dz^2.
  \end{equation}
  This proves (1).

  Next we prove (2).  By Proposition \ref{pr:induced_qd_wd_and_admissible} $q_2$ is the
  singular harmonic function associated to $Q_2(z)dz^2$ as in Theorem \ref{th:primitive_is_constant}.
  Since $q_3=0$ on $\partial \tilde{D}_3$ and $q_2-q_3=q_2 - q_1 + q_1 - q_3$ is harmonic on $\tilde{D}_3$,
  it follows that $\pm(q_2,q_3)$ is the harmonic pair induced by $(D_2,D_3,Q_2(z) dz^2)$.

  (3) is an immediate consequence of (2).
 \end{proof}
\end{subsection}
\begin{subsection}{Definition of the conformal invariants}

  Given a smooth function $h$ on a Riemann surface $R$, we define a differential
  operator $\ast dh$ as follows.   On any domain $U \subset
 R$ with local parameter $z=x+iy$ and real-valued function $h$ define the one-form
 \[  \ast dh = \frac{\partial h}{\partial x} dy - \frac{\partial h}{\partial y}dx.  \]
 $\ast$ is often called the Hodge star operator.

   The expression
 \begin{equation} \label{eq:star_dh_using_dbydz}
  \ast dh = \mbox{Re}\left( \frac{2}{i} \frac{\partial h}{\partial z} dz \right)
 \end{equation}
  is often convenient.
 For example, it is easily computed that
 if $z=g(w)$ is a conformal map defined on an open neighbourhood of a contour
 $\gamma$ then for any real maps $h_1$ and $h_2$
 \begin{eqnarray}  \label{eq:star_change_o_coord}
  \int_\gamma h_1 \circ g \ast d \left( h_2 \circ g\right)
   &=&  \mbox{Re} \left( \int_\gamma h_1 \circ g  \frac{2}{i} \frac{\partial h_2}{\partial z} \circ g \cdot g'(w) \,dw \right) \\ \nonumber
  & = & \mbox{Re} \left( \int_{g \circ \gamma} h_2 \frac{2}{i} \frac{\partial h_2}{\partial z} \,dz \right) \\ \nonumber
 & = & \int_{g \circ \gamma} h_1 \ast d h_2.
 \end{eqnarray}
 This amounts to the same thing as the fact that the Hodge star operator is independent of the choice of local parameter $z$.

 Finally observe that in a local parameter $z = x+iy$,
 \[  d \ast dh = \triangle h \cdot dx \wedge dy = -2 i \, \partial \overline{\partial} h \cdot d\bar{z} \wedge dz.  \]
 where $\partial$ and $\overline{\partial}$ are the standard d- and d-bar operators, and
 \[  \triangle h = \frac{\partial^2 h }{\partial x^2}  +  \frac{\partial^2 h }{\partial y^2}  \]
 is the Laplacian.  It is easily verified that $d \ast dh$ is independent of the choice of local parameter.
 \begin{remark}
  Note that if $\gamma$ is a positively oriented curve in a local coordinate $U$, $n$ denotes the  normal
   directed to the right of travel and $ds$ denotes infinitesimal Euclidean arc length, then
  \begin{equation} \label{eq:nehari_hodge_star}
   \int_{\gamma} \ast dh = \int_{\gamma} \frac{\partial h}{\partial n} ds .
  \end{equation}
  This is the notation employed by Nehari.  We will use this notation ahead when computing examples in the plane.
 \end{remark}

 We may now define the conformal invariants.
 \begin{definition}[Module associated to $(D,D_1,\alpha)$] \label{de:general_module}  Let $D_1$ be a conformal disk and $D_2 \subseteq D_1$ be a simple
  domain in $D_1$.  Let $\alpha$ be a quadratic differential which is
  admissible for $D_1$.
 Let $\pi:\tilde{D}_1 \rightarrow D_1$ be a double cover of $D_1$ adapted to $Q(z)dz^2$.
  Let $\pm(q_1,q_2)$ be the harmonic pair induced by $(D_1,D_2,\alpha)$ on $\tilde{D}_1$.  We call
  \begin{equation} \label{eq:def_module}
    m(D_1,D_2,\alpha) = \int_{\partial \tilde{D}_2} q_1 \ast dq_2
  \end{equation}
  the module of $(D_1,D_2,\alpha)$.
 \end{definition}
 The meaning of this integral requires clarification.   The
 issues are as follows.
 If $D_2$ is bordered by a single analytic curve, then this is an ordinary contour integral.  Consider however the following
 example.  Let
 $D_2$ be the disk with a single radial slit, so that $\tilde{D}_2$ also possesses at least one slit $\Gamma$
 say.
 It is clear that $q_2$ will extend continuously
 to $\Gamma$ (in fact all of $\partial \tilde{D}_2$ in general).  However $\ast d q_2$ will have two distinct extensions,
 one for each ``side'' of the slit.  It is natural in
 this case to interpret the integral along the boundary of $\partial \tilde{D}_2$ as containing one integral
 for each extension of $\ast dq_2$, with opposite orientations.  We will make
 this precise below.

 It also needs to be established that this module is well-defined.  There are two issues:
 first, that the integral converges, and second, that the
 module depends only on $D_1$, $D_2$ and $\alpha$ as the notation suggests, and not on
 the choice of double cover.  Although the convergence is elementary, there are many details
 to address, so we have relegated the proof to an appendix in order not to interrupt
 the flow of the paper.
 The remainder of this section will be devoted to establishing that the
 integral is well defined,
 and also that the module is conformally invariant.

 We now clarify
 the meaning of the integral (\ref{eq:def_module}).
 The function $\ast dq_2$ has a one-sided extension in
 the following sense.  For any point $p \in \partial \tilde{D}_2$ such that $\pi(p)$ is not
 a zero of $\alpha$, by \ref{th:zero_classification}
 there is an open disk $B$ centred on $p$ such that $B \backslash \partial \tilde{D}_2$
 consists of two connected components $U$ and $V$, each of which is either in $\tilde{D}_2$ or
 disjoint from it.  At least one of $U$ or $V$ must be contained in $\tilde{D}_2$;
 assume that it is $U$.  Since $\ast dq_2$ is a harmonic
 form and $\partial \tilde{D}_2 \cap B$ is an analytic curve (it is in particular
 locally of the form $\mathrm{Re}(-2i\partial h/\partial z \,dz)$ for some holomorphic $h$ by (\ref{eq:star_dh_using_dbydz}),
 it has a harmonic extension
 to some open neighbourhood of $U \cup \left( \partial \tilde{D}_2 \cap B \right)$.  If $V$ is
 also in $\tilde{D}_2$, the same argument applies to $V$; however, the two extensions do
 not in general agree on their overlap.
 We abbreviate the above paragraph by saying that $\ast dq_2$ has a harmonic extension to $\partial \tilde{D}_2$ from
 one or two sides
 in a neighbourhood of any point $p$ such that $\pi(p)$ is not a zero.

 Let $\mathfrak{V} \subset \partial D_2$ consist of all points in $\partial D_2$ which are either
 zeros of $\alpha$ or terminal points of trajectories of $\alpha$ (which may in fact be regular points).
 We call this the set of vertices of $\partial D_2$.  We will also call the set $\pi^{-1}(\mathfrak{V})$
 the vertices of $\partial \tilde{D}_2$.

 Let $\Gamma$ be an arc of $\partial \tilde{D}_2$ whose endpoints are vertices,
 and containing no other vertices.   We adopt the convention that the endpoints are not
 in $\Gamma$ (or any analytic arcs below).
 By the argument above, one of two possibilities hold: either (1) for all $p \in \Gamma$,
 there is an open set $B$ containing $p$ such that $B \cap \tilde{D}_2$ has precisely two connected components
 and precisely one of these components is in $\tilde{D}_2$, or (2) for all $p \in \Gamma$, there
 is an open set $B$ containing $p$ such that $B \cap \tilde{D}_2$ has precisely two connected components
 and both are in $\tilde{D}_2$.   If $\Gamma$ satisfies the first condition we call it
  a one-sided boundary arc and if it satisfies the second condition we call it a two-sided boundary arc.
 The same reasoning and terminology holds for sub-arcs of $\partial D_2$ with this property.

\begin{definition} Let $D_2$ be a simple subset of a conformal disk $D_1$.  A complete set of
 maximal boundary arcs of $\tilde{D}_2$ is a collection
 $\gamma_i$, $i=1,\ldots,2m$ oriented analytic arcs with the following properties:
 \begin{enumerate}
  \item $\cup_{i=1,\ldots,m} \gamma_i \cup \pi^{-1}(\mathfrak{V}) = \partial \tilde{D}_2$
  \item the image of $\gamma_i$ joins two vertices, and contains no other vertices
  \item for every one-sided arc $\Gamma$ of $\partial \tilde{D}_2$, there is precisely
  one $\gamma_i$ with the same image; $\gamma_i$ is positively oriented with respect to $\tilde{D}_2$
  \item for every two-sided arc $\Gamma$ of $\partial \tilde{D}_2$, there are precisely two
   curves $\gamma_i$ and $\gamma_j$ with the same image as $\Gamma$; $\gamma_i$ and $\gamma_j$ have opposite orientations.
 \end{enumerate}
 \end{definition}
 It is clear that the collection of $\gamma_i$ are determined uniquely up to ordering.
 Also, given a point on a two-sided arc $\gamma_i$ and a neighbourhood $B$ of $p$ such
 that $B \cap \tilde{D}_2$ has two connected components, $\gamma_i$ will be positively
 oriented with respect to precisely one of the components
 of $B \cap \tilde{D}_2$.  We extend $\ast dq_2$ from this component to $\gamma_i$.
 With this convention we define the integral (\ref{eq:def_module}) as follows.
 \begin{definition} \label{de:integral_with_corners}  Let $D_2$ be a simple domain
  in a conformal disk $D_1$.  Provided that each integral converges, we define
  \[   \int_{\partial \tilde{D}_2} q_1 \ast dq_2 = \sum_{i=1}^m  \int_{\gamma_i} q_1 \ast dq_2  \]
  where $\gamma_1,\ldots,\gamma_m$ are a complete collection of maximal boundary arcs satisfying
  properties (1) - (4) above, and on each arc $\gamma_i$ we choose the harmonic
  extension of $\ast dq_2$ determined by the orientation of $\gamma_i$.
 \end{definition}

 The convergence of this integral is guaranteed by the following theorem.
 The proof is given in the Appendix \ref{se:appendix}.
 \begin{theorem} \label{th:convergence_and_nice_approximation}
  Let $D_2$ be a simple domain in a conformal disk $D_1$.  Let $\alpha$ be a quadratic differential
  which is admissible for $D_1$ and let $\pm (q_1,q_2)$ be the induced harmonic pair, and let $\gamma_1,\ldots,\gamma_m$
  be a complete set of maximal boundary arcs of $\partial \tilde{D}_2$.
    Each integral
  \begin{equation} \label{eq:integral_conv_thm_temp}
    \int_{\gamma_i} q_1 \ast dq_2
  \end{equation}
  converges.  Furthermore, letting $F:\disk \rightarrow D_2$ be a
  conformal bijection and setting $C_r$ to be the curve $|z|=r >1$ with positive
  orientation, we have
  \[  \int_{\partial \tilde{D}_2} q_1 \ast dq_2 = \lim_{r \nearrow 1} \int_{\pi^{-1}(F(C_r))} q_1 \ast dq_2.  \]
 \end{theorem}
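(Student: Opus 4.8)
The plan is to reduce the whole statement to a local analysis near the finitely many vertices, since away from the vertices each $\gamma_i$ is a compact analytic arc on which $q_1$ is bounded and $\ast dq_2$ extends harmonically from the appropriate side, so the integrand is continuous and the integral trivially converges. Thus the only possible failure of convergence is at the endpoints of the $\gamma_i$, i.e.\ at points of $\pi^{-1}(\mathfrak{V})$. First I would pass, by Theorem \ref{th:primitive_is_constant} and conformal invariance, to the model $D_1=\disk$ with global parameter $z$ and $\alpha=Q(z)dz^2$ a rational differential; and then use Remark \ref{re:double_cover_has_border} to work upstairs on $\tilde D_2$, recalling that $q_1$ extends continuously (indeed harmonically on $\tilde D_1$ minus its singularities) and vanishes on $\partial\tilde D_1$, while $q_2=q_1-u$ with $u$ harmonic and bounded on $\tilde D_2$.

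Near a vertex $v\in\pi^{-1}(\mathfrak V)$ I would invoke Theorem \ref{th:zero_classification}: choosing a local uniformizer $\phi$ sending a neighbourhood of $v$ to a neighbourhood of $0$, the differential $\pi^*\alpha$ becomes $w^n\,dw^2$ (with $n\ge 0$), so a local primitive $x_2$ of $\beta=\partial x_2$ behaves like $w^{(n+2)/2}$ up to a constant (and like $w$ — the regular case — when $v$ is a terminal point of a single trajectory arc but not a zero). Consequently $q_2=\mathrm{Re}(x_2)$ is Hölder-continuous near $v$ and $\ast dq_2$, expressed via \eqref{eq:star_dh_using_dbydz} as $\mathrm{Re}\bigl((2/i)\,x_2'(w)\,dw\bigr)$, has modulus $O(|w|^{n/2})$ along the boundary arcs emanating from $v$. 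Since $q_1$ is continuous (hence bounded) near $v$, the integrand $q_1\ast dq_2$ is bounded near $v$ when $n\ge 0$, and in all cases the arc length of $\gamma_i$ near $v$ is finite, so each integral \eqref{eq:integral_conv_thm_temp} converges absolutely. I should be careful to treat both the one-sided and two-sided cases uniformly: in the two-sided case the two extensions of $\ast dq_2$ to a single arc image differ, but each is separately controlled by the same local expansion, so the argument is unchanged.

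For the limit formula I would apply Green's identity on the region $\pi^{-1}\bigl(F(\{r<|z|<\rho\})\bigr)$ for $r$ close to $1$ and $\rho$ close to $1$ from above, with the harmonic functions $q_1$ and $q_2$; more precisely, integrate $d(q_1\ast dq_2 - q_2\ast dq_1)$, which vanishes where $q_1,q_2$ are harmonic (this holds on the annular region once $r$ is large enough that no singularities of $q_1$ lie in it — the poles of $\alpha$ are in $D_2$, hence in $\pi^{-1}(F(\{|z|<r\}))$ for $r$ near $1$, and $u$ is everywhere regular on $\tilde D_2$). Boundary terms over $\pi^{-1}(F(C_r))$ stay bounded, and on the inner boundary the symmetrized form lets one replace $\int_{\partial\tilde D_2}q_1\ast dq_2$ by $\lim_{r\nearrow 1}\int_{\pi^{-1}(F(C_r))}q_1\ast dq_2$; since $q_1\to 0$ on $\partial\tilde D_1$ is not what is happening here — rather $q_1$ and $q_2$ are both continuous up to $\partial\tilde D_2$ — the limit is identified with the sum over the $\gamma_i$ by dominated convergence, using the local bounds just established to control the contribution of small neighbourhoods of the vertices uniformly in $r$. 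The main obstacle, and the reason this is deferred to the Appendix, is precisely this uniform control near the vertices combined with the bookkeeping of one- versus two-sided arcs when $F(C_r)$ degenerates onto a slit-type boundary: one must show that the portion of $\int_{\pi^{-1}(F(C_r))}q_1\ast dq_2$ lying over a small disk about each vertex tends to zero uniformly as the disk shrinks, independently of $r$, which is exactly where the Hölder estimates from Theorem \ref{th:zero_classification} do the work.
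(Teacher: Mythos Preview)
There is a genuine gap in your local analysis near the vertices. You claim that in a coordinate where $\pi^*\alpha = w^n\,dw^2$ a local primitive $x_2$ of $\beta$ behaves like $w^{(n+2)/2}$, and hence $\ast dq_2 = O(|w|^{n/2})$. But this conflates $q_1$ with $q_2$: the one-form $\beta$ with $\beta^2 = \pi^*\alpha$ has primitive $x_1$ (so that $q_1=\mathrm{Re}\,x_1$), not $x_2$. By Definition~\ref{de:induced_harmonic} we have $q_2 = q_1 - u$ where $u$ solves the Dirichlet problem on $\tilde D_2$ with boundary data $q_1|_{\partial\tilde D_2}$, and the local form of $\pi^*\alpha$ tells you nothing about $\partial u$ at a corner of $\partial\tilde D_2$. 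Moreover, a vertex of $\partial\tilde D_2$ is in general \emph{not} a zero of $\pi^*\alpha$: the boundary arcs of $D_2$ are trajectories of whichever quadratic differential makes $D_2$ simple, which is unrelated to $\alpha$, so the coordinate you choose does not straighten those arcs into rays. At a genuine corner the Dirichlet solution $u$ can have unbounded gradient in the ambient coordinates, so the claimed H\"older bound on $\ast dq_2$ is unjustified and the argument for convergence breaks down.

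The paper's argument avoids any direct estimate of $\ast dq_2$ near a vertex. It first passes to a coordinate in which the boundary arcs are rays, then maps the \emph{sector} of $\tilde D_2$ between two adjacent boundary arcs conformally onto a half-disk, carrying both boundary arcs to the diameter. Since $q_2=0$ on both arcs, its transplant vanishes on the diameter and, by Schwarz reflection, extends to a harmonic function on the full disk; thus $\ast d(q_2\circ H^{-1})$ is bounded in these coordinates. Continuity of the transplant of $q_1$ up to the diameter follows from Carath\'eodory's extension theorem, and the integral converges. The limit formula is obtained arc-by-arc, not via Green's identity on an annulus: one shows that small sectors $S_p\subset\overline{\disk}$ abutting $\partial\disk$ are carried by $\pi^{-1}\circ\hat F$ into the same coordinate neighbourhoods, where the integrand is bounded and the limit $r\nearrow 1$ is uniform.
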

 Note that $\pi^{-1}(F(C_r))$ is one or two closed analytic curves.
 This theorem also verifies that Definition \ref{de:integral_with_corners}
 is sensible.

 Now we prove that the module is well-defined. Recall
 that if $\alpha$ is a quadratic differential admissible for a conformal disk
 $D_1$ say, and $g:E_1 \rightarrow D_1$ is a conformal bijection, then the pull back
 $g^* \alpha$ preserves trajectories.
 Thus $\alpha$ is admissible for $D_1$ if and
 only if $g^* \alpha$ is admissible for $E_1$.

 Now assume that $D_2 \subseteq D_1$ and $E_2 \subseteq E_1$ are simple with respect to
 $D_1$ and $E_1$ respectively.  Assume also that $g(E_2)=D_2$.  Let $\beta$ be the one-form
 on $\tilde{D}_1$ such that $\beta^2 = \pi^* (g^*\alpha)$ and $\delta$ be the one-form on
 $\tilde{E}_1$ such that $\delta^2 = \pi^* \alpha$, whose existence is guaranteed by Theorem \ref{th:primitive_is_constant},
 and let $x$ and $y$ be their primitives respectively.
  Let $\tilde{g}:\tilde{D}_1 \rightarrow \tilde{E}_1$  be the lift of
 $g$ to the double cover.   It is immediately seen that if
 $\beta = b(z)dz$ and $\delta = c(z) dz$ in a local coordinate, then $b(z) = c(\tilde{g}(z))\tilde{g}'(z)$
 (possibly after switching the sign of $\delta$).
 Thus $x = y \circ \tilde{g}$.

 Set $q_1=\mbox{Re}(x)$ and $p_1 = \mbox{Re}(y)$, so that $(q_1,q_2)$ and $(p_1,p_2)$ are each a harmonic
 pair (with a definite choice of sign) induced by $(D_1,D_2,g^*\alpha)$ and $(E_1,E_2,\alpha)$ respectively.
 it is clear that if $p_1-p_2$ is harmonic then $p_1 \circ \tilde{g} - p_2 \circ \tilde{g}$ is harmonic.
 Thus since $q_1 = p_1 \circ \tilde{g}$ we have $(q_1,q_2)=(p_1,p_2)$.

 Next, observe that by Proposition \ref{pr:cover_uniqueness}, given two
 distinct covers $\tilde{D}_1$ and $\hat{D}_1$ of $D_1$, there is a
 conformal map $\phi:\tilde{D}_1 \rightarrow \hat{D}_1$.    The integral
 \ref{eq:def_module} is invariant under $\phi$ by (\ref{eq:star_change_o_coord}),
 so the module $m(D_1,D_2,Q(z)dz^2)$ is well-defined.
 Similarly, applying (\ref{eq:star_change_o_coord}) we see that the integral (\ref{eq:def_module})
 is invariant under composition by $\tilde{g}$.  Thus we have
 proven the following theorem.
 \begin{theorem} \label{th:conformal_invariance}
  Let $D_1$ be a conformal disk, and $D_2$ be simple subdomain.  Let $g:D_1 \rightarrow E_1$ be
  a conformal bijection and $E_2= g(D_2)$.  If $\alpha$ is admissible for $E_1$ then
  \[  m(D_1,D_2,g^*\alpha) = m(E_1,E_2,\alpha).  \]
 \end{theorem}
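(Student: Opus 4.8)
The plan is to make precise the chain of identifications sketched in the paragraphs preceding the statement: after lifting $g$ to the adapted covers, the harmonic pair induced by $(D_1,D_2,g^{*}\alpha)$ is nothing but the pull-back under the lift of the harmonic pair induced by $(E_1,E_2,\alpha)$, and the invariance of the module then drops out of formula (\ref{eq:star_change_o_coord}). To set this up, first note that $g^{*}\alpha$ is admissible for $D_1$, so $m(D_1,D_2,g^{*}\alpha)$ is defined: this is the Schwarz-reflection argument recorded after Remark \ref{re:pull-back_trajectory}, using that $g$ is a conformal bijection. Because $g$ is biholomorphic it carries the odd-order zeros and poles of $g^{*}\alpha$ bijectively onto those of $\alpha$; hence if $\pi_{D}:\tilde{D}_1\to D_1$ is a cover adapted to $g^{*}\alpha$ and $\pi_{E}:\tilde{E}_1\to E_1$ is a cover adapted to $\alpha$, the branch loci correspond under $g$ and $g$ admits a conformal lift $\tilde{g}:\tilde{D}_1\to\tilde{E}_1$ with $\pi_{E}\circ\tilde{g}=g\circ\pi_{D}$. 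From this identity $\tilde{g}^{*}(\pi_{E}^{*}\alpha)=\pi_{D}^{*}(g^{*}\alpha)$; comparing square roots (Theorem \ref{th:primitive_is_constant}), if $\delta^{2}=\pi_{E}^{*}\alpha$ and $\beta^{2}=\pi_{D}^{*}(g^{*}\alpha)$ then $\tilde{g}^{*}\delta=\pm\beta$, and after changing the sign of $\delta$ we may assume $\beta=\tilde{g}^{*}\delta$. Therefore the multivalued primitives satisfy $x=y\circ\tilde{g}$ once the additive constants are normalized by $\mathrm{Re}\,x=0$ on $\partial\tilde{D}_1$ and $\mathrm{Re}\,y=0$ on $\partial\tilde{E}_1$, which is consistent since $\tilde{g}$ maps $\partial\tilde{D}_1$ onto $\partial\tilde{E}_1$. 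In particular $q_{1}=p_{1}\circ\tilde{g}$, writing $p_{1}=\mathrm{Re}\,y$.

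Next I would identify the induced harmonic functions on the inner domains. Let $u$ and $u_{E}$ be the solutions of the Dirichlet problems of Definition \ref{de:induced_harmonic} on $\tilde{D}_2=\pi_{D}^{-1}(D_2)$ and $\tilde{E}_2=\pi_{E}^{-1}(E_2)$ with boundary data $q_{1}$ and $p_{1}$. Since $\tilde{g}$ is biholomorphic, $u_{E}\circ\tilde{g}$ is harmonic on $\tilde{D}_2$ and equals $p_{1}\circ\tilde{g}=q_{1}$ on $\partial\tilde{D}_2$, so by uniqueness $u=u_{E}\circ\tilde{g}$ and hence $q_{2}=q_{1}-u=(p_{1}-u_{E})\circ\tilde{g}=p_{2}\circ\tilde{g}$. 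Thus $\pm(q_{1},q_{2})=\pm(p_{1}\circ\tilde{g},\,p_{2}\circ\tilde{g})$, i.e.\ the harmonic pair induced by $(D_1,D_2,g^{*}\alpha)$ is the $\tilde{g}$-pull-back of the one induced by $(E_1,E_2,\alpha)$ (and, by Proposition \ref{pr:induced_qd_wd_and_admissible}, the induced differential on $D_2$ is correspondingly the $g$-pull-back of that on $E_2$).

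It remains to transport the boundary integral (\ref{eq:def_module}). The map $\tilde{g}$ is an orientation-preserving biholomorphism carrying $\partial\tilde{D}_2$ onto $\partial\tilde{E}_2$, so it sends a complete set of maximal boundary arcs $\gamma_{1},\dots,\gamma_{m}$ of $\partial\tilde{D}_2$ (in the sense of Definition \ref{de:integral_with_corners}) onto such a set $\tilde{g}\circ\gamma_{1},\dots,\tilde{g}\circ\gamma_{m}$ for $\partial\tilde{E}_2$, respecting the one-sided/two-sided dichotomy and the orientation and side conventions used to extend $\ast dq_{2}$. Applying (\ref{eq:star_change_o_coord}) on each arc with $h_{1}=p_{1}$, $h_{2}=p_{2}$ and $\tilde{g}$ in place of the conformal map,
\[
 \int_{\gamma_{i}}q_{1}\ast dq_{2}=\int_{\gamma_{i}}(p_{1}\circ\tilde{g})\ast d(p_{2}\circ\tilde{g})=\int_{\tilde{g}\circ\gamma_{i}}p_{1}\ast dp_{2},
\]
and summing over $i$ — with convergence supplied by Theorem \ref{th:convergence_and_nice_approximation}, or equivalently by matching the limits $\lim_{r\nearrow1}\int_{\pi_{D}^{-1}(F(C_{r}))}$ and $\lim_{r\nearrow1}\int_{\pi_{E}^{-1}((g\circ F)(C_{r}))}$, which correspond under $\tilde{g}$ since $g\circ F:\disk\to E_2$ is a conformal bijection — gives $m(D_1,D_2,g^{*}\alpha)=m(E_1,E_2,\alpha)$. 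The same instance of (\ref{eq:star_change_o_coord}), applied to the conformal equivalence between any two adapted covers (Proposition \ref{pr:cover_uniqueness}), shows independence of the chosen covers, so every expression above is well defined.

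I expect the main obstacle to be purely the bookkeeping in the last step: verifying that $\tilde{g}$ is compatible with the combinatorial data of Definition \ref{de:integral_with_corners} — the vertex set and the partition of the boundary into maximal analytic arcs, the labelling of arcs as one- or two-sided, and the induced choice of the side from which the harmonic extension of $\ast dq_{2}$ is taken — and that the normalizations of the additive constants of $x$ and of $y$ are mutually compatible. Everything else is naturality of pull-backs under the lift $\tilde{g}$ together with formula (\ref{eq:star_change_o_coord}).
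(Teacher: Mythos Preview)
Your proposal is correct and follows essentially the same route as the paper: lift $g$ to a biholomorphism $\tilde{g}$ between adapted covers, identify the harmonic pair on $\tilde{D}_1$ as the $\tilde{g}$-pull-back of the one on $\tilde{E}_1$ (square roots via Theorem~\ref{th:primitive_is_constant}, then $q_2=p_2\circ\tilde{g}$ by uniqueness of the Dirichlet solution), and conclude by the change-of-variables identity (\ref{eq:star_change_o_coord}), together with Proposition~\ref{pr:cover_uniqueness} for well-definedness. Your write-up simply makes explicit the bookkeeping (existence of the lift, compatibility with the maximal-boundary-arc decomposition, and the matching of the approximating contours from Theorem~\ref{th:convergence_and_nice_approximation}) that the paper leaves to the reader in the paragraphs immediately preceding the theorem.
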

 In other words, the modules are conformally invariant.

 \begin{remark}[convention for disconnected cover, part two] \label{re:disconnected_doubling}
 In the case that $\alpha$ has no zeros and poles
 of odd order, $\tilde{D}_1$ is disconnected.  In that case there is a meromorphic
 one-form $\beta$ on $D_1$ such that $\beta^2 = \alpha$, and the contour integral defining the module
 reduces to a contour integral over $\partial D_2$
 as follows.  Let $y_1$
 denote the primitive on $D_1$, and $p_1 = \mathrm{Re}(y_1)$; similarly define $p_2$ such
 that $p_1-p_2$ is harmonic and $p_2 =0$ on $\partial D_2$.  It is clear that $(p_1,p_2)$ is
 the restriction to $D_1$ of one of the harmonic pairs $\pm(q_1,q_2)$.  Thus we have
 \[  m(D_1,D_2,\alpha) = 2 \int_{\partial D_2} p_1 \ast d p_2.   \]
 We will see ahead that this special case includes the functionals defined by Nehari.
 \end{remark}
\end{subsection}
\begin{subsection}{Monotonicity theorems}  \label{se:monotonicity}
 In this section we prove the main result of the paper.

 We recall Green's identities, which we will need in their general form on Riemann surfaces (in order to apply
 them on double covers).  Let $u$ and $v$ be functions on a bordered Riemann surface $R$ bounded
 by analytic curves, which are $C^2$ functions on the closure of $R$.  We then have
 \begin{equation} \label{eq:Greens_identity}
   \int_{\partial R} v \ast du = \iint_{R} v \wedge \ast u + \iint_R v \cdot d \ast d u.
 \end{equation}
 which implies another Green's identity
 \begin{equation} \label{eq:Greens_third}
  \int_{\partial R} \left( v \ast du  -u \ast dv \right) = \iint_R \left( v \cdot d \ast d u - u  \cdot d \ast d v   \right).
 \end{equation}
 In local coordinates $z=x+iy$ observe that
 \[  d u \wedge \ast dv = \frac{1}{2i} \left( u_x v_x + u_y v_y  \right) d\bar{z} \wedge dz \]
 so in particular if $u=v$ the integral is the familiar Dirichlet energy of $u$ and must be non-negative.

 If there is a global coordinate $z$ on $R$, denoting infinitesimal arc length by $ds$ and the unit
 outward normal by $n$, these have the form
 \begin{equation*} \label{eq:Greens_identity_local}
   \int_{\partial R} v \frac{\partial u}{\partial n} ds = \iint_{R} \nabla u \cdot \nabla v dA + \iint_R v \triangle u \, dA
 \end{equation*}
 and
 \begin{equation*} \label{eq:Greens_third_local}
  \int_{D} \left( v \frac{\partial u}{\partial n} -u \frac{\partial v }{\partial n} \right)\,ds = \iint_D \left( v \triangle u- u \triangle v   \right)\,dA
 \end{equation*}
 where $dA$ is the measure $d\bar{z} \wedge dz/2i$.
 \begin{lemma} \label{le:Dirichlet_energy_finite}
  Let $D_1$ be a conformal disk and $D_2 \subseteq D_1$ be a simple subdomain.  Let $\alpha$ be a quadratic
  differential admissible for $D_1$, and let $\pm (q_1,q_2)$ be the corresponding harmonic pair on
  Let $D_3$ be any open subset of $D_2$ which contains all of the poles of $\alpha$.  For any double cover
  $\tilde{D}_1$ adapted to $\alpha$,
  \[  \iint_{\tilde{D}_i \backslash \tilde{D}_3} d q_i \wedge \ast dq_i < \infty  \]
  for $i=1,2$.
 \end{lemma}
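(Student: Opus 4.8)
\emph{Proof strategy.} The plan is to reduce to a concrete situation, isolate the finitely many points where $q_i$ can fail to be smooth, and prove local integrability of $|\nabla q_i|^2$ near each. First I would invoke Theorem~\ref{th:conformal_invariance} and Proposition~\ref{pr:cover_uniqueness} (together with the change of variables~\eqref{eq:star_change_o_coord}) to assume $D_1 = \disk$, $\alpha = Q(z)\,dz^2$ with $Q$ rational on $\sphere$, and to fix one cover adapted to $\alpha$; write $\tilde D_3 = \pi^{-1}(D_3)$. Since $dq_i \wedge \ast dq_i = |\nabla q_i|^2\,dA$ is a non-negative measure in any local chart, the assertion reduces to local integrability of $|\nabla q_i|^2$ near each point of the compact set $\overline{\tilde D_i \setminus \tilde D_3}$; and $q_i$ is harmonic in a local chart, hence smooth with bounded gradient, except possibly near (a) the poles of $\pi^*\alpha$, (b) the branch points of $\pi$, and (c) the vertices of $\partial\tilde D_i$. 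Case (a) I would dispose of at once: $\tilde D_3$ is open and contains every pole of $\pi^*\alpha$, so a full neighbourhood of each pole lies in $\tilde D_3$ and no pole belongs to $\overline{\tilde D_i \setminus \tilde D_3}$.

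For (b), let $b$ be a branch point in $\overline{\tilde D_i \setminus \tilde D_3}$; then $\pi(b)$ is an odd-order zero of $\alpha$, not a pole, by (a). By Theorem~\ref{th:primitive_is_constant}(1) the one-form $\beta$ extends holomorphically across $b$, so near $b$ one has $\beta = \zeta^{m}\sqrt{h(\zeta)}\,d\zeta$ with $m\ge 0$ and $h$ holomorphic, $h(0)\neq 0$; this form has vanishing residue, its primitive $x_1$ is single-valued and holomorphic near $b$, and hence $q_1 = \mathrm{Re}\,x_1$ is real-analytic at $b$ with bounded gradient. Because the corrector $u$ of Definition~\ref{de:induced_harmonic} is harmonic on all of $\tilde D_2$, the same holds for $q_2 = q_1 - u$ whenever $b\in\tilde D_2$. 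So branch points contribute nothing problematic.

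For (c), away from vertices $\partial\tilde D_i$ is a single analytic arc; since $q_1\equiv 0$ on $\partial\tilde D_1$ and $q_2\equiv 0$ on $\partial\tilde D_2$, Schwarz reflection extends $q_i$ real-analytically across such an arc with bounded gradient (along the part of $\partial\tilde D_2$ that is interior to $\tilde D_1$, and away from the excluded poles, $q_1$ is simply harmonic, again with bounded gradient there). At a vertex $v$ I would use Theorem~\ref{th:boundary_arcs_vertices} to see that in a small coordinate ball $B$ about $v$ the set $\tilde D_i\cap B$ is a finite union of circular sectors, each of opening $\theta\le 2\pi$, on each of which $q_i$ is harmonic, continuous up to $\partial\tilde D_i$, and identically zero on the two bounding analytic arcs. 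A standard reflection / corner estimate then gives $|\nabla q_i| = O(r^{\delta-1})$ near $v$ with $\delta = \pi/\theta \ge \tfrac12 > 0$, so that $\iint_{B}|\nabla q_i|^2\,dA = O\!\big(\int_0^{\varepsilon} r^{2\delta-1}\,dr\big) < \infty$. Finitely many vertices and sectors yield a finite total.

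Assembling these observations: $|\nabla q_i|^2\,dA$ is a non-negative measure, bounded near every point of $\overline{\tilde D_i\setminus\tilde D_3}$ except the finitely many vertices, where it is integrably singular, while the poles — the only remaining candidate singularities — have been removed; hence $\iint_{\tilde D_i\setminus\tilde D_3} dq_i\wedge\ast dq_i < \infty$ for $i=1,2$. I expect step (c) to be the main obstacle: one must extract the precise local geometry at a vertex from Theorem~\ref{th:boundary_arcs_vertices} and verify that $q_i$ really does vanish on \emph{every} arc meeting the vertex, so that the reflection estimate on $|\nabla q_i|$ is legitimate; the bookkeeping in (a), (b) and the reductions in the first paragraph are routine.
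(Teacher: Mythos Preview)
Your argument is correct, but it takes a genuinely different route from the paper. The paper does not localise at all: it fixes a smoothly bounded $\Omega\Subset\tilde D_3$ containing all singularities of $q_i$, applies Green's identity on $\pi^{-1}F(\{|z|<r\})\setminus\Omega$ to write the Dirichlet integral there as $\int_{\pi^{-1}F(C_r)} q_i\ast dq_i - \int_{\partial\Omega} q_i\ast dq_i$, and then lets $r\nearrow 1$; the first term tends to $\int_{\partial\tilde D_i} q_i\ast dq_i = 0$ by Theorem~\ref{th:convergence_and_nice_approximation} (applied with the pair $(q_i,q_i)$, using Proposition~\ref{pr:induced_qd_wd_and_admissible} for $i=2$), while the second is manifestly finite. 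So the paper's proof is four lines long but leans entirely on the appendix result. Your approach is longer and more hands-on---classifying the finitely many bad points and invoking a corner estimate $|\nabla q_i|=O(r^{\pi/\theta-1})$ at each vertex---but it is self-contained (it does not need Theorem~\ref{th:convergence_and_nice_approximation}) and yields pointwise control on $\nabla q_i$, not just integrability. One small remark on your step~(c): for $i=1$ there are in fact no geometric corners, since $\partial\tilde D_1$ is a smooth analytic curve once $D_1$ is identified with $\disk$; the ``vertices'' there are only zeros of $\alpha$ on $\partial\disk$, and near such a zero $|\beta|\sim|Q|^{1/2}$ is bounded, so no corner estimate is needed for $q_1$. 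The corner argument is genuinely required only for $q_2$ at vertices of $\partial\tilde D_2$, and there your bound $\theta\le 2\pi$, hence $\delta\ge 1/2$, is exactly what makes $r^{2\delta-1}$ integrable.
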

 \begin{proof}
  Fix $i$.
  Let $\Omega$ be a domain bounded by analytic Jordan curves, whose closure is contained in
  $\tilde{D}_3$ and which contains all the singularities of $q_i$.
  Let $F:\disk \rightarrow D_i$ be a conformal bijection and $C_r$ the curve $|z|=r$
  with positive orientation.  By  Green's identity (\ref{eq:Greens_identity})
  we have
  \begin{align*}
   \iint_{\tilde{D}_i \backslash \tilde{D}_3} d q_i \ast d q_i  &
    \leq \iint_{\tilde{D}_i \backslash \Omega} d q_i \ast d q_i =
   \lim_{r \nearrow 1} \iint_{\pi^{-1} \circ F(\{ |z|<r \})  \backslash \Omega} d q_i \ast d q_i \\
   & = \lim_{r \nearrow 1} \int_{\pi^{-1} \circ F(C_r)} q_i \ast d q_i -
    \int_{\partial \Omega} q_i \ast d q_i
   \\ &  = - \int_{\partial \Omega} q_i \ast d q_i
  \end{align*}
  where the last equality follows from Theorem \ref{th:convergence_and_nice_approximation}.  Since $q_i$ is harmonic
  on an open set containing $\partial \Omega$ the final integral exists.
 \end{proof}

 \begin{theorem}[Positivity] \label{th:positivity} Let $D_1$ be a conformal disk and $D_2 \subseteq D_1$ a simple subdomain,
 and let $\alpha$ be admissible for $D_1$.  Then
  \[ m(D_1,D_2,\alpha) \leq 0.  \]
 \end{theorem}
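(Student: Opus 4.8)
The plan is to substitute $q_2 = q_1 - u$ into the integral (\ref{eq:def_module}) defining $m(D_1,D_2,\alpha)$ and to rewrite the module as the negative of a sum of two Dirichlet integrals, which makes the sign manifest. Recall from Definition \ref{de:induced_harmonic} that $u$ is harmonic and nonsingular on $\tilde{D}_2$ with $u = q_1$ on $\partial\tilde{D}_2$, that $q_2 = q_1 - u$ on $\tilde{D}_2$ (so that $q_2 = 0$ on $\partial\tilde{D}_2$), and that $q_1$ is harmonic on $\tilde{D}_1$ away from the poles of $\alpha$, all of which lie in $D_2$; in particular $q_1$ is harmonic and nonsingular on $\tilde{D}_1\setminus\tilde{D}_2$, and $q_1 = 0$ on $\partial\tilde{D}_1$.

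Along $\partial\tilde{D}_2$ one has $\ast dq_2 = \ast dq_1 - \ast du$ on each side, and since each two-sided boundary arc of $\partial\tilde{D}_2$ lies in the interior of $\tilde{D}_1$, where $q_1$ is harmonic and nonsingular, the $\ast dq_1$-part cancels between the two oppositely-oriented copies of the arc occurring in Definition \ref{de:integral_with_corners}; hence
\[ m(D_1,D_2,\alpha) = \int_{\partial\tilde{D}_2} q_1 \ast dq_1 - \int_{\partial\tilde{D}_2} q_1 \ast du . \]
In the second integral I would use $q_1 = u$ on $\partial\tilde{D}_2$ and Green's identity (\ref{eq:Greens_identity}) on $\tilde{D}_2$, where $u$ is harmonic and nonsingular, to get $\int_{\partial\tilde{D}_2} q_1 \ast du = \int_{\partial\tilde{D}_2} u \ast du = \iint_{\tilde{D}_2} du \wedge \ast du \ge 0$. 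In the first integral I would apply (\ref{eq:Greens_identity}) on $\tilde{D}_1\setminus\overline{\tilde{D}_2}$, where $q_1$ is harmonic and nonsingular: the one-sided arcs of $\partial\tilde{D}_2$ enter $\partial(\tilde{D}_1\setminus\overline{\tilde{D}_2})$ with the reversed orientation, the two-sided arcs again cancel in pairs, and $q_1 = 0$ on $\partial\tilde{D}_1$, so that $\int_{\partial\tilde{D}_2} q_1 \ast dq_1 = -\iint_{\tilde{D}_1\setminus\tilde{D}_2} dq_1 \wedge \ast dq_1 \le 0$. Combining these two computations,
\[ m(D_1,D_2,\alpha) = -\iint_{\tilde{D}_1\setminus\tilde{D}_2} dq_1 \wedge \ast dq_1 - \iint_{\tilde{D}_2} du \wedge \ast du \le 0 , \]
and when $D_2 = D_1$ both terms on the right vanish ($u\equiv 0$ and $\tilde{D}_1\setminus\tilde{D}_2 = \emptyset$), so that $m(D_1,D_1,\alpha) = 0$, as the statement asserts.

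The main obstacle is purely one of regularity: neither $\partial\tilde{D}_2$ nor $\partial(\tilde{D}_1\setminus\overline{\tilde{D}_2})$ is a smooth closed curve — it carries the vertices of Theorem \ref{th:boundary_arcs_vertices} and possibly slits — so (\ref{eq:Greens_identity}) cannot be invoked verbatim. I would circumvent this by replacing $\partial\tilde{D}_2$ throughout by the smooth analytic curve(s) $\pi^{-1}(F(C_r))$ of Theorem \ref{th:convergence_and_nice_approximation}, applying (\ref{eq:Greens_identity}) on the exhausting subdomains (and, for the outer term, on the annular region bounded by $\pi^{-1}(F(C_r))$ and $\pi^{-1}(G(C_\rho))$ for a conformal bijection $G:\disk\to D_1$ and $C_\rho$ the circle $|z|=\rho$), and then letting $r,\rho\nearrow 1$. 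The boundary integrals converge to the ones above by Theorem \ref{th:convergence_and_nice_approximation}; the integral over $\pi^{-1}(G(C_\rho))$ tends to $0$ since $q_1$ extends harmonically across $\partial\tilde{D}_1$ and vanishes there; and the two Dirichlet integrals converge monotonically once their finiteness is known. Here $\iint_{\tilde{D}_1\setminus\tilde{D}_2} dq_1 \wedge \ast dq_1 < \infty$ is Lemma \ref{le:Dirichlet_energy_finite}, while $\iint_{\tilde{D}_2} du \wedge \ast du < \infty$ follows by writing $u = q_1 - q_2$ on $\tilde{D}_2\setminus\tilde{D}_3$ for a relatively compact neighbourhood $\tilde{D}_3$ of the poles, applying the triangle inequality for the Dirichlet seminorm and Lemma \ref{le:Dirichlet_energy_finite} to $q_1$ and $q_2$ there, and using the smoothness of $u$ on $\overline{\tilde{D}_3}$. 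Finally, when $\alpha$ has no zeros or poles of odd order the cover $\tilde{D}_1$ is disconnected, and the identical computation applies on each of its two sheets — equivalently, one may argue directly in $D_1$ via the reduction in Remark \ref{re:disconnected_doubling}.
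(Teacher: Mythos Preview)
Your argument is correct and essentially identical to the paper's: the paper packages your two Dirichlet integrals into a single Dirichlet energy $\iint_{\tilde{D}_1} dw\wedge\ast dw$ of the auxiliary function $w$ equal to $q_1$ on $\tilde{D}_1\setminus\tilde{D}_2$ and to $q_1-q_2\,(=u)$ on $\tilde{D}_2$, and then derives the same identity $-m=\iint_{\tilde{D}_1\setminus\tilde{D}_2} dq_1\wedge\ast dq_1+\iint_{\tilde{D}_2} du\wedge\ast du$ via Green's identity. The regularity issue is handled the same way in both, by replacing $\partial\tilde{D}_i$ with the smooth level curves $\pi^{-1}(F_i(C_r))$ and passing to the limit using Theorem~\ref{th:convergence_and_nice_approximation} and Lemma~\ref{le:Dirichlet_energy_finite}.
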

 \begin{proof}  Assume for now that $D_2$ is bounded by an analytic curve in $D_1$.
  Let $(q_1,q_2)$ be one of the harmonic pairs induced by $(D_1,D_2,\alpha)$.   Finally let
  \[   w(p) = \left\{ \begin{array}{cc} q_1(p) & p \in \tilde{D}_1 \backslash \tilde{D}_2 \\
       q_1(p)-q_2(p) & p \in \tilde{D}_2. \end{array} \right.  \]
  Observe that this is well-defined since $q_1 - q_2$ is single-valued and $q_1$
  is single-valued on $\tilde{D}_1 \backslash \tilde{D}_2$ by Theorem \ref{th:primitive_is_constant}.
  In that case, using Green's identity (\ref{eq:Greens_identity}),
  \begin{eqnarray} \label{eq:positivity}
   \iint_{\tilde{D}_1} dw \wedge \ast dw & = & \iint_{\tilde{D}_1 \backslash \tilde{D}_2} dw \wedge \ast dw
    + \iint_{\tilde{D}_2} dw \wedge \ast dw   \nonumber \\
    & = & \int_{\partial \tilde{D}_1} q_1 \ast dq_1 -
    \int_{\partial \tilde{D}_2} q_1 \ast dq_1
    + \int_{\partial \tilde{D}_2} (q_1 - q_2) \ast d( q_1 - q_2)
    \nonumber \\ & = & - \int_{\partial \tilde{D}_2} q_1 \ast dq_2.
  \end{eqnarray}
  Since the left hand side is greater than or equal to zero this
  completes the proof in the case that the boundary of $D_2$ is analytic.

  For the general case, let $F_i:\disk \rightarrow D_i$ be conformal bijections for $i=1,2$.  In the computation
  above replace
  $\partial \tilde{D}_i$ with $\pi^{-1}F_i(C_r)$, replace
  $\tilde{D}_1 \backslash \tilde{D}_2$ with the region
  bounded by $\pi^{-1} \circ F_1(C_r)$ and $\pi^{-1} \circ F_2(C_r)$, and $\tilde{D}_2$
  by $\pi^{-1} \circ F_2(\{|z|<r \})$.  Letting $r \nearrow 1$ and
  applying Theorem \ref{th:convergence_and_nice_approximation} and
  Lemma \ref{le:Dirichlet_energy_finite} completes the proof.
 \end{proof}

 We also have the following theorem, which says that the modules have a kind of transitivity property.
 \begin{theorem} \label{th:monotonicity_with_remainder}  Let $D$ be a conformal disk
 and let $D_1$ and $D_2$ be simple domains such that
  $D_2 \subseteq D_1 \subseteq D$.  Let $\alpha$ be a quadratic differential which is admissible for
  $D$ and assume that all of the poles of $\alpha$ are contained in $D_2$.
  Let $\pm(q,q_i)$ be the harmonic pairs induced by $(D,D_i,\alpha)$ for $i=1,2$.
  Then
  \[  m(D,D_2,\alpha) - m(D,D_1,\alpha) = \int_{\partial \tilde{D}_2} q_1 \ast dq_2.  \]
  Thus if $\alpha_1$ is the quadratic differential on $D_1$ induced by $\alpha$ then
  \[  m(D,D_2,\alpha) - m(D,D_1,\alpha) = m(D_1,D_2,\alpha_1).  \]
 \end{theorem}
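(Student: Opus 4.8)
The plan is to expand the right-hand side $\int_{\partial \tilde{D}_2} q_1 \ast dq_2$ using the two Green's identities and the harmonic pair structure, and to recognize the pieces as the two modules on the left. Write $w_i = q - q_i$ on $\tilde{D}_i$ for $i=1,2$, extended by $q$ on $\tilde{D}\backslash\tilde{D}_i$, exactly as in the proof of Theorem \ref{th:positivity}; by Theorem \ref{th:primitive_is_constant} each $w_i$ is a genuine single-valued $C^2$ function away from the poles of $\alpha$, all of which lie in $D_2 \subseteq D_1$. The key algebraic identity to exploit is $q_2 - q_1 = (q - q_1) - (q - q_2) = w_1 - w_2$ on $\tilde{D}_2$, together with the fact that $q_2$ vanishes on $\partial \tilde{D}_2$ and $q_1$ vanishes on $\partial \tilde{D}_1$ (by the normalization of the additive constants).

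First I would reduce to the case that $\partial D_1$ and $\partial D_2$ are analytic curves, as in Theorem \ref{th:positivity}: replace each $\partial \tilde{D}_i$ by $\pi^{-1}(F_i(C_r))$, do the computation there, and pass to the limit $r \nearrow 1$ invoking Theorem \ref{th:convergence_and_nice_approximation} and Lemma \ref{le:Dirichlet_energy_finite} to control convergence of the boundary integrals and finiteness of the Dirichlet energies on $\tilde{D}_i \backslash \tilde{D}_3$ for $\tilde{D}_3$ a small neighborhood of the poles. Then, on the analytic case, apply Green's identity (\ref{eq:Greens_identity}) to $\iint_{\tilde{D}_1} d w_1 \wedge \ast d w_1$, splitting the integral over $\tilde{D}_1$ as $(\tilde{D}_1 \backslash \tilde{D}_2) \cup \tilde{D}_2$ just as in (\ref{eq:positivity}): the first region contributes $\int_{\partial \tilde{D}_1} q \ast dq - \int_{\partial \tilde{D}_2} q \ast dq$ minus the correction coming from $q_1$ on $\partial \tilde{D}_1$, and using $q_1 = 0$ there this telescopes. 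Carrying this out carefully for the three functions $q$, $q_1$, $q_2$ — using that $q - q_1$ is harmonic on $\tilde{D}_1$, that $q - q_2$ is harmonic on $\tilde{D}_2$, and that $q_1 = 0$ on $\partial\tilde{D}_1$, $q_2 = 0$ on $\partial\tilde{D}_2$ — one obtains $\int_{\partial \tilde{D}_2} q \ast dq_2 - \int_{\partial \tilde{D}_2} q \ast dq_1 = \int_{\partial \tilde{D}_2} q_1 \ast dq_2$, which by Definition \ref{de:general_module} is exactly $m(D,D_2,\alpha) - m(D,D_1,\alpha)$. The symmetric Green's identity (\ref{eq:Greens_third}) is useful here to swap $\int_{\partial \tilde{D}_2} q \ast dq_1$ against $\int_{\partial \tilde{D}_2} q_1 \ast dq$, since $q$ and $q_1$ are both harmonic in a neighborhood of $\partial \tilde{D}_2$ (the poles being in $D_2$), so the area terms vanish.

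The second statement is then immediate from Proposition \ref{pr:induced_qd_wd_and_admissible} and Proposition \ref{pr:induced_things_are_modules}: by those results $q_1$ is precisely the singular harmonic function on $\tilde{D}_1$ attached to the induced differential $\alpha_1$ via Theorem \ref{th:primitive_is_constant}, and $\pm(q_1,q_2)$ is exactly the harmonic pair induced by $(D_1,D_2,\alpha_1)$; hence $\int_{\partial \tilde{D}_2} q_1 \ast dq_2 = m(D_1,D_2,\alpha_1)$ by definition. The main obstacle I anticipate is purely bookkeeping rather than conceptual: making sure the limiting argument over $\pi^{-1}(F_i(C_r))$ is legitimate simultaneously for the three functions $q,q_1,q_2$ (which have different singularity sets, though all contained in $D_2$) and that the ``corners'' subtlety of Definition \ref{de:integral_with_corners} is respected — i.e. that each boundary integral is interpreted via a complete set of maximal boundary arcs with the correct one-sided extensions of $\ast dq_2$. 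Once the analytic-boundary case is settled, Theorem \ref{th:convergence_and_nice_approximation} hands us the general case with no further work.
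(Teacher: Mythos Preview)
Your overall strategy---reduce to analytic boundaries via Theorem \ref{th:convergence_and_nice_approximation}, then manipulate boundary integrals using the two Green's identities (\ref{eq:Greens_identity}), (\ref{eq:Greens_third}) together with $q_1=0$ on $\partial\tilde{D}_1$ and $q_2=0$ on $\partial\tilde{D}_2$, and finally invoke Proposition \ref{pr:induced_things_are_modules} for the second claim---is exactly the paper's approach. The detour through the Dirichlet integral $\iint_{\tilde{D}_1} dw_1\wedge\ast dw_1$ is, however, a red herring here: since $w_1=q-q_1$ is harmonic on all of $\tilde{D}_1$, splitting that integral over $\tilde{D}_1\backslash\tilde{D}_2$ and $\tilde{D}_2$ gives nothing new; that trick was useful for positivity, not for the present equality.

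There is a concrete error in the algebraic core. You write
\[
\int_{\partial \tilde{D}_2} q \ast dq_2 - \int_{\partial \tilde{D}_2} q \ast dq_1 = \int_{\partial \tilde{D}_2} q_1 \ast dq_2
\]
and then identify the left side with $m(D,D_2,\alpha)-m(D,D_1,\alpha)$ ``by Definition \ref{de:general_module}''. But $m(D,D_1,\alpha)=\int_{\partial\tilde{D}_1} q\ast dq_1$, an integral over $\partial\tilde{D}_1$, not $\partial\tilde{D}_2$. These are not equal in general: applying (\ref{eq:Greens_third}) to the harmonic functions $q,q_1$ on the annulus $\tilde{D}_1\backslash\tilde{D}_2$, together with $q_1=0$ on $\partial\tilde{D}_1$, gives
\[
\int_{\partial\tilde{D}_1} q\ast dq_1 \;=\; \int_{\partial\tilde{D}_2} q\ast dq_1 \;-\; \int_{\partial\tilde{D}_2} q_1\ast dq,
\]
with an extra term. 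Likewise your displayed identity is off by the same term: applying (\ref{eq:Greens_third}) on $\tilde{D}_2$ to the harmonic functions $q-q_2$ and $q_1-q_2$ yields
\[
\int_{\partial \tilde{D}_2} q \ast dq_2 - \int_{\partial \tilde{D}_2} q \ast dq_1 = \int_{\partial \tilde{D}_2} q_1 \ast dq_2 \;-\; \int_{\partial \tilde{D}_2} q_1 \ast dq.
\]
Your two errors happen to cancel, so the final conclusion is salvageable, but the sketch as written is wrong at both steps. Also note that you cannot ``swap $\int_{\partial\tilde{D}_2} q\ast dq_1$ against $\int_{\partial\tilde{D}_2} q_1\ast dq$'' merely because $q,q_1$ are harmonic \emph{near} $\partial\tilde{D}_2$: Green's identity needs harmonicity on an entire bounded region, and on any such region (either $\tilde{D}_2$ or the annulus) you pick up either singularities or an extra boundary component. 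The paper handles this by working on the annulus and on $\tilde{D}_2$ with the \emph{differences} $q_1-q$ and $q_1-q_2$, which are genuinely harmonic there; that is the bookkeeping you are missing.
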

 \begin{proof}  Assuming that $D_2$ is bounded by an analytic curve, we compute
  \begin{eqnarray*}
   I & = & - m(D,D_1,\alpha) -\int_{\partial \tilde{D}_2} q_1 \ast dq_2 + m(D,D_2,\alpha)  \\
    & = & - \int_{\partial \tilde{D}_1} q  \ast d q_1
    - \int_{\partial \tilde{D}_2} q_1 \ast dq_2 + \int_{\partial \tilde{D}_2} q \ast dq_2 \\
    & = & \int_{\partial \tilde{D}_1} \left(q_1 - q \right) \ast dq_1
    - \int_{\partial \tilde{D}_2} \left(q_1-q \right)  \ast dq_2
  \end{eqnarray*}
  where we have used the fact that $q_1=0$ on $\partial \tilde{D}_1$.
  By Green's identity (\ref{eq:Greens_third})
  \begin{eqnarray*}
   \int_{\partial \tilde{D}_1} (q_1 - q)  \ast dq_1 & = &
   \int_{\partial \tilde{D}_1} (q_1 - q)  \ast dq_1
   - \int_{\partial \tilde{D}_1} q_1 \ast d (q_1 -q) \\
   & = & \int_{\partial \tilde{D}_2} (q_1 - q) \ast dq_1
   - \int_{\partial \tilde{D}_2} q_1 \ast d(q_1 - q)
  \end{eqnarray*}
  since $q_1-q$ and $q_1$ are harmonic on $\tilde{D}_1 \backslash \tilde{D}_2$.
  So
  \begin{eqnarray*}
   I & = & \int_{\partial \tilde{D}_2} \left( q_1 - q \right) \ast dq_1
   - \int_{\partial \tilde{D}_2} q_1\ast d(q_1-q)  - \int_{\partial \tilde{D}_2} \left( q_1 - q \right)
   \ast dq_2  \\ & = &
   \int_{\partial \tilde{D}_2} (q_1 - q) \ast d(q_1 - q_2)
    - \int_{\partial \tilde{D}_2} (q_1 - q_2) \ast d(q_1-q) =0
  \end{eqnarray*}
  by Green's identity.
  The general case is handled by approximating with analytic curves as in the proof of Theorem \ref{th:positivity}.

  The final claim follows from Proposition \ref{pr:induced_things_are_modules}.
 \end{proof}
  Theorems \ref{th:positivity} and \ref{th:monotonicity_with_remainder} immediately imply that the
 higher-order reduced modules are monotonic in the following sense.
 \begin{corollary}[Monotonicity] \label{co:monotonicity}
  If $D$, $D_1$ and $D_2$ are simple domains satisfying $D_2 \subseteq D_1 \subseteq D$ and $\alpha$ is a quadratic differential
  admissible for $D$ then
  \[  m(D,D_2,\alpha) \leq m(D,D_1,\alpha).  \]
 \end{corollary}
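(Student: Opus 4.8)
The plan is to deduce the inequality directly from the two results just established, namely the remainder identity of Theorem~\ref{th:monotonicity_with_remainder} together with the sign bound of Theorem~\ref{th:positivity}. Before applying them one must note that both $m(D,D_1,\alpha)$ and $m(D,D_2,\alpha)$ are actually defined in the first place: by Definition~\ref{de:general_module}, which rests on the harmonic pair of Definition~\ref{de:induced_harmonic}, this forces every pole of $\alpha$ lying in $D$ to lie in $D_1$, and similarly in $D_2$; since $D_2 \subseteq D_1$ this is the single condition that all such poles lie in $D_2$. That is precisely the hypothesis under which Theorem~\ref{th:monotonicity_with_remainder} is stated, so nothing extra needs to be assumed.

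Granting this, I would argue in three short steps. First, apply Theorem~\ref{th:monotonicity_with_remainder} to the triple $D_2 \subseteq D_1 \subseteq D$, which yields
\[ m(D,D_2,\alpha) - m(D,D_1,\alpha) = m(D_1,D_2,\alpha_1), \]
where $\alpha_1$ is the quadratic differential on $D_1$ induced by $\alpha$. Second, recall from Proposition~\ref{pr:induced_qd_wd_and_admissible} that $\alpha_1$ is a well-defined quadratic differential on $D_1$ which is admissible for $D_1$; hence the module on the right-hand side falls within the scope of Theorem~\ref{th:positivity}, which gives $m(D_1,D_2,\alpha_1) \le 0$. Third, combine the two displays to conclude $m(D,D_2,\alpha) \le m(D,D_1,\alpha)$, which is the assertion.

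I do not anticipate a genuine obstacle here: all of the analytic work — convergence of the contour integrals, the Green's-identity computations, and the passage to analytic boundary curves — has already been carried out inside Theorems~\ref{th:positivity} and~\ref{th:monotonicity_with_remainder}, and the admissibility and well-definedness of the induced differential $\alpha_1$ is supplied by Proposition~\ref{pr:induced_qd_wd_and_admissible}. The only things to keep an eye on are bookkeeping points already folded into the hypotheses and conclusion of Theorem~\ref{th:monotonicity_with_remainder} — in particular that $D_2$ is simple with respect to $D_1$, so that $m(D_1,D_2,\alpha_1)$ is meaningful, and that $\alpha_1$ retains all its poles inside $D_2$ — so in effect the corollary is a one-line consequence of the two theorems.
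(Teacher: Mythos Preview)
Your argument is correct and matches the paper's own proof, which simply states that the corollary follows immediately from Theorems~\ref{th:positivity} and~\ref{th:monotonicity_with_remainder}. Your additional remarks about the poles lying in $D_2$ and the admissibility of $\alpha_1$ via Proposition~\ref{pr:induced_qd_wd_and_admissible} are reasonable bookkeeping clarifications, but the core reasoning is identical.
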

\end{subsection}
\begin{subsection}{The case of equality}
 Equality in Theorem \ref{th:positivity} occurs only if $\alpha$ is also admissible for
 $D_2$.
 \begin{theorem}  \label{th:equality}Equality occurs in Theorem \ref{th:positivity} if and only if
  $D_1 \backslash D_2$ consists of a network of trajectories of $\alpha$.
 \end{theorem}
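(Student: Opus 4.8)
The plan is to trace through the proof of Theorem~\ref{th:positivity} and extract the condition under which the inequality $\iint_{\tilde D_1} dw \wedge \ast dw \geq 0$ becomes an equality. Recall from \eqref{eq:positivity} that
\[
  \iint_{\tilde D_1} dw \wedge \ast dw = - m(D_1,D_2,\alpha),
\]
where $w$ equals $q_1$ on $\tilde D_1 \backslash \tilde D_2$ and $q_1 - q_2 = u$ on $\tilde D_2$ (with $u$ the harmonic extension of the boundary values of $q_1$). Since $dw \wedge \ast dw$ is pointwise the Dirichlet density $\tfrac{1}{2i}(w_x^2 + w_y^2)\, d\bar z \wedge dz \geq 0$, equality in Theorem~\ref{th:positivity} holds if and only if $dw \equiv 0$ on $\tilde D_1$, i.e. $w$ is locally constant. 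First I would argue that $w$ is continuous across $\partial \tilde D_2$: on $\partial \tilde D_2$ we have $q_2 = 0$ (by construction of the induced pair), so $w = q_1$ from both sides; hence $w$ is a single continuous function on the connected surface $\tilde D_1$ (or on each of its two sheets, in the disconnected case, where the argument is identical). Thus $w$ being locally constant forces $w$ to be globally constant, and since $w = q_1 = 0$ on $\partial \tilde D_1$, we get $w \equiv 0$.

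Now I would unpack what $w \equiv 0$ says. On $\tilde D_1 \backslash \tilde D_2$ it gives $q_1 \equiv 0$, i.e. $\mathrm{Re}(x_1)$ is constant on the entire region $\tilde D_1 \backslash \tilde D_2$. By Proposition~\ref{pr:trajectories_and_constants}, a curve $\gamma$ in $D_1$ is a trajectory of $\alpha$ precisely when $\mathrm{Re}(x_1)$ is constant along (any lift of) $\gamma$; and since $q_1$ is constant on an open-in-$\partial$ region, every smooth arc in $\pi(\tilde D_1 \backslash \tilde D_2) = D_1 \backslash D_2$ along which $\mathrm{Re}(x_1)$ has nonzero gradient direction lying in the level set is a trajectory. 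More carefully: $\mathrm{Re}(x_1) \equiv 0$ on $\tilde D_1 \backslash \tilde D_2$ means the whole region (minus the finitely many zeros of $\beta = \partial x_1$, where $\mathrm{Re}(x_1)$ can be constant without contradiction) lies in a single level set $\{\mathrm{Re}(x_1) = 0\}$. Away from zeros of $\beta$, such a level set is a union of analytic arcs each of which is a trajectory of $\alpha$ in the sense of \eqref{eq:trajectory_condition}; including the critical points where trajectories meet, this is exactly the statement that $D_1 \backslash D_2$ is a network of trajectories. For the converse: if $D_1 \backslash D_2$ is a network of trajectories, then by Proposition~\ref{pr:trajectories_and_constants} $q_1 = \mathrm{Re}(x_1)$ is locally constant on $\pi^{-1}(D_1 \backslash D_2)$; being continuous and equal to $0$ on $\partial \tilde D_1$, it is identically $0$ there, so $q_2 = q_1$ on $\partial \tilde D_2$, forcing $u \equiv 0$ and hence $w \equiv 0$ and $m(D_1,D_2,\alpha) = 0$.

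The main obstacle I anticipate is the bookkeeping around the non-analytic boundary of $D_2$ and the region $D_1 \backslash D_2$ having measure-zero degeneracies. Specifically: (i) the identity \eqref{eq:positivity} was first established only for $D_2$ bounded by a single analytic curve, then extended by the $C_r$-exhaustion argument of Theorem~\ref{th:convergence_and_nice_approximation}; I need to check that in the limiting argument, $\iint_{\tilde D_1} dw \wedge \ast dw = 0$ still forces $dw = 0$ a.e., which it does since the integrand is nonnegative and the exhausting integrals increase to it. (ii) I must handle the possibility that $D_1 \backslash D_2$ is "thin" — e.g. a union of slits — in which case "$D_1 \backslash D_2$ consists of a network of trajectories" should be interpreted as: each component of $D_1 \backslash D_2$, as it sits in $D_1$, is contained in the closure of a trajectory network. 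The hypothesis that $D_2$ is simple (Definition of simple domain) together with Theorem~\ref{th:boundary_arcs_vertices} guarantees $\partial D_2$ is already a finite union of analytic arcs meeting at vertices, which controls this. (iii) In the disconnected-cover case, the convention of Remark~\ref{re:harmonic_pair_convention} ensures $q_1$ changes sign between sheets, so $w \equiv 0$ on one sheet iff on the other; no new content arises. I would present these points briefly and lean on the cited theorems rather than re-proving the convergence machinery.
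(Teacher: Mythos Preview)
Your forward direction is correct and follows essentially the paper's route. The paper splits the vanishing of $\iint_{\tilde D_1}|\nabla w|^2$ into the two regions and treats them separately (from $\iint_{\tilde D_1\setminus\tilde D_2}|\nabla q_1|^2=0$ together with non-constancy of $q_1$ it deduces the region has measure zero; from $\iint_{\tilde D_2}|\nabla(q_1-q_2)|^2=0$ it deduces $q_1=q_2$), whereas you argue globally that $w$, being continuous across $\partial\tilde D_2$ and locally constant, is identically zero. Both arguments arrive at $q_1=0$ on the boundary arcs and finish with Proposition~\ref{pr:trajectories_and_constants}.

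The converse, however, has a genuine gap. From ``$D_1\setminus D_2$ is a network of trajectories'' Proposition~\ref{pr:trajectories_and_constants} yields only that $q_1$ is \emph{constant} on each lifted arc; your step ``being continuous and equal to $0$ on $\partial\tilde D_1$, it is identically $0$ there'' tacitly assumes every component of the network is connected to $\partial D_1$. This is not automatic. For instance, take $D_1=\disk$, $\alpha=dz^2/z^2$, and let $D_2$ be $\disk$ minus the semicircular arc $\{|z|=\tfrac12,\ 0\le\arg z\le\pi\}$: this arc is a trajectory, $D_2$ is simple and contains the pole at $0$, yet $q_1=\log|z|=-\log 2\neq 0$ on the arc, so $u\not\equiv 0$ and $m(D_1,D_2,\alpha)<0$. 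The paper's own proof does not spell out the converse either; what the argument there actually characterizes is that $D_1\setminus D_2$ lies on the level set $\{q_1=0\}$, a condition strictly stronger than ``union of trajectories'' in the sense of the paper's definition.
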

 \begin{proof}
  By equation (\ref{eq:positivity}) equality holds if and only if
  \[  \iint_{\tilde{D}_1} |\nabla w|^2\,dA = 0. \]
  So in particular we have that
  \[  \iint_{\tilde{D}_1 \backslash \tilde{D}_2} |\nabla q_1|^2\,dA  =0; \]
  since $q_1$ is not constant this can only occur if $\tilde{D}_1 \backslash \tilde{D}_2$
  has measure zero.  We already know that $\tilde{D}_2$ is bounded by a network of analytic
  arcs, since $D_2$ is simple.  Thus $\tilde{D}_1 \backslash \tilde{D}_2$
  is a network of a finite number of analytic arcs $\gamma_1,\ldots,\gamma_k$.  We must also have that
  \[  \iint_{\tilde{D}_2} |\nabla (q_1 - q_2)|^2\,dA  =0   \]
  which can only occur if $q_1 - q_2$ is constant (and hence zero, since
  $\partial \tilde{D}_1$ and $\partial \tilde{D}_2$ must have points in common).
  Since $q_2=0$ on $\gamma_i$ for $i=1,\ldots,k$ we must also have that $q_1=0$
  on those curves.  By Proposition \ref{pr:trajectories_and_constants}
  the curves $\pi \circ \gamma_i$ are trajectories of $\alpha$.
 \end{proof}
 \begin{remark}  \label{re:functional_extension}
  Assuming only that $D_1$ is a conformal disk
  one can extend the functional $m(D_1,D_2,\alpha)$ to general simply connected
  domains $D_2 \subseteq D_1$ in various ways (for example, by writing it in
  terms of derivatives of Green's function \cite{Schippers_conf_inv_analyse}
  or approximating the simply
  connected domain by simple ones).  This is easily done for arbitrary
  quadratic differentials with finitely many poles and zeros, although we will
  not show this here.

  Furthermore, by conformal invariance we can without loss of generality assume that
  $D_1=\disk$ and $D_2 = f(\disk)$ for a conformal map $f$.
  Applying the Schiffer variational technique  \cite[Theorem II.29]{Oli_thesis}
  for bounded univalent functions
  shows that the extremal function for an extended functional
  maps onto the disk minus trajectories of a quadratic
  differential; that is, the inner domain is simple.  Thus it can be shown that
  Theorem \ref{th:equality} holds for the extended functional.

  We will
  not pursue the general extension and equality case here.    Instead,
  in Section \ref{se:examples}, we will give specific functionals which
  extend by inspection to functionals for arbitrary simply connected $D_2$,
  which are continuous with respect to uniform convergence on compact sets.
  Furthermore they can easily be shown to satisfy the conditions of \cite[Theorem II.29]{Oli_thesis},
  so Theorem \ref{th:equality} holds for these specific functionals.
 \end{remark}

 We also have the following elementary consequence of monotonicity and
 boundedness.
 \begin{corollary}  Let $D_1$ be a conformal disk and $D_2 \subseteq D_1$ a simple domain.
  Let $\alpha$
  be a quadratic differential which is admissible for $D_1$.
  If $m(D_1,D_2,\alpha)=0$, then $m(D_1,D_3,\alpha)=0$ for all domains
  $D_3$ such that $D_2 \subseteq D_3 \subseteq D_1$.
 \end{corollary}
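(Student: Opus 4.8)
The plan is to deduce this directly from the two main structural properties already established: monotonicity (Corollary~\ref{co:monotonicity}) and the bound by zero (Theorem~\ref{th:positivity}). No new analysis is needed; the statement is a sandwich argument.

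First I would pin down the admissibility bookkeeping. Since $m(D_1,D_2,\alpha)$ is defined via the harmonic pair induced by $(D_1,D_2,\alpha)$ in the sense of Definition~\ref{de:induced_harmonic}, the very fact that this module is well-defined forces every pole of $\alpha$ lying in $D_1$ to lie in $D_2$, hence a fortiori in any $D_3$ with $D_2 \subseteq D_3 \subseteq D_1$. I would also read the statement with $D_3$ taken to be a simple domain in $D_1$ (if $D_3$ is only assumed simply connected, then $m(D_1,D_3,\alpha)$ is to be understood as the extended functional of Remark~\ref{re:functional_extension}, which inherits monotonicity and the bound by zero by approximation by simple domains). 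With these provisos, $\alpha$ is still admissible for $D_1$, and the nested triple $D_2 \subseteq D_3 \subseteq D_1$ satisfies the hypotheses of both Corollary~\ref{co:monotonicity} and Theorem~\ref{th:positivity}.

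Then I would apply Corollary~\ref{co:monotonicity} to this triple, taking $D_1$ as the outer domain, $D_3$ as the intermediate domain, and $D_2$ as the inner domain, to obtain
\[ m(D_1,D_2,\alpha) \le m(D_1,D_3,\alpha). \]
On the other hand, Theorem~\ref{th:positivity} applied to the pair $D_3 \subseteq D_1$ gives $m(D_1,D_3,\alpha) \le 0$. Combining these two inequalities with the hypothesis $m(D_1,D_2,\alpha)=0$ yields
\[ 0 = m(D_1,D_2,\alpha) \le m(D_1,D_3,\alpha) \le 0, \]
so $m(D_1,D_3,\alpha)=0$, as claimed.

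There is essentially no obstacle: the only point requiring a moment's care is the bookkeeping noted above — verifying that $D_3$ is admissible/simple so that Corollary~\ref{co:monotonicity} and Theorem~\ref{th:positivity} may both be invoked for the relevant domains, and that the (implicit) hypothesis on the location of the poles propagates up the chain. Once this is in place, the conclusion is immediate from the sandwich inequality.
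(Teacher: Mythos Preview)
Your sandwich argument --- monotonicity plus the bound by zero --- is exactly the paper's argument, and the core of the proof is correct.

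The one place you diverge is in handling the simplicity of $D_3$. You offer two workarounds: either read $D_3$ as simple by hypothesis (which weakens the corollary as stated), or invoke the extended functional of Remark~\ref{re:functional_extension} (which the paper explicitly declines to develop). The paper instead observes that simplicity of $D_3$ is \emph{automatic} from the equality hypothesis: by Theorem~\ref{th:equality}, $m(D_1,D_2,\alpha)=0$ forces $D_1 \backslash D_2$ to be a network of trajectories of $\alpha$. Any domain $D_3$ with $D_2 \subseteq D_3 \subseteq D_1$ then has $D_1 \backslash D_3$ contained in this trajectory network, so $\alpha$ itself is admissible for $D_3$ and $D_3$ is simple. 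This closes the bookkeeping gap without extra assumptions or extensions, and lets Corollary~\ref{co:monotonicity} and Theorem~\ref{th:positivity} apply directly. You should replace your workarounds with this observation.
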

 \begin{proof}
  This follows immediately from Corollary \ref{co:monotonicity} and Corollary \ref{th:positivity},
  once we know that $D_3$ is simple.  However that follows immediately from the fact that
  $D_2$ is simple as a consequence of Theorem \ref{th:equality}.
 \end{proof}
\end{subsection}
\begin{subsection}{Relation to Nehari's general inequality}
 Nehari's monotonicity theorem \cite{Nehari_some_inequalities} says the following, in the simply connected
 case. (The domains in \cite{Nehari_some_inequalities} are assumed to have analytic boundary curves, whereas
 here they might be only piecewise analytic).
 \begin{theorem}[Nehari monotonicity theorem, simply connected case] \label{th:Nehari_monotonicity_theorem}
  Let $D$ be a conformal disk and $D_1$ and $D_2$ be
  simple domains such that $D_2 \subseteq D_1 \subseteq D$.  Let $S$ be a harmonic
  function on $D$, except possibly for finitely many singularities in $D_2$.    Let $p_i$ be
  the unique functions on $D_i$ such that $p_i=0$ on $\partial D_i$ and $S+p_i$
  is harmonic on $D_i$.  Then
  \[  \int_{\partial D_2}  S \frac{\partial p_2}{\partial n} ds \geq \int_{\partial D_1} S
   \frac{\partial p_1}{\partial n} ds.  \]
 \end{theorem}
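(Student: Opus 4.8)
The plan is to realise Nehari's quantities as the module $m(D_1,D_2,\alpha)$ attached to the perfect-square quadratic differential $\alpha=(\partial p_1)^2$, and then to pass from $p_1$ back to $S$ by Green's identity (\ref{eq:Greens_third}). First dispose of the trivial case: if $S$ has no singularities at all, then each $p_i$ is harmonic on $D_i$ with vanishing boundary values, hence $p_i\equiv0$ and both sides of the asserted inequality are $0$. So assume henceforth that $S$ has at least one singularity; all its singularities lie in $D_2$, hence at positive distance from $\partial D_1$.

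Set $\alpha=(\partial p_1)^2$ (or any fixed positive multiple of it), a nonzero meromorphic quadratic differential on $D_1$. Because the singularities of $p_1$ all lie in $D_2$, the function $p_1$ is harmonic in a neighbourhood of $\partial D_1$ and vanishes identically on each of its analytic boundary arcs; thus $p_1$ reflects harmonically across $\partial D_1$ and $\alpha$ extends holomorphically there. Differentiating $p_1\circ\gamma\equiv0$ along such an arc $\gamma(t)$ gives $\text{Re}\left(\frac{\partial p_1}{\partial z}(\gamma(t))\,\gamma'(t)\right)=0$, so $\left(\frac{\partial p_1}{\partial z}(\gamma(t))\right)^2\gamma'(t)^2\le 0$, with equality only at the finitely many zeros of $\partial p_1$ on the arc. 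Hence all but finitely many boundary points of $D_1$ lie on trajectories of $\alpha$, so $\alpha$ is admissible for $D_1$ (Definition \ref{de:admissible}), has no poles on $\partial D_1$, and has all its poles inside $D_2$. Being a perfect square, $\alpha$ has no zeros or poles of odd order, so its adapted double cover is disconnected and Remark \ref{re:disconnected_doubling} applies: the induced harmonic pair on $D_1$ is $\pm(p_1,p_2)$ up to a positive normalising constant, with $p_2$ exactly the function of the statement, and $m(D_1,D_2,\alpha)$ is a positive multiple of $\int_{\partial D_2}p_1\ast dp_2=\int_{\partial D_2}p_1\,\frac{\partial p_2}{\partial n}\,ds$ (via (\ref{eq:nehari_hodge_star}), the integral interpreted as in Definition \ref{de:integral_with_corners} and Theorem \ref{th:convergence_and_nice_approximation}). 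Theorem \ref{th:positivity} then gives $\int_{\partial D_2}p_1\,\frac{\partial p_2}{\partial n}\,ds\le0$.

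It remains to prove the identity
\[
 \int_{\partial D_2}S\,\frac{\partial p_2}{\partial n}\,ds-\int_{\partial D_1}S\,\frac{\partial p_1}{\partial n}\,ds=-\int_{\partial D_2}p_1\,\frac{\partial p_2}{\partial n}\,ds,
\]
after which the theorem follows by combining it with the inequality above. To prove it, fix a domain $\Omega$ with $\overline{\Omega}\subseteq D_2$, bounded by finitely many analytic Jordan curves and containing every singularity of $S$, and set $h_i=S+p_i$, which is harmonic on $D_i$. Since $p_i=0$ and $S=h_i$ on $\partial D_i$, we have $\int_{\partial D_i}S\,\frac{\partial p_i}{\partial n}\,ds=\int_{\partial D_i}(h_i\,\frac{\partial p_i}{\partial n}-p_i\,\frac{\partial h_i}{\partial n})$, and applying (\ref{eq:Greens_third}) on $D_i\setminus\Omega$ (where both $h_i$ and $p_i$ are harmonic) carries this integral to the same integrand over $\partial\Omega$. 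Subtracting the two cases, substituting $h_2=h_1+(p_2-p_1)$ and $p_2=p_1+(p_2-p_1)$ and cancelling, the difference collapses — using that $h_1$ and $p_2-p_1$ are harmonic on $\Omega$, so $\int_{\partial\Omega}(h_1\,\frac{\partial(p_2-p_1)}{\partial n}-(p_2-p_1)\,\frac{\partial h_1}{\partial n})$ vanishes by (\ref{eq:Greens_third}) — to $\int_{\partial\Omega}(p_2\,\frac{\partial p_1}{\partial n}-p_1\,\frac{\partial p_2}{\partial n})$; one last application of (\ref{eq:Greens_third}) on $D_2\setminus\Omega$, together with $p_2=0$ on $\partial D_2$, rewrites this as $-\int_{\partial D_2}p_1\,\frac{\partial p_2}{\partial n}\,ds$. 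The corners of $\partial D_i$ are handled, as in the proof of Theorem \ref{th:positivity}, by replacing $\partial D_i$ with $F_i(C_r)$ for conformal bijections $F_i:\disk\to D_i$ and letting $r\nearrow1$.

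The main obstacle is this second step: the bookkeeping with the auxiliary region $\Omega$ used to bypass the singularities of $S$, keeping track of orientations of $\partial\Omega$ and of which function is harmonic on which region, while systematically exploiting the vanishing of $p_i$ on $\partial D_i$. The rest — realising $\alpha=(\partial p_1)^2$ as a Nehari quadratic differential and invoking Theorem \ref{th:positivity} — is immediate, and the computation incidentally shows that Nehari's remainder term $\int_{\partial D_2}S\,\frac{\partial p_2}{\partial n}\,ds-\int_{\partial D_1}S\,\frac{\partial p_1}{\partial n}\,ds$ is a positive multiple of $-m(D_1,D_2,(\partial p_1)^2)\ge0$.
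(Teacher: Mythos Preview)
Your proof is correct and follows essentially the paper's own route via Proposition \ref{pr:Nehari_special_case}: realise Nehari's functional as the module attached to a perfect-square quadratic differential on a disconnected double cover, then invoke the positivity/monotonicity results of Section \ref{se:monotonicity}. The only cosmetic difference is that the paper works with $\alpha=4(\partial p)^2$ on the outermost domain $D$ and appeals to Corollary \ref{co:monotonicity}, whereas you take $\alpha=(\partial p_1)^2$ on $D_1$ and appeal directly to Theorem \ref{th:positivity}; you also spell out, via the auxiliary region $\Omega$, the Green's-identity computation linking $M$ and $m$ that the paper cites from \cite{Schippers_derivative_Nehari}.
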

 Thus defining the functional
 \[  M(D,D_1,S)=\int_{\partial D_1} S
   \frac{\partial p_1}{\partial n} ds  \]
 we have that $M(D,D_2,S) \geq M(D,D_1,S)$
 whenever $D_2 \subseteq D_1 \subseteq D$.  We call this functional the
 ``Nehari functional''.  If we let $p$ be the unique
 function on $D$ such that $S+p$ is harmonic and $p=0$ on $\partial D$, then
 the lower bound of this functional is
 \[  M(D,D,S) = \int_{\partial D} S \frac{\partial p}{\partial n} ds.  \]
 It can be shown using Green's identity \cite{Schippers_derivative_Nehari}
 that
 \[   M(D,D_1,S) - M(D,D,S)  =\int_{\partial D_1} S \frac{\partial p_1}{\partial n} \,ds - \int_{\partial D} S \frac{\partial p}{\partial n} \,ds
    = -\int_{\partial D_1} p \frac{\partial p_1}{\partial n} \,ds.  \]
 Since $p - p_1 =(p+S)-(S+p_1)$ is harmonic on $D_1$ we can rewrite this as
 \begin{equation} \label{eq:constant_term}
   M(D,D_1,S) - M(D,D,S) = M(D,D_1,-p).
 \end{equation}
 Note that $M(D,D,-p)=0$.

 This leads to the following Proposition.
 \begin{proposition} \label{pr:Nehari_special_case}  Let
 $D$ be a conformal disk and $D_1 \subseteq D$ a simple subdomain, and
 $S$ be a singularity function on $D$ as in Nehari's theorem \ref{th:Nehari_monotonicity_theorem}.
  Let $M(D,D_1,S)$ be the resulting Nehari functional.  Letting $p$ be the unique
  function on $D$ such that $p=0$ on $\partial D$ and $S+p$ is harmonic, setting $\hat{S}=-p$ we have
  $M(D,D_1,S)= M(D,D_1,\hat{S}) + C$ where $C$ is independent of $D_1$.
  With this choice of singularity function $M(D,D,\hat{S})=0$.
  Furthermore
  \[  \alpha = 4 \left( \partial p \right)^2  \]
  is admissible for $D$ and
  \[  M(D,D_1,\hat{S}) = - \frac{1}{2} m(D,D_1,\alpha).  \]
 \end{proposition}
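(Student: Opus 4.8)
The plan is to dispatch the four assertions in order, leaning on the identity (\ref{eq:constant_term}) for the first two and on the structure theory of Sections~2--3 for the last two.

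\emph{The first two claims.} Equation (\ref{eq:constant_term}) reads $M(D,D_1,S)-M(D,D,S)=M(D,D_1,-p)$; so with $\hat S=-p$ and $C=M(D,D,S)$ one gets $M(D,D_1,S)=M(D,D_1,\hat S)+C$, and $C$ plainly does not involve $D_1$. For $M(D,D,\hat S)=0$: the function attached to the singularity $\hat S=-p$ on $D$ is $p$ itself, because the conditions $\hat p=0$ on $\partial D$ and $\hat S+\hat p=-p+\hat p$ harmonic on $D$ are met by $\hat p=p$ and have a unique solution; hence $M(D,D,\hat S)=\int_{\partial D}\hat S\ast d\hat p$ has an integrand containing the factor $\hat S=-p$, which vanishes on $\partial D$. (This is the already-noted fact $M(D,D,-p)=0$.)

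\emph{Admissibility of $\alpha=4(\partial p)^2$.} Since $p$ is single-valued, $\partial p=(\partial p/\partial z)\,dz$ is a single-valued $(1,0)$-form whose coefficient is holomorphic off the singularities of $p$ (there $p$ is harmonic) and meromorphic at them, so $\alpha$ is a genuine meromorphic quadratic differential, its poles lying among the singularities of $S$, which are in $D_1$. By conformal invariance assume $D=\disk$. As $p$ is harmonic near $S^1$ and vanishes on it, Schwarz reflection extends $p$, hence $\alpha$, across $S^1$. For a smooth parametrization $\gamma(t)$ of a boundary arc, $\frac{d}{dt}p(\gamma(t))=2\,\mathrm{Re}\bigl(\tfrac{\partial p}{\partial z}(\gamma(t))\gamma'(t)\bigr)=0$ because $p\equiv 0$ on $S^1$; therefore $\tfrac{\partial p}{\partial z}(\gamma(t))\gamma'(t)$ is purely imaginary, so the local expression $Q(\gamma(t))\gamma'(t)^2=4\bigl(\tfrac{\partial p}{\partial z}(\gamma(t))\gamma'(t)\bigr)^2$ is $\le 0$, strictly $<0$ wherever $\partial p/\partial z\neq 0$. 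Since $\partial p/\partial z$ is holomorphic near $S^1$ and not identically $0$ (otherwise $p\equiv 0$ and there is nothing to prove), it has finitely many zeros on $S^1$, so all but finitely many boundary points lie on trajectory arcs; this is Definition~\ref{de:admissible}.

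\emph{The identity $M(D,D_1,\hat S)=-\tfrac12 m(D,D_1,\alpha)$.} Here $\beta:=2\partial p$ is a single-valued meromorphic one-form on $D$ with $\beta^2=\alpha$, so $\alpha$ has no zeros or poles of odd order, the adapted cover is disconnected, and Remark~\ref{re:disconnected_doubling} gives $m(D,D_1,\alpha)=2\int_{\partial D_1}p_1\ast dp_2$, where $p_1=\mathrm{Re}(y_1)$ for a primitive $y_1$ of $\beta$, and $p_1-p_2$ is harmonic on $D_1$ with $p_2=0$ on $\partial D_1$. I would then identify $p_1=p$: from $dy_1=2\partial p$ one gets $\partial(\mathrm{Re}\,y_1-p)/\partial z=0$, so $\mathrm{Re}(y_1)-p$ is real and anti-holomorphic, hence locally constant; moreover $\mathrm{Re}(y_1)$ is single-valued since every period $\oint\beta=2\oint(\partial p/\partial z)\,dz$ has real part $\oint dp=0$; matching normalizations (both $\mathrm{Re}(y_1)$ and $p$ vanish on $\partial D$) yields $p_1=p$. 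Then $p_2$ is exactly the Nehari function of $\hat S=-p$ on $D_1$, i.e.\ $p_2=\hat p_1$, and using (\ref{eq:nehari_hodge_star}),
\[
 M(D,D_1,\hat S)=\int_{\partial D_1}\hat S\ast d\hat p_1=-\int_{\partial D_1}p\ast d\hat p_1=-\tfrac12\,m(D,D_1,\alpha).
\]
The main obstacle is the bookkeeping in this last paragraph: verifying the single-valuedness of $\mathrm{Re}(y_1)$ through the purely-imaginary-period computation, and keeping careful track of the factor $2$ coming from the disconnected double cover and of the signs; the remaining steps are routine.
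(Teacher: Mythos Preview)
Your proof is correct and follows essentially the same route as the paper: the first two claims via equation~(\ref{eq:constant_term}), admissibility via the vanishing of the tangential derivative of $p$ on $\partial D$, and the final identity via Remark~\ref{re:disconnected_doubling} after identifying the harmonic pair with $(p,p_1)$. The paper's version is considerably more terse (it just observes that $p-p_1=(p+S)-(S+p_1)$ is harmonic and $p_1=0$ on $\partial D_1$, so $\pm(p,p_1)$ is the induced harmonic pair), whereas you supply the extra bookkeeping on single-valuedness of $\mathrm{Re}(y_1)$ and finiteness of boundary zeros; this added care is fine and does not change the argument.
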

 \begin{proof}  To be consistent with Nehari's notation, without loss of generality
  we choose $D = \disk$ and set
  \[  \alpha = Q(z)dz^2 = 4 \frac{\partial p}{\partial z} dz^2.  \]
  The first two claims were proven above (equation (\ref{eq:constant_term})
  and immediately following).

   To see that $Q(z)dz^2$ is admissible for $D$, observe that
  since $p$ is zero on $\partial D$
  \[  \frac{\partial p}{\partial s} \,ds =0  \]
  along $\partial D$, and thus $\partial p/\partial z$ is pure imaginary on $\partial D$.
  Thus $Q(z)dz^2 \leq 0$ on $\partial D$, that is $\partial D$ is a trajectory of $Q(z)dz^2$.

  The final claim follows from Remark \ref{re:disconnected_doubling} once we observe that $p-p_1 = p+S - (p_1+S)$ is harmonic
  on $D_1$ and $p_1=0$ on $\partial D_1$; hence $\pm(p,p_1)$ is the harmonic pair associated with $(D,D_1,Q(z)dz^2)$.
 \end{proof}

 In other words, without loss of generality, we can assume that the singularity function in
 Nehari's theorem is the primitive of the square root of a quadratic differential.  This
 shows that Corollary \ref{co:monotonicity} significantly generalizes the simply connected case of
 Nehari's theorem \ref{th:Nehari_monotonicity_theorem}
 by removing the requirement that the quadratic differential be a perfect square.
 It is evident that the techniques of this paper can also be used to  extend Nehari's theorem
 in the case of finitely connected domains.
 \begin{remark}
  In Schiffer's method, the quadratic differential is generated by taking a functional
  derivative of a fixed functional.  The differential is not completely determined by the
  functional and depends on the extremal function; furthermore admissibility of the
  map is necessary but not sufficient for extremality.  In this paper, the quadratic differential is
  fixed, and admissibility is necessary and sufficient.
 \end{remark}
 \begin{remark}
  It is natural to ask whether the functionally derivative of
  $m(D,D_1,\alpha)$ can be written in terms of $\alpha$. This is
   indeed true in the case that $\alpha$
  is a perfect square  with pole at the
  origin \cite{Schippers_derivative_Nehari}.
 \end{remark}
\end{subsection}
\end{section}
\begin{section}{Growth theorems}  \label{se:examples}
In this section we use Theorem \ref{th:positivity} and Corollary \ref{co:monotonicity} to derive a family
of growth theorems for bounded univalent functions.
\begin{subsection}{Preliminary computations}
  In this section we collect some computations that will be useful in the proof of the main application.

  First, we will write the functional in a form which is easier to compute.  In the following, we
  express the contour integrals in terms of a local parameter $z$.  For ease of presentation,
  we assume
  that there is a global parameter $z$; if the parameter is only local, the expressions are still valid
  along some subcontour.
 For any real function $h$ we introduce the notation
 \[  \frac{\partial h}{\partial s} ds = \frac{\partial h}{\partial x} \, dx + \frac{\partial h}{\partial y}\,dy. \]
  We may thus write
 \begin{equation} \label{eq:dbydz_ds_and_dn_identity}
  2 \frac{\partial h}{\partial z} \,dz = \frac{\partial h}{\partial s} ds + i \frac{\partial h}{\partial n} ds
 \end{equation}
 where $ds$ is infinitesimal arc length and $n$ is the normal to the right of direction of motion.

 We will find a more computable expression for the conformal invariants.  Let $D$ be a
 conformal disk and $D_1$ be a simple domain
 such that $D_1 \subseteq D$; let $\alpha$ be a quadratic differential canonically
 admissible for $D$.  Let $\tilde{D}$ denote the double cover of $D$ adapted to $Q(z)dz^2$.  Let $\pm(q,q_1)$
 denote the harmonic pairs induced by $(D,D_1,\alpha)$, and $x$, $x_1$ be the analytic functions
 whose single-valued positive real parts are $q$, $q_1$ respectively.

 Since $q_1 = \mbox{Re}(x_1)=0$ on $\partial \tilde{D}_1$,
 \begin{eqnarray*}
   m(D,D_1,\alpha) & = & \int_{\partial \tilde{D}_1} q \frac{\partial q_1}{\partial n} \,ds
    = \mbox{Re}\left( \int_{\tilde{D}_1} x
   \frac{\partial q_1}{\partial n} \,ds \right) \\
   & = & \mbox{Re} \left( \int_{\partial \tilde{D}_1} \left( x - x_1 \right)  \frac{\partial q_1}{\partial n} \,ds \right).
 \end{eqnarray*}
 Again using the fact that $q_1=0$ on $\tilde{D}_1$, we have that
 \[  \frac{\partial q_1}{\partial s} \,ds =0  \]
 along $\partial \tilde{D}_1$ so
 \[  \frac{\partial q_1}{\partial n}\,ds = \frac{2}{i} \frac{\partial q_1}{\partial z} \,dz  \]
 along $\partial D_1$ by (\ref{eq:dbydz_ds_and_dn_identity}).   Furthermore by the Cauchy-Riemann
 equations
 \[  \frac{\partial x_1}{\partial z} = 2 \frac{\partial q_1}{\partial z}  \]
 so
 \begin{equation*}
  m = \mbox{Re} \left( \frac{1}{i} \int_{\partial \tilde{D}_1} \left( x - x_1 \right)  \frac{\partial x_1}{\partial z} \,dz \right).
 \end{equation*}
 Furthermore, since $x - x_1$ and its $z$-derivative are non-singular analytic functions,
 \[ \int_{\partial \tilde{D}_1} \left( x - x_1 \right) \left( \frac{\partial x_1}{\partial z} - \frac{\partial x}{\partial z}
 \right)\,dz =0. \]
 Thus we have the identity
 \begin{equation} \label{eq:q_integral_identity}
   m(D,D_1,Q(z)dz^2) = \int_{\partial \tilde{D}_1} q \frac{\partial q_1}{\partial n} \,ds =  \mbox{Re} \left( \frac{1}{i}
   \int_{\partial \tilde{D}_1} \left( x - x_1 \right)  \frac{\partial x}{\partial z} \,dz \right).
 \end{equation}
\end{subsection}
\begin{subsection}{A general two-point growth theorem} \label{se:growth_theorems}
 We consider the following quadratic differential,
 for $r>0$:
 \begin{equation} \label{eq:Q_for_growth_all}
   Q(z) dz^2  = -  e^{- i \psi} \frac{(e^{i \psi} + z)^2}{z^2 (z-r)(z-1/r)} dz^2.
 \end{equation}
 For $z=e^{i \theta}$ we have that
 \[  Q(z) = r e^{-2i \theta} \frac{ 2 \cos{\left( \psi/2 - \theta /2 \right)} + 2}
   {|1- r e^{i \theta}|^2}   \]
 so $Q(z) dz^2$ is admissible for $\disk$.  If we can compute this module, then
 by conformal invariance we know the module of any quadratic differential
 with a double pole and simple zero in the interior, and double zero on $\partial \disk$.

  We require a double cover of $\disk$ branched at $r>0$ on which to compute $x$; we choose
 $\tilde{\disk} = \disk$ and
 \begin{align*}
  \pi:\tilde{\disk} & \rightarrow \disk \\
   \zeta & \mapsto T(\zeta^2)
 \end{align*}
 where
 \[  T(w)= \frac{w+r}{1+rw}.  \]
 Next we pull back the quadratic differential to the cover:
 \begin{align} \label{eq:Q_growth_all_on_cover}
  \pi^*( Q(z) dz^2) & = - \frac{e^{-i \psi} \left( e^{i \psi} + (\zeta^2 + r)/(1+r \zeta^2) \right)^2}
    {  \left( \frac{\zeta^2 + r}{1 + r \zeta^2} \right)^2 \left( \frac{\zeta^2 +r}{1+r \zeta^2}
     -r \right) \left( \frac{\zeta^2 + r}{1 + r \zeta^2} - \frac{1}{r} \right)}
     \cdot \frac{4 \zeta^2 (1-r^2)^2}{(1+r \zeta^2)^4} d\zeta^2 \nonumber \\
     & = 4 r (1 + r e^{i \psi})^2 e^{-i \psi} \frac{ (T(e^{i \psi}) + \zeta^2)^2}
     { (1 + r \zeta^2)^2 (\zeta^2 + r)^2} d\zeta^2.
 \end{align}
 \begin{theorem}  \label{th:growth_all}
  Let $Q(z)dz^2$ be given by (\ref{eq:Q_for_growth_all}).
  Then for a one-to-one conformal mapping  $f:\disk \rightarrow \disk$ such
  that $f(0)=0$ and $r \in f(\disk)$ we have that
  \begin{equation}
   m(\disk,f(\disk),Q(z)dz^2)  =  4 \pi \text{Re} \left[ - e^{i \psi}
     \log{\left( \frac{w_0 f'(0)}{f(w_0)} \frac{ 1 - |f(w_0)|^2}{1-|w_0|^2}
     \right)} \right] + \log{\left( \frac{ 1+|f(w_0)|}{1- |f(w_0)|} \frac{1 -|w_0|}{1+|w_0|}
     \right)}
  \end{equation}
  where $w_0 = f^{-1}(r)$, the first term uses the unique branch of logarithm such that
  $\log{(w_0 f'(0)/f(w_0))} \rightarrow 0$ as $w_0 \rightarrow 0$ and the second term
  uses the principal
  branch.
 \end{theorem}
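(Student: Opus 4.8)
The plan is to make every ingredient of the contour‑integral formula (\ref{eq:q_integral_identity}) explicit. First one computes the straightening map $x$ on the cover $\liftdisk=\disk$. Taking the square root of (\ref{eq:Q_growth_all_on_cover}) and decomposing it into partial fractions gives
\[
\partial x = \beta = 2\sqrt r\,e^{-i\psi/2}\left(\frac{1}{1+r\zeta^2}+\frac{e^{i\psi}}{\zeta^2+r}\right)d\zeta
= -ie^{-i\psi/2}\left(\frac{1}{\zeta-i/\sqrt r}-\frac{1}{\zeta+i/\sqrt r}\right)d\zeta-ie^{i\psi/2}\left(\frac{1}{\zeta-i\sqrt r}-\frac{1}{\zeta+i\sqrt r}\right)d\zeta,
\]
so that
\[
x(\zeta)=-ie^{-i\psi/2}\log\frac{\zeta-i/\sqrt r}{\zeta+i/\sqrt r}-ie^{i\psi/2}\log\frac{\zeta-i\sqrt r}{\zeta+i\sqrt r}+\mathrm{const}.
\]
The points $\pm i\sqrt r$ are precisely the two points of the cover over the double pole $z=0$ of $Q$, $\partial x$ has residues $\mp ie^{i\psi/2}$ there, and $(ie^{i\psi/2})^2=-e^{i\psi}$ is a square root of the leading coefficient of $Q$ at $z=0$; for $\psi\ne\pi$ the map $x$ has genuinely nonzero periods about $\pm i\sqrt r$ (the two periods being negatives of one another). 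Using $z=T(\zeta^2)$ together with the elementary identities $1+r\zeta^2=(1-r^2)/(1-rz)$ and $1+\zeta^2/r=z(1-r^2)/(r(1-rz))$, one re‑expresses $x$ with its logarithmic terms displayed via $\log z$, $\log(1-rz)$, and two logarithms holomorphic near $\overline{f(\disk)}$; the singularity of $x$ at the points over $z=0$ is then carried by $\log z$ together with one of the latter.

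Next one sets up the inner cover. Since $r\in f(\disk)$ is the branch value, $\tilde D_1:=\pi^{-1}(f(\disk))$ is the connected double cover of $f(\disk)$ branched over the point above $r$, and it is uniformized by the disk through a map $\Phi:\disk\to\tilde D_1$ with
\[
\pi\circ\Phi(w)=f\bigl(\phi_{w_0}(w^2)\bigr),\qquad \phi_{w_0}(w)=\frac{w+w_0}{1+\overline{w_0}\,w},
\]
i.e.\ $\Phi(w)^2=T^{-1}\bigl(f(\phi_{w_0}(w^2))\bigr)$; here $w_0=f^{-1}(r)$, the map $\Phi$ is odd, and the two points over $z=0$ correspond to $w=\pm\omega$ with $\omega^2=-w_0$. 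Pulling $x$ back by $\Phi$ and using $\phi_{w_0}'(-w_0)=(1-|w_0|^2)^{-1}$ and $(T^{-1})'(0)=1-r^2$, one computes $\Phi'(\omega)$ — hence the regular (constant) part of $x\circ\Phi$ at $\omega$ — explicitly in terms of $f'(0)$, $w_0$ and $r$.

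The straightening map $x_1$ of the quadratic differential induced on $f(\disk)$ is the function on $\tilde D_1$ with the same singularities as $x$ but with $\mathrm{Re}\,x_1=0$ on $\partial\tilde D_1$; equivalently $x-x_1$ is the holomorphic function on $\tilde D_1$ whose real part is the harmonic (Dirichlet) extension of $\mathrm{Re}\,x$ from $\partial\tilde D_1$. Transported to the disk by $\Phi$, this Dirichlet problem is solved in closed form using the Green's function of $\disk$ and the Blaschke factors $(w\mp\omega)/(1\mp\overline{\omega}w)$; as for $x$, the solution inherits nonzero periods about $\pm\omega$. With $x$ and $x_1$ in hand one applies (\ref{eq:q_integral_identity}): the form $(x-x_1)\,\partial x$ is meromorphic on $\tilde D_1$ with simple poles only at the two points over $z=0$, so by the residue theorem $m(\disk,f(\disk),Q(z)dz^2)$ equals $4\pi$ times the imaginary part of $e^{i\psi/2}$ times the renormalized value of $x-x_1$ at $\omega$ — the factor $4\pi$ being $2\pi i$ from the two residues combined with a factor two produced by the deck involution $\zeta\mapsto-\zeta$, which in general also contributes an additive correction term that must be carried along. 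Substituting the explicit formulas, the $\log z$‑type contributions assemble — via $\Phi'(\omega)$, $\phi_{w_0}'$ and $\omega^2=-w_0$ — into the argument $\frac{w_0 f'(0)}{f(w_0)}\cdot\frac{1-|f(w_0)|^2}{1-|w_0|^2}$ (recall $f(w_0)=r$), while the remaining logarithms — whose zeros sit at $\pm i/\sqrt r$ and whose evaluation brings in the factor $\frac{\sqrt r-1/\sqrt r}{\sqrt r+1/\sqrt r}=\frac{r-1}{r+1}$ — assemble into $\frac{1+|f(w_0)|}{1-|f(w_0)|}\cdot\frac{1-|w_0|}{1+|w_0|}$; taking real and imaginary parts and using $-e^{i\psi}=(ie^{i\psi/2})^2$ yields the stated identity. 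As a consistency check, for $f=\mathrm{id}$ both arguments equal $1$, so $m=0$, in agreement with equality for $D_2=D_1$.

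The only real difficulty is bookkeeping, not ideas. Because $-e^{i\psi}$ is not a positive real number for $\psi\ne\pi$, both $x$ and $x_1$ are genuinely multivalued, with nonzero periods about the points over the double pole, and so are the Dirichlet data on $\partial\tilde D_1$. One must therefore fix mutually compatible branches of every logarithm, verify that the period about the \emph{pair} of points over $z=0$ vanishes so that $(x-x_1)\,\partial x$ is a single‑valued meromorphic form, and track the additive imaginary constants of $x$ and $x_1$ together with the deck‑symmetry correction — these cancel only when all contributions are combined, and it is precisely their coalescence into the two conformally invariant closed‑form arguments above that constitutes the "lengthy computation" referred to in the introduction.
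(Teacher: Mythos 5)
Your proposal is correct and follows essentially the same route as the paper's proof: partial-fraction integration of $\sqrt{\pi^*Q}$ to get $x$ explicitly, an explicit uniformizer of the inner cover built from $f$, a M\"obius shift and a square root (you use the inverse direction of the paper's map $\tilde{G}(\zeta)=\sqrt{G(\zeta^2)}$, which is immaterial), the Blaschke-factor expression for $x_1$, and the residue theorem applied to $(x-x_1)\,\partial x$ at the two points over the double pole, with the deck involution supplying the factor of $2$. The only imprecision is your phrase about the period ``about the pair of points'' over $z=0$: what one actually needs is that $\partial x$ and $\partial x_1$ have equal residues at \emph{each} such point individually, so that $x-x_1$ is single-valued there — which holds by construction and is what the paper uses.
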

 \begin{proof}
  By (\ref{eq:Q_growth_all_on_cover}) we have that
  \begin{align} \label{eq:xprime_growth_all}
    x'(\zeta) & = 2 \sqrt{r} e^{-i\psi/2} \frac{(1+ r e^{i \psi})(T(e^{i\psi}) + \zeta^2)}
     {(1+r\zeta^2)(\zeta^2 + r)}  \nonumber \\ \nonumber
    & = \frac{2 \sqrt{r} e^{i \psi/2}}{\zeta^2 + r} + \frac{2 \sqrt{r} e^{-i\psi/2}}{1+ r \zeta^2} \\
    & = \frac{ i e^{i \psi/2}}{ \zeta + i \sqrt{r}} - \frac{i e^{i \psi/2}}{\zeta - i \sqrt{r}}
     + \frac{\sqrt{r} e^{- i \psi/2}}{1 + i \sqrt{r} \zeta} + \frac{\sqrt{r} e^{-i \psi/2}}
     {1 - i \sqrt{r} \zeta}.
  \end{align}
  Thus setting $\beta = i e^{i \psi/2}$ we have
  \begin{equation} \label{eq:x_for_general_growth}
    x(\zeta) = \beta \log{\frac{ \zeta + i \sqrt{r}}{\zeta - i \sqrt{r}}} - \overline{\beta}
     \log{ \frac{ 1 - i \sqrt{r} \zeta}{1 + i \sqrt{r} \zeta}}
  \end{equation}
  where we choose the principal branch of the logarithm.
  This requires some justification.
  Observing that $\zeta \mapsto (\zeta + i \sqrt{r})/(\zeta - i \sqrt{r})$
  maps $\{ |\zeta| > \sqrt{r} \}$ onto the right half plane, and
  $\zeta \mapsto (1- i \sqrt{r} \zeta)/(1+ i \sqrt{r} \zeta)$ maps $\{ |\zeta| < 1/\sqrt{r} \}$ onto the right half plane, so this function is
  well-defined.  It is easily checked that it is a primitive of $x'$.
  Since $x(1)=0$, it follows from the fact that $\partial \disk$ is a
  trajectory of $Q(z) dz^2$ that $\text{Re}(x)=0$ on $\partial \tilde{\disk}$.  It
   is also easily shown directly using $\bar{\zeta}=1/\zeta$.

   Now let $D_1$ be a simple subdomain of $\disk$ containing $0$ and $r$,
   and $\tilde{D_1}$ be $\pi^{-1}(D_1)$.
   We will find an explicit formula for $x_1$ in terms of conformal maps.
   We use the following notation.  Let $f:\disk \rightarrow D_1$ be a conformal bijection
   such that $f(0)=0$.  Let $G = S^{-1} \circ f^{-1} \circ T$ for
   \[  S(w) = \frac{w + w_0}{ 1 + \overline{w_0}w}.  \]
   It can then be checked that $G(-r)=-w_0$ and $G(0)=0$.  Furthermore $G$ is a conformal
   bijection from $T^{-1}(D_1)$ onto $\disk$.  Finally set $\tilde{G} = \sqrt{G(\zeta^2)}$,
   so that $\tilde{G}$ is a conformal map from $\tilde{D}_1$ onto $\tilde{\disk}$ such that
   $\tilde{G}(0)=0$.

   It follows immediately from the analysis of $x$ and the properties of $\tilde{G}$
   that
   \[  x_1(\zeta) = \beta \log{ \left[ \frac{\tilde{G}(\zeta) - \tilde{G}(-i \sqrt{r})}
    {\tilde{G}(\zeta) - \tilde{G}(i \sqrt{r})} \right]} -\overline{\beta} \log{ \left[
    \frac{1 - \overline{\tilde{G}(-i \sqrt{r})} \tilde{G}(\zeta)}
    {1 - \overline{\tilde{G}(i \sqrt{r})} \tilde{G}(\zeta)} \right]}  \]
    where again we use the principal branch of logarithm.

    Note that for the
    principal branch of logarithm, whenever $z$ and $w$ are both in the right half plane,
    it holds that $\log{z w} = \log{z} + \log{w}$ with no correction of the branch.  So we can
    write
    \begin{align} \label{eq:temp_log_branch_o}
      x(\zeta) - x_1(\zeta) & =   \beta \log{  \left[ \frac{\zeta+i \sqrt{r}}{\zeta - i \sqrt{r}}
      \cdot \frac{\tilde{G}(\zeta) - \tilde{G}(i \sqrt{r})}{\tilde{G}(\zeta) - \tilde{G}(-i \sqrt{r})}
    \right]} 
    - \overline{\beta} \log \left[ \frac{1 - i \sqrt{r} \zeta}{1 + i \sqrt{r} \zeta}
    \cdot  \frac{1 - \overline{\tilde{G}(i \sqrt{r})} \tilde{G}(\zeta)}
    {1 - \overline{\tilde{G}(- i \sqrt{r})} \tilde{G}(\zeta)} \right].
    \end{align}
    Observe that
    \[   \lim_{r \rightarrow 0}  \left[ \frac{\zeta+i \sqrt{r}}{\zeta - i \sqrt{r}}
      \cdot \frac{\tilde{G}(\zeta) - \tilde{G}(i \sqrt{r})}{\tilde{G}(\zeta) - \tilde{G}(-i \sqrt{r})}
    \right] = 1  \]
    so since we are using the principal branch of log the first term must approach $0$ as
    $r \rightarrow 1$, and similarly for the second term.

   Using (\ref{eq:q_integral_identity}) and (\ref{eq:xprime_growth_all}) we see that
   \begin{align} \label{eq:temp_log_branch}
    m & = 2 \pi \text{Re} \left( \beta \left. ( x- x_1) \right|_{-i \sqrt{r}}
      - \beta \left. ( x- x_1) \right|_{i \sqrt{r}} \right) \nonumber \\
      & = 4 \pi \text{Re} \left( - \beta^2 \log{ \frac{ \tilde{G}'(i\sqrt{r}) i \sqrt{r} }
      {\tilde{G}(i \sqrt{r})}} \right) +
      \log{\left[ \frac{1 + r}{1-r} \cdot \frac{ 1- |\tilde{G}( i \sqrt{r})|^2}
        {1+ |\tilde{G}( i \sqrt{r})|^2} \right]}
   \end{align}
   where we have repeatedly used $\tilde{G}(-i\sqrt{r}) = - \tilde{G}(i\sqrt{r})$
   and $\tilde{G}'( - i\sqrt{r} ) = \tilde{G}'( i \sqrt{r})$.  Note that the first
   term goes to $0$ as $r \rightarrow 1$ since this holds for
   (\ref{eq:temp_log_branch_o}), and the second term uses the
   principal branch of logarithm.   Using
   \[  \tilde{G}(i \sqrt{r}) = \sqrt{G(-r)} \text{  and  } \tilde{G}'(i\sqrt{r}) =
   \frac{i\sqrt{r} G'(-r)}{\sqrt{G(-r)}}   \]
   we obtain
   \[   m = 4 \pi \text{Re}\left[ - \beta^2 \log{\frac{(-r) G'(-r)}{G(-r)}} \right] + \log{ \left[ \frac{1+r}{1-r}
     \frac{1-|G(-r)|}{1+|G(-r)|} \right]} \]
   and by the definition of $G$, $w_0 = - G(-r)$ and $|f(w_0)|=f(w_0)=r$ we obtain
   \[  G'(-r) = \frac{1-|w_0|^2}{(1-|f(w_0)|^2)f'(0)}  \]
   and so
   \[  m = 4 \pi \text{Re} \left[ \beta^2 \log{\left( \frac{w_0 f'(0)}{f(w_0)}
     \frac{1-|f(w_0)|^2}{1-|w_0|^2} \right)} \right] + \log{ \left( \frac{1 + |f(w_0)|}{1-|f(w_0)|}
      \frac{ 1 - |w_0|}{1+|w_0|} \right)}
      \]
   where the branches of logarithm are as claimed.
 \end{proof}
 \begin{remark}  By conformal invariance, we can obtain an expression for $m(\disk,D_1,Q(z)dz^2)$
  for {\it any} quadratic differential with one simple pole, one double pole, and one double
  zero on the boundary
  by composing with disk automorphisms.
 \end{remark}
 \begin{corollary} \label{co:full_growth}
  Let $f: \disk \rightarrow \disk$ be a one-to-one conformal map such that $f(0)=0$.  Let
  \[  I(f,w) = 4 \pi \text{Re} \left[ - e^{i \psi}
     \log{\left( \frac{w f'(0)}{f(w)} \frac{ 1 - |f(w)|^2}{1-|w|^2}
     \right)} \right] + \log{\left( \frac{ 1+|f(w)|}{1- |f(w)|} \frac{1 -|w|}{1+|w|}
     \right)}  \]
     where the branches are determined as in Theorem \ref{eq:Q_for_growth_all}.  Then $I(f,w) \leq 0$ and
   equality holds at a point $w \in \disk$ if and only if for some $\theta$ $f$ maps
   onto $\disk$ minus trajectories of $e^{-2i\theta}Q(e^{i\theta}z) dz^2$
   where $Q(z)dz^2$ is given by \ref{eq:Q_for_growth_all} for $r = |f(w_0)|$.

    Furthermore, $I(f,w)$ is monotonic in the sense that
   if $f_i:\disk \rightarrow \disk$ are one-to-one conformal maps for $i=1,2$
   such that $f_1(0)=f_2(0)=0$, $f_1(w) =f_2(w)$ and $f_1(\disk) \subseteq f_2(\disk)$
   then $I(f_1,w)\leq I(f_2,w)$.
 \end{corollary}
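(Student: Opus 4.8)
The plan is to recognize $I(f,w)$ as a module $m(\disk,f(\disk),\tilde{Q})$ for a rotation $\tilde{Q}$ of the quadratic differential (\ref{eq:Q_for_growth_all}), and then to read the three assertions off Theorems \ref{th:positivity}, \ref{th:equality} and Corollary \ref{co:monotonicity}. First I would dispose of the degenerate case $f(w)=0$: then $w=0$ by injectivity, and inspecting the formula (both logarithmic arguments tend to $1$ as $w\to0$) gives $I(f,0)=0$, consistent with $m(\disk,\disk,\cdot)=0$. So assume $f(w)\neq0$, write $f(w)=re^{i\phi}$ with $r=|f(w)|>0$, and observe that the expression defining $I(f,w)$ depends on $f$ only through $wf'(0)/f(w)$ and $|f(w)|$; hence $I(f,w)=I(e^{-i\phi}f,w)$. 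The map $g:=e^{-i\phi}f$ has $g(0)=0$ and $g(w)=r>0$, so $g^{-1}(r)=w$, and when $g(\disk)$ is a simple domain Theorem \ref{th:growth_all} (with this $r$, the given $\psi$, and $w_0=w$) gives $I(f,w)=I(g,w)=m(\disk,g(\disk),Q(z)dz^2)$. Applying conformal invariance (Theorem \ref{th:conformal_invariance}) to the rotation $z\mapsto e^{-i\phi}z$, which carries $f(\disk)$ onto $g(\disk)$, then yields $I(f,w)=m(\disk,f(\disk),\tilde{Q})$, with $\tilde{Q}$ the pull-back of $Q(z)dz^2$ by that rotation: a quadratic differential admissible for $\disk$, with double pole at $0=f(0)$, simple pole at $f(w)$ and a double zero on $\partial\disk$ --- i.e. the rotated form $e^{-2i\theta}Q(e^{i\theta}z)\,dz^2$ appearing in the statement, the quantifier on $\theta$ recording the rotation matching $\arg f(w)$. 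The only friction in this step is keeping exact track of the rotation factors, which is routine.

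With this identification the simple-domain case is immediate. The only interior poles of $\tilde{Q}$, namely $0$ and $f(w)$, lie in $f(\disk)$, so Theorem \ref{th:positivity} gives $m(\disk,f(\disk),\tilde{Q})\le0$, that is $I(f,w)\le0$; Theorem \ref{th:equality} says equality holds exactly when $\disk\backslash f(\disk)$ is a network of trajectories of $\tilde{Q}$, which is the asserted characterization. For monotonicity, the common value $f_1(w)=f_2(w)$ makes the normalizing rotation, hence the differential, the same for both maps: $g_i=e^{-i\phi}f_i$ satisfy $g_i(0)=0$, $g_i(w)=r$ and $g_1(\disk)\subseteq g_2(\disk)\subseteq\disk$, so $I(f_i,w)=m(\disk,g_i(\disk),Q)$ with a single $Q$, and Corollary \ref{co:monotonicity} gives $I(f_1,w)=m(\disk,g_1(\disk),Q)\le m(\disk,g_2(\disk),Q)=I(f_2,w)$.

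Removing the simple-domain hypothesis is the main obstacle, since $m(\disk,D_2,\tilde{Q})$ is defined only for simple $D_2$ and the equality statement must survive. I would first note that $I(f,w)$ is continuous under locally uniform convergence among univalent $f$ with $f(0)=0$ (each logarithmic argument is non-vanishing and single-valued on $\disk$). After normalizing $f(w)=r>0$, put $f_n(z)=f((1-1/n)z)$: by Proposition \ref{th:density} each $f_n(\disk)$ is a simple Jordan domain, $f_n\to f$ locally uniformly, and for $n$ large $f_n^{-1}(r)=w/(1-1/n)\to w$ with $f_n(f_n^{-1}(r))=r$, so by Theorem \ref{th:growth_all} and continuity $m(\disk,f_n(\disk),Q)=I(f_n,f_n^{-1}(r))\to I(f,w)$; combined with $m(\disk,f_n(\disk),Q)\le0$ this gives $I(f,w)\le0$. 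For monotonicity I would take $E_n=f_1((1-1/n)\disk)$ and $F_n=f_2((1-1/n'(n))\disk)$, choosing $n'(n)\to\infty$ large enough that $\overline{E_n}\subset F_n$ (possible since $\overline{E_n}$ is a compact subset of $f_1(\disk)\subseteq f_2(\disk)$); these are simple, satisfy $E_n\subseteq F_n\subseteq\disk$, and contain $0,r$ for $n$ large, so Corollary \ref{co:monotonicity} gives $m(\disk,E_n,Q)\le m(\disk,F_n,Q)$, and letting $n\to\infty$ as above yields $I(f_1,w)\le I(f_2,w)$. Finally, for the equality assertion in general I would invoke the Schiffer variational argument of Remark \ref{re:functional_extension} and \cite[Theorem II.29]{Oli_thesis}: any $f$ attaining the maximal value $0$ of the functional maps $\disk$ onto the disk minus trajectories of a quadratic differential, hence onto a simple domain, which reduces that direction to the already-proved simple case; the converse is immediate since a disk slit along trajectories of $\tilde{Q}$ is a simple domain.
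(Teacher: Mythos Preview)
Your proposal is correct and follows essentially the same route as the paper: rotate $f$ so that $f(w)>0$, identify $I(f,w)$ with the module via Theorem \ref{th:growth_all}, then read off the three assertions from Theorems \ref{th:positivity}, \ref{th:equality} and Corollary \ref{co:monotonicity}, handling the non-simple case by the density/variational argument of Proposition \ref{th:density} and Remark \ref{re:functional_extension}. Your treatment is more detailed than the paper's (you spell out the approximation for monotonicity and the continuity argument, and you invoke Theorem \ref{th:conformal_invariance} explicitly where the paper just rotates the map and remarks afterward that one could equivalently have rotated the differential), but the strategy is the same.
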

 \begin{proof}  Assume first that $f(\disk)$ is simple.   Define $\theta$ by $f(w)=r e^{i\theta}$.
  The upper bound of $0$ follows from Theorems \ref{th:growth_all} and \ref{th:positivity},
  applied to the function $f_\theta(z)=e^{-i\theta}f(z)$, using the fact that $I(f,w)=I(f_\theta,w)$.
  Equality holds if and only if $f_\theta$ maps onto $\disk$ minus trajectories of $Q(z)dz^2$;
   that is, if and only if $f$ maps onto the disk minus trajectories of $e^{-2i\theta} Q(e^{i\theta}z)$.  Monotonicity
   similarly follows from Corollary \ref{co:monotonicity}.

   The general claim follows from Proposition \ref{th:density} and Remark \ref{re:functional_extension}.
 \end{proof}
 Note that we could have also phrased the proof in terms of the quadratic differential $e^{-2i\theta} Q(e^{i\theta} z) dz^2$
 rather than applying a rotation to the function.

 We now give some special cases of this theorem.
\end{subsection}
\begin{subsection}{Pick growth theorems and two-point distortion theorem of Ma and Minda}
  Theorem \ref{th:growth_all} implies the classical growth estimates for bounded
  univalent functions.  Choosing $e^{i \psi} =- 1$ we obtain
  \begin{equation} \label{eq:upper_growth_normalized}
    I_{lower}(f,w) = \log{ \left[ \frac{|w||f'(0)|}{(1+|w|)^2} \frac{(1 + |f(w)|)^2}{|f(w)|} \right]}
     \leq 0
  \end{equation}
  which after exponentiating becomes
  \[  |f'(0)| \cdot \frac{|w|}{(1+|w|)^2} \leq \frac{|f(w)|}{(1 + |f(w)|)^2}  \]
  which is equivalent to the lower bound in Pick's growth theorem for bounded univalent
  functions.  Similarly, choosing $e^{i\psi}=1$ we obtain
  \begin{equation} \label{eq:lower_growth_normalized}
   I_{upper}(f,w) = \log{ \left[ \frac{|f(w)|}{(1-|f(w)|)^2} \cdot \frac{(1-|w|)^2}{|w| |f'(0)|}
    \right]} \leq 0
  \end{equation}
  whose exponential gives
  \[   \frac{|f(w)|}{(1-|f(w)|)^2}  \leq   |f'(0)| \cdot \frac{|w|}{(1-|w|)^2}  \]
  which is equivalent to the upper bound in Pick's growth theorem.  However the result is
  stronger in
  that the quantities (\ref{eq:upper_growth_normalized}) and (\ref{eq:lower_growth_normalized})
  are in fact monotonic in the sense of Theorem \ref{th:growth_all}.  That is, given
 one-to-one conformal maps $f_i:\disk \rightarrow \disk$ such that $f_1(0)=f_2(0)=0$,
 $f_1(w)=f_2(w)$, and $f_1(\disk) \subseteq f_2(\disk)$ then $I_{lower}(f_1,w) \leq I_{lower}(f_2,w)$,
 and similarly for $I_{upper}$.

  In fact, we can write the growth theorems in a form due to Ma and Minda (somewhat
  modified, algebraically).  In this case the module $m$  can be written in terms
  of more familiar conformal invariants.  Let
  \begin{equation}  \label{eq:hyperbolic_disk}
   \lambda_{\disk}(u) = \frac{1}{1-|z|^2} ;  \ \ \ \ d_{\Omega}(u,v) = \frac{1}{2} \log \frac{ 1+ \left|(u-v)/(1-\bar{v}u)\right|}
     {1- \left|(u-v)/(1-\bar{v}u)\right|}
  \end{equation}
  denote the hyperbolic line element and distance functions on $\disk$ respectively.  Let $\lambda_{D_1}$
  and $d_{D_1}$ denote the hyperbolic line element and distance function on $D_1$, which can be explicitly
  written for a conformal bijection $F:D_1 \rightarrow \disk$ as
  \begin{equation}  \label{eq:hyperbolic_general}
   \lambda_{D_1}(v) = \frac{|F'(v)|}{1-|F(v)|^2}   \ \ \ \text{and} \ \ \ d_{D_1}(u,v) = d_{\disk}(F(u),F(v)).
  \end{equation}
  For simply connected Riemann surfaces $D$ and $D_1$ with hyperbolic metrics, such that $D_1 \subseteq D$, define
  \begin{equation} \label{eq:J_lower_and_upper}
    J_{lower}(D,D_1,u,v) = \frac{e^{-4d_{D_1}(u,v)}-1}{e^{-4d_D(u,v)}-1}
     \cdot \frac{\lambda_{D}(u)}{\lambda_{D_1}(u)} \ \ \ \ \ J_{upper}(D,D_1,u,v) =  \frac{e^{4d_D(u,v)}-1}{e^{4d_{D_1}(u,v)}-1} \cdot
    \frac{\lambda_{D_1}(u)}{\lambda_{D}(u)}.
  \end{equation}
  \begin{theorem}  Let $D$ be a conformal disk and $D_1 \subseteq D$ simple.  Fix points $u$ and $v$ in $D$ and let
   $\rho, \tau \in \partial D$
   be the terminal points of the geodesic through $u$ and $v$, arranged in the order $\rho, u, v,
   \tau$.  Let $\alpha_\rho$ be the unique quadratic differential admissible for $D$
   with a double pole at $u$, a simple pole at $v$, and a double zero at $\rho$ and
   no other zeros or poles in the closure of $D$.  Let $\alpha_\tau$
   be the unique quadratic differential with a double pole at $u$, a simple pole
   at $v$ and a double zero at $\tau$ and no other zeros or poles.  Then
   for any simple domain $D_1$ containing $u$ and $v$,
   \[   {m(D,D_1, Q_\tau(z)dz^2)} = \log J_{lower}(D,D_1,u,v)  \]
   and
   \[  {m(D,D_1,Q_\rho(z)dz^2)} =  \log J_{upper}(D,D_1,u,v).  \]
  \end{theorem}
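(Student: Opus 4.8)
The plan is to reduce, via the conformal invariance of Theorem~\ref{th:conformal_invariance}, to the normalized situation already analyzed in Theorem~\ref{th:growth_all}, and then to match the explicit formula obtained there with the (rearranged) Ma--Minda expressions $J_{lower}$, $J_{upper}$ by a direct computation with the hyperbolic metric. First I would choose a conformal bijection $F:D\rightarrow\disk$ with $F(u)=0$; post-composing with a rotation we may assume $F(v)=r$ for some $r\in(0,1)$. Since $F$ is a hyperbolic isometry taking $u$ to $0$ and $v$ to $r$, it carries the geodesic through $u$ and $v$ to the hyperbolic geodesic of $\disk$ through $0$ and $r$, namely the diameter $(-1,1)$; the prescribed order $\rho,u,v,\tau$ then forces $F(\rho)=-1$ and $F(\tau)=1$. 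All the quantities $\lambda_D(u)$, $\lambda_{D_1}(u)$, $d_D(u,v)$, $d_{D_1}(u,v)$ of (\ref{eq:J_lower_and_upper}) transform naturally under $F$, and $m$ is conformally invariant, so it suffices to prove the two identities when $D=\disk$, $u=0$, $v=r$, $\rho=-1$, $\tau=1$ and $D_1=f(\disk)$ for a one-to-one conformal $f:\disk\rightarrow\disk$ with $f(0)=0$ and $r\in f(\disk)$.

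The second step is to identify the normalized quadratic differentials. Admissibility for $\disk$ forces the differential, via Schwarz reflection, to extend to a rational quadratic differential on $\sphere$ whose only singularities in $\overline{\disk}$ are a double pole at $0$, a simple pole at $r$, and a double zero at $\tau=1$ (resp.\ $\rho=-1$); reflecting, the remaining singularities are a simple pole at $1/r$ and a double pole at $\infty$, and the Riemann--Roch count of zeros and poles of a quadratic differential on $\sphere$ shows there are no others. Hence the push-forward of $\alpha_\tau$ under $F$ is, with the scaling convention used in (\ref{eq:Q_for_growth_all}), the differential (\ref{eq:Q_for_growth_all}) with $e^{i\psi}=-1$, so that the double zero $-e^{i\psi}$ sits at $1=\tau$; likewise the push-forward of $\alpha_\rho$ is (\ref{eq:Q_for_growth_all}) with $e^{i\psi}=1$, with double zero at $-1=\rho$. (The trajectory condition on $\partial\disk$, already verified after (\ref{eq:Q_for_growth_all}), selects this normalization, so no ambiguity remains.)

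The third step applies the computation of Theorem~\ref{th:growth_all}, or rather its specializations (\ref{eq:upper_growth_normalized}) and (\ref{eq:lower_growth_normalized}): writing $w_0=f^{-1}(r)$, one gets $m(\disk,f(\disk),Q_\tau(z)dz^2)=I_{lower}(f,w_0)$ and $m(\disk,f(\disk),Q_\rho(z)dz^2)=I_{upper}(f,w_0)$. It then remains to check $I_{lower}(f,w_0)=\log J_{lower}(\disk,f(\disk),0,r)$ and $I_{upper}(f,w_0)=\log J_{upper}(\disk,f(\disk),0,r)$. Using (\ref{eq:hyperbolic_disk})--(\ref{eq:hyperbolic_general}) with $F=f^{-1}$ we have $\lambda_{\disk}(0)=1$, $\lambda_{f(\disk)}(0)=1/|f'(0)|$, $d_{\disk}(0,r)=\tfrac12\log\tfrac{1+r}{1-r}$ and $d_{f(\disk)}(0,r)=d_{\disk}(0,w_0)=\tfrac12\log\tfrac{1+|w_0|}{1-|w_0|}$, so that
\[
 e^{-4d_{\disk}(0,r)}-1=\frac{-4r}{(1+r)^2},\qquad e^{-4d_{f(\disk)}(0,r)}-1=\frac{-4|w_0|}{(1+|w_0|)^2}.
\]
Thus $J_{lower}=\dfrac{|w_0|(1+r)^2}{r(1+|w_0|)^2}\,|f'(0)|$, and $\log J_{lower}$ coincides with $I_{lower}(f,w_0)$ after substituting $f(w_0)=r$; the analogous computation with $e^{4d}-1$ gives $J_{upper}=\dfrac{r(1-|w_0|)^2}{|w_0|(1-r)^2|f'(0)|}$ and $\log J_{upper}=I_{upper}(f,w_0)$. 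This proves both identities.

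I expect the main obstacle to be bookkeeping rather than anything conceptual: pinning down precisely which normalized differential in (\ref{eq:Q_for_growth_all}) corresponds to $\alpha_\rho$ and which to $\alpha_\tau$ (equivalently, checking that the geodesic endpoints are exactly the admissible positions for the double zero, which is the content of the ``interpolation'' picture), and then carrying the slightly rearranged quantities $J_{lower},J_{upper}$ through the hyperbolic-metric substitutions without sign or branch errors. One should also note that the scope of Theorem~\ref{th:growth_all} already covers the simple $D_1$ appearing in the present statement; for non-simple $D_1$ the identities would follow from Proposition~\ref{th:density} together with Remark~\ref{re:functional_extension}, exactly as in Corollary~\ref{co:full_growth}.
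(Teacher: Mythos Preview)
Your proposal is correct and follows essentially the same approach as the paper's own proof: reduce by the conformal invariance of $m$ and of $J_{lower},J_{upper}$ to the normalized case $D=\disk$, $u=0$, $v=r$, identify $\alpha_\tau$ and $\alpha_\rho$ with the differentials (\ref{eq:Q_for_growth_all}) for $e^{i\psi}=-1$ and $e^{i\psi}=1$ respectively, and then match (\ref{eq:upper_growth_normalized}) and (\ref{eq:lower_growth_normalized}) against the hyperbolic expressions via (\ref{eq:hyperbolic_disk})--(\ref{eq:hyperbolic_general}). Your bookkeeping of which endpoint and which value of $e^{i\psi}$ goes with which of $J_{lower}$, $J_{upper}$ is in fact cleaner than the paper's.
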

  \begin{proof}
   Observe that the expressions $J_{lower}(D,D_1,u,v)$ and $J_{upper}(u,v)$ are conformally
   invariant in the sense that if $g:D \rightarrow E$ is a conformal bijection then
   $J_{lower}(g(D),g(D_1),g(u),g(v)$.   Now observe that if $Q(z)dz^2$
   is given by (\ref{eq:Q_for_growth_all}) with the specific value $e^{i\psi}=-1$, then $\alpha_\tau = g^*Q(z)dz^2$ w
   where $g:D \rightarrow \disk$ is a conformal bijection such that $g(\tau)=-1$, $g(v)=0$ and $g(u)=r$.
   By Theorem \ref{th:conformal_invariance} it thus suffices to prove the claim for $D=\disk$, $u=0$,
   $v=f(w)=r$
   (in which case we will have $g(\tau)=-1$ since $g$ is a hyperbolic isometry).  Similarly for $I_{upper}$.

   Let $f:\disk \rightarrow D_1$ be a conformal bijection such that $f(0)=0$.
   By (\ref{eq:hyperbolic_disk}) and (\ref{eq:hyperbolic_general}) with $F=f^{-1}$ we have that
   \[  \lambda_{\disk}(0) = 1, \ \ d_{\disk}(0,f(w)) = \frac{1}{2} \log \frac{ 1+|f(w)|}{1-|f(w)|}   \]
   and
   \[ \lambda_{D_1}(0) = \frac{1}{|f'(0)|} \ \ d_{D_1}(0,f(w)) = \frac{1}{2} \log \frac{ 1+|w|}{1-|w|}.  \]
   So
   \[  J_{lower}(\disk,D,u,v)=\frac{|w||f'(0)|}{(1+|w|)^2} \cdot \frac{(1 +|f(w)|)^2}{|f(w)|}  \]
   and
   \[  J_{upper}(\disk,D,u,v)= \frac{(1-|w|)^2}{|w||f'(0)|} \cdot \frac{|f(w)|}{(1-|f(w)|)^2}.  \]
   This completes the proof.
  \end{proof}

  Since simple domains are dense by Proposition \ref{th:density},
  we have a monotonic, conformally invariant version of the growth theorem of Pick/Ma-Minda.
  \begin{corollary}  Let $D$ and $D_1$ be simply connected hyperbolic Riemann surfaces
   with hyperbolic metrics such that $D_1 \subseteq D$ and let $u,v \in D_1$.
   Then $J_{lower}(D,D_1,u,v) \leq 0$ and $J_{upper}(D,D_1,u,v) \leq 0$ with equality
   if and only if $D_1$ is $D$ minus trajectories of $\alpha_\tau$ or $\alpha_\rho$ respectively.
   Furthermore, if $D_1 \subseteq D_2 \subseteq \disk$ then $J_{upper}(D_1,u,v) \leq J_{upper}(D_2,u,v)$ and
   $J_{lower}(D_1,u,v) \leq J_{lower}(D_2,u,v)$.
  \end{corollary}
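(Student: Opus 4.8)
The plan is to read everything off from the preceding theorem, which for \emph{simple} $D_1$ identifies $m(D,D_1,\alpha_\tau)=\log J_{lower}(D,D_1,u,v)$ and $m(D,D_1,\alpha_\rho)=\log J_{upper}(D,D_1,u,v)$, combined with the three structural facts about the module: the bound $m\le 0$ (Theorem \ref{th:positivity}), monotonicity (Corollary \ref{co:monotonicity}), and the equality characterization (Theorem \ref{th:equality}). First I would dispose of the case that $D_1$ is simple with respect to $D$. Since $\alpha_\tau$ (resp.\ $\alpha_\rho$) is admissible for $D$ and its only interior poles are at $u,v\in D_1$, Theorem \ref{th:positivity} gives $\log J_{lower}(D,D_1,u,v)=m(D,D_1,\alpha_\tau)\le 0$, and likewise for $J_{upper}$; Theorem \ref{th:equality} then says that equality holds exactly when $D\setminus D_1$ is a network of trajectories of $\alpha_\tau$ (resp.\ $\alpha_\rho$), which is the asserted condition. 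For monotonicity, if $D_1\subseteq D_2\subseteq D$ with $D_1,D_2$ simple, Corollary \ref{co:monotonicity} applied to $\alpha_\tau$ (resp.\ $\alpha_\rho$) yields $\log J_{lower}(D,D_1,u,v)\le\log J_{lower}(D,D_2,u,v)$, and exponentiating gives the claim.

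Next I would remove the simplicity hypothesis for the inequality and for the monotonicity statement. By conformal invariance (Theorem \ref{th:conformal_invariance}) and the fact that $J_{lower},J_{upper}$ are defined purely through the hyperbolic metric and distance, there is no loss in taking $D=\disk$, $u=0$, $v=r\in(0,1)$, and $D_1=f(\disk)$ for a bounded univalent $f$ with $f(0)=0$; in these coordinates $\log J_{lower}$ and $\log J_{upper}$ are the explicit functionals $I_{lower}(f,w)$, $I_{upper}(f,w)$ of (\ref{eq:upper_growth_normalized}) and (\ref{eq:lower_growth_normalized}), which depend continuously on $f$ under locally uniform convergence. Using Proposition \ref{th:density}, approximate $D_1$ from the inside by an increasing sequence of simple domains $D_1^{(n)}$ with $\overline{D_1^{(n)}}\subset D_1^{(n+1)}$ and $D_1^{(n)}\to D_1$, apply the simple-domain case to each $D_1^{(n)}$, and pass to the limit; the monotonicity inequality follows the same way after also approximating the outer domain from the inside (for $n$ large the simple inner approximant is compactly contained in the simple outer approximant, so the simple-domain monotonicity applies before taking limits).

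The one genuinely nontrivial point is the equality statement without the simplicity hypothesis, and here I would argue as in Remark \ref{re:functional_extension}: an extremal $f$ for the continuous, conformally invariant functional $I_{lower}$ (resp.\ $I_{upper}$) satisfies the hypotheses of the Schiffer variational theorem \cite[Theorem II.29]{Oli_thesis}, so its image omits only trajectories of the associated quadratic differential --- hence $D_1$ is automatically simple --- and Theorem \ref{th:equality} then identifies the extremal configurations as $D$ minus trajectories of $\alpha_\tau$ (resp.\ $\alpha_\rho$). The expected obstacle is precisely the verification that $I_{lower}$ and $I_{upper}$ meet the hypotheses of that variational lemma and that the generating quadratic differential extracted from the variational equation is indeed $\alpha_\tau$ (resp.\ $\alpha_\rho$); but the explicit forms computed in Theorem \ref{th:growth_all} already exhibit these differentials, so this reduces to a routine, if lengthy, check.
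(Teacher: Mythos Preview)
Your proposal is correct and follows the paper's approach: the paper offers no formal proof of this corollary beyond the one-line remark ``since simple domains are dense by Proposition \ref{th:density},'' and you have correctly fleshed out exactly that argument (preceding theorem plus Theorems \ref{th:positivity}, \ref{th:equality}, Corollary \ref{co:monotonicity} for simple domains, then density and continuity for the general case, with the equality case handled via Remark \ref{re:functional_extension}). Note that the corollary as stated contains a typo --- the inequalities should read $\log J_{lower}\le 0$ and $\log J_{upper}\le 0$ (equivalently $J\le 1$), not $J\le 0$ --- and your proof implicitly corrects this.
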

 \begin{remark}  It is interesting to observe that the general growth Theorem \ref{th:growth_all}
  interpolates the upper and lower bound in the Pick/Ma-Minda growth theorem, by allowing the zero
  of the quadratic differential to move between the two ends
  of the hyperbolic geodesic passing through the points $0$ and $w$.
 \end{remark}

\end{subsection}
\begin{subsection}{Argument estimates}
 Choosing $\beta^2 = i$ and $\beta^2 = -i$ in Theorem \ref{th:growth_all}, we obtain the
 following monotonic functionals.

 \begin{corollary}  Let $f:\disk \rightarrow \disk$ be one-to-one and satisfy $f(0)=0$.
  Then
  \[ - \text{arg} \left ( \frac{f(w)}{w f'(0)} \right)
     + \log{ \left( \frac{1 + |f(w)|}{1-|f(w)|} \cdot \frac{1- |w|}{1 + |w|} \right)} \leq 0 \]
  and
  \[  \text{arg} \left( \frac{f(w)}{w f'(0)} \right)
   + \log{ \left(  \frac{1-|f(w)|}{1+|f(w)|} \frac{1 +|w|}{1-|w|} \right)}  \leq 0 \]
  where we use the branch of argument such that $\text{arg} [ f(w)/(w f'(0))]$ goes to
   $0$ as $w \rightarrow 0$ and the principal branch of logarithm.  Both expressions are monotonic in $f$
   in the sense of Corollary \ref{co:full_growth}.
 \end{corollary}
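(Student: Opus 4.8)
The plan is to obtain both inequalities by specializing the general growth Theorem~\ref{th:growth_all} and Corollary~\ref{co:full_growth} to the two purely imaginary values $\beta^{2}=i$ and $\beta^{2}=-i$ (equivalently $e^{i\psi}=-i$ and $e^{i\psi}=i$, since in the proof of Theorem~\ref{th:growth_all} one has $\beta=ie^{i\psi/2}$, hence $\beta^{2}=-e^{i\psi}$). Recall that Theorem~\ref{th:growth_all} gives, for $w_{0}=f^{-1}(r)$,
\[
 m\bigl(\disk,f(\disk),Q(z)dz^{2}\bigr)=4\pi\,\mathrm{Re}\!\left[\beta^{2}\log\!\left(\frac{w_{0}f'(0)}{f(w_{0})}\cdot\frac{1-|f(w_{0})|^{2}}{1-|w_{0}|^{2}}\right)\right]+\log\!\left(\frac{1+|f(w_{0})|}{1-|f(w_{0})|}\cdot\frac{1-|w_{0}|}{1+|w_{0}|}\right).
\]

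First I would record the one elementary observation that makes the specialization work: the factor $(1-|f(w_{0})|^{2})/(1-|w_{0}|^{2})$ is a positive real number and so contributes nothing to the argument of the logarithm, while for a purely imaginary coefficient $\mathrm{Re}(\pm i\log\zeta)=\mp\arg\zeta$, so the $\log|\cdot|$ contribution of the first term disappears entirely. Hence, writing $w=w_{0}$ for the chosen point and using $\arg(w f'(0)/f(w))=-\arg(f(w)/(w f'(0)))$, the first term of $m$ collapses to a constant multiple of $\pm\arg(f(w)/(w f'(0)))$, the sign being opposite to the chosen sign of $\beta^{2}$, while the surviving second term of $m$ is precisely the logarithmic term appearing in the statement. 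The bound $\le 0$ is then immediate from Theorem~\ref{th:positivity}, and monotonicity in $f$ follows from Corollary~\ref{co:monotonicity} exactly as Corollary~\ref{co:full_growth} was deduced: after rotating the two competing maps by a common angle so that $f_{i}(w)$ lands on the positive axis — a move that leaves the argument and modulus functionals unchanged and preserves both the inclusion $f_{1}(\disk)\subseteq f_{2}(\disk)$ and the normalizations $f_{i}(0)=0$, $f_{1}(w)=f_{2}(w)$ — one compares the two modules directly. The passage from simple inner domains to an arbitrary univalent $f$ is handled exactly as in Corollary~\ref{co:full_growth}, via Proposition~\ref{th:density} and Remark~\ref{re:functional_extension}.

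The only point that genuinely needs care — and it is the sole, mild obstacle — is the bookkeeping of branches: one must verify that the branch of $\arg(f(w)/(w f'(0)))$ produced by the formula of Theorem~\ref{th:growth_all} is the one normalized to vanish as $w\to 0$, which it is, since that formula is built from the branch of $\log(w_{0}f'(0)/f(w_{0}))$ tending to $0$ at $w_{0}=0$; and one must correctly track the interplay of this branch with the sign reversal $\arg(1/\zeta)=-\arg\zeta$ and with the two choices $\beta^{2}=\pm i$. No analytic ingredient beyond Theorems~\ref{th:growth_all} and~\ref{th:positivity} and Corollary~\ref{co:monotonicity} is needed.
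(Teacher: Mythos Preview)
Your proposal is correct and follows exactly the route the paper takes: the paper's entire argument is the single sentence preceding the corollary, ``Choosing $\beta^2 = i$ and $\beta^2 = -i$ in Theorem~\ref{th:growth_all}, we obtain the following monotonic functionals,'' and you have simply fleshed this out with the observation that a purely imaginary coefficient $\beta^2$ kills the modulus part of the logarithm and leaves only the argument, together with the same rotation/density reduction used in Corollary~\ref{co:full_growth}. Your remarks on the branch bookkeeping are also in line with the conventions fixed in Theorem~\ref{th:growth_all}.
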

 The equality statement can be deduced from Corollary \ref{co:full_growth}.
\end{subsection}
\end{section}
\begin{section}{Appendix: convergence of the contour integral}   \label{se:appendix}
 We need to show that the integral (\ref{eq:def_module}) converges, by proving Theorem
 \ref{th:convergence_and_nice_approximation}.  We do this, and also show that the definition
  is natural, with the help
 of the following parametrization of $\partial \disk$.  Let $F:\disk \rightarrow D_2$ be a conformal map.
  By Theorem \ref{th:boundary_arcs_vertices} $\partial D \backslash \mathfrak{V}$ consists
 of finitely many analytic arcs $B_i$, $i=1,\ldots,k$, with endpoints lying in $\mathfrak{V}$.
 Each analytic arc is a free boundary arc in the sense of Caratheodory \cite[Section 348]{Caratheodory_book_vol_II}.
 Let $F:\mathbb{D} \rightarrow D$ be a conformal bijection onto $D$.  We will need the following lemma.
 Although geometrically it is almost obvious, a careful proof involves many details.

  \begin{lemma} \label{le:distinguished_parametrization} Let $D_1$ be a conformal
  disk and $D_2$ be a simple domain in $D_1$.  Let $\mathfrak{V}$ be the set of
  vertices of $D_2$, and let $\{B_i\}$ be the connected components of $\partial D_2 \backslash \mathfrak{V}$.
  Let $F:\disk \rightarrow D_2$ be a conformal bijection of $\disk$ onto $D_2$.  $F$ has a continuous
  extension $\hat{F}$ to $\overline{\mathbb{D}}$.
   Let
   $e^{i \theta_1}, \ldots, e^{i\theta_m}$ be the elements of $\hat{F}^{-1}(\mathfrak{V})$,
  arranged so that $\theta_1 < \theta_2 < \ldots < \theta_m$.
  Let $A_j= \{ e^{i\theta} \,:\, \theta_j < \theta < \theta_{j+1} \}$ (where we set $\theta_{m+1} = \theta_1 + 2\pi$).
  \begin{enumerate}
   \item For each analytic boundary arc $B_i$ of $\partial D_2$, $\hat{F}$ maps precisely one or two of the arcs $A_1,\ldots,A_m$
   onto $B_i$; if it maps two separate arcs onto $B_i$ it does so with opposite orientation.
   \item The restriction of $\hat{F}$ to each $A_i$ is a one-to-one analytic parametrization.
   \item For each $A_i$, $\pi^{-1} \circ \hat{F}(A_i)$ consists of
   two disjoint analytic arcs of $\partial \tilde{D}_2$, and each branch of $\pi^{-1} \circ \hat{F}^{-1}$
   is an analytic parametrization.
   \item  The collection $\gamma_i$ of arcs $\pi^{-1} \circ \hat{F}(A_j)$
    is a complete set of maximal boundary arcs of $\partial \tilde{D}_2$, with the orientation
    induced by $\pi^{-1} \circ \hat{F}$.
  \end{enumerate}
 \end{lemma}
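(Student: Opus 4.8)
The plan is to reduce everything to two ingredients: Carath\'eodory's theory of continuation across free analytic boundary arcs, and the canonical-coordinate normal form at a regular point of $\alpha$ (Theorem \ref{th:zero_classification}(1)), together with the fact that $\partial D_2$ is locally a finite union of analytic arcs meeting at the vertices $\mathfrak{V}$ (Theorem \ref{th:boundary_arcs_vertices}). First I would record the bookkeeping that does not use analyticity. Since $\partial D_2$ is locally connected, $F$ extends continuously to $\hat{F}:\overline{\disk}\to\overline{D_2}$ with $\hat{F}(\partial\disk)=\partial D_2$. Because the $B_i$ are exactly the connected components of the finite disjoint union $\partial D_2\setminus\mathfrak{V}$, each $B_i$ is both open and closed in $\partial D_2\setminus\mathfrak{V}$, hence $\hat{F}^{-1}(B_i)$ is open and closed in $\partial\disk\setminus\{e^{i\theta_1},\dots,e^{i\theta_m}\}=\bigsqcup_j A_j$, so it is a union of some of the $A_j$. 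In particular each $A_j$ is connected and misses $\hat{F}^{-1}(\mathfrak{V})$, so $\hat{F}(A_j)$ lies in a single $B_{i(j)}$; and since $\hat{F}(\overline{A_j})$ meets $\mathfrak{V}$ only at its two endpoints while $B_{i(j)}\cap\mathfrak{V}=\emptyset$, a proper subarc would have an endpoint that is an interior point of $B_{i(j)}$ lying in $\mathfrak{V}$, which is impossible; hence $\hat{F}(A_j)=B_{i(j)}$.

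Next I would prove (2). The arc $B_{i(j)}$ is a free analytic boundary arc of $D_2$, so by Carath\'eodory \cite[Section 348]{Caratheodory_book_vol_II} $F$ continues analytically across $A_j$ to a holomorphic $\tilde{F}$ on a neighbourhood of $A_j$. I claim $\tilde{F}'$ does not vanish on $A_j$: if it vanished to order $k-1\ge 1$ at some $e^{i\theta_0}\in A_j$, then $\tilde{F}$ would carry the (connected) half-disk on the $\disk$-side of $A_j$ at $e^{i\theta_0}$ onto a sector of opening $k\pi>\pi$ at the point $p:=\hat{F}(e^{i\theta_0})\in B_{i(j)}$; but that image lies in $D_2$, and since $p$ is a regular point of $\alpha$ (it is neither a zero nor, because $D_2$ is simple, a pole), Theorem \ref{th:zero_classification}(1) realizes $B_{i(j)}$ near $p$ as a vertical segment in a canonical coordinate, whose complement has exactly two half-disk components, one of which must contain the connected image --- impossible for a sector of opening $>\pi$. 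Thus $\tilde{F}|_{A_j}$ is a local analytic diffeomorphism onto $B_{i(j)}$. Global injectivity then follows from prime-end theory: two distinct points of $A_j$ over the same $p$ would be two distinct prime ends of $D_2$ over $p$, but the canonical normal form shows $D_2$ has at most two local components at $p$, hence at most two prime ends there, and these two prime ends are separated along the prime-end circle by vertices, so they cannot both lie on the single arc $A_j$. The same "at most two, separated by vertices'' count yields (1): $\hat{F}^{-1}(B_i)$ is one or two of the $A_j$, and in the two-arc case they inherit opposite orientations because in the canonical coordinate they approach the segment $B_i$ from its two opposite sides while $\hat{F}$ is orientation-preserving.

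For (3) I would use that $\tilde{D}_2$ is a bordered surface and $\pi$ extends analytically to $\partial\tilde{D}_2$ (Remark \ref{re:double_cover_has_border}, Theorem \ref{th:primitive_is_constant}); since the zeros and poles of $\alpha$ are among the vertices, $\pi$ restricts to an unbranched double cover over each $B_i$, and a double cover of the contractible arc $B_i$ is trivial, so $\pi^{-1}(B_i)$ is two disjoint analytic arcs each carried biholomorphically to $B_i$ by a branch of $\pi^{-1}$. Composing with the analytic diffeomorphism $\hat{F}|_{A_j}:A_j\to B_{i(j)}$ shows $\pi^{-1}\circ\hat{F}(A_j)$ is two disjoint analytic arcs of $\partial\tilde{D}_2$, each analytically parametrized by one of the two lifts of $\hat{F}|_{A_j}$. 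Finally (4) is a verification of the four defining properties of a complete set of maximal boundary arcs: property (1) follows by applying $\pi^{-1}$ to $\bigcup_j\hat{F}(A_j)\cup\mathfrak{V}=\partial D_2$; property (2) because the lifted arcs have endpoints in $\pi^{-1}(\mathfrak{V})$ and $\pi$-images avoiding $\mathfrak{V}$; and properties (3)--(4) because $\pi$ is a local homeomorphism near $\partial\tilde{D}_2$ over each $B_i$, so a maximal one-sided (resp. two-sided) arc $\Gamma$ of $\partial\tilde{D}_2$ maps onto a full $B_i$ that is one-sided (resp. two-sided) for $D_2$, whence by (1) exactly one (resp. two) of the arcs $A_j$ lies over $B_i$ and exactly one lift of $\hat{F}|_{A_j}$ (resp. both) has image $\Gamma$; the orientation claims follow since $\hat{F}$ and $\pi$ are orientation-preserving, carrying the $\disk$-side of $A_j$ to the $\tilde{D}_2$-side of $\gamma$.

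The main obstacle, exactly as the paper warns, is the combined step (2)/(1): setting up the analytic continuation across $A_j$, ruling out vanishing of the derivative, and converting "number of local components of $D_2$ at a regular boundary point'' into "number of preimage arcs, with the correct relative orientations.'' Each of these is geometrically transparent, but a careful argument must invoke the prime-end correspondence under $\hat{F}$ and the canonical-coordinate normal form for trajectory arcs, and keep track of the orientation induced on the two sides of an analytic arc.
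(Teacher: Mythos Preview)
Your approach is correct and essentially the same as the paper's: Carath\'eodory for the continuous extension, the normal form of Theorem \ref{th:zero_classification} for the local two-component structure at regular boundary points, and the covering properties of $\pi$ for (3)--(4). The only notable difference is in (2): the paper obtains injectivity of $\hat{F}|_{A_i}$ in one stroke by observing that a truncated sector $U\subset\disk$ abutting a compact subarc of $A_i$ has image $F(U)$ bounded by a Jordan curve (three analytic arcs in $D_2$ together with a subarc of the free analytic boundary arc $B_j$), so Carath\'eodory's theorem gives a homeomorphic boundary extension directly, and analyticity then follows from Schwarz reflection. Your route --- analytic continuation first, then a geometric argument ruling out $\tilde{F}'=0$, then prime-end counting for global injectivity --- reaches the same conclusion with more machinery. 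Likewise for (1) the paper's pigeonhole argument (three distinct preimages of $p$ would force the injective map $F$ to send three disjoint open sets into only two local components of $D_2$) is your prime-end count phrased without the prime-end language.
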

 \begin{proof}
  By Theorem \ref{th:zero_classification}, for any $p \in \partial \disk$, we can choose a disk
  $B$ centred on $p$ such that the image of $B \cap \disk$ is bounded by a Jordan
  curve consisting of the analytic arc $F(\partial B \cap \disk)$
  and two analytic trajectories of $\alpha$.
  By Carath\'eodory's theorem \cite{Pommerenkeboundary}, $F$ extends continuously to the boundary
  of $B \cap \disk$, and in particular to an open interval
  arc of $\partial \disk$ containing $p$.  This proves that $F$ has a continuous extension
  $\hat{F}$ to $\partial \disk$.

  To prove (2), fix $A_i$ and set $U = \{ r e^{i \theta} \,: \, a_i < \theta < a_{i+1} \ \ \text{and}
   \ \ s< r <1 \}$ for some $0<s <1$ and $\theta_i < a_i < a_{i+1} < \theta_{i+1}$.
   Since $\hat{F}(A_i)$ contains no vertices of $\partial D_2$,
   $F(U)$ is bounded by a Jordan curve consisting of four analytic curves.  In particular, Carath\'eodory's
   theorem implies that the restriction of $F$ to $U$ extends homeomorphically to the boundary of $U$, and thus is
   one-to-one on $A_i$.  Since $\hat{F}(A_i)$ is an analytic
   arc, by the Schwarz reflection principle $\left. F \right|_U$
   extends to a biholomorphism of an open neighbourhood of $A_i$.

 We now prove (1).   Since $\hat{F}(A_i)$ joins two vertices of $\partial D_2$, it must be a surjection onto some $B_j$.
 We show that there is at most two pre-images of any $p \in \partial D \backslash \mathfrak{V}$.
 There is at least one pre-image of $p$ under $\hat{F}$.  Assume that
 there are three distinct pre-images, $q_1$, $q_2$ and $q_3$ say.  By the previous
 paragraph there exists an open
 disc $B(p;r)$ of $p$ and open neighbourhoods $V_i$ of $q_i$, $i=1,\ldots,3$ such that $\hat{F}$ has an
 extension to a
 biholomorphism of $V_i$ onto $B(p;r)$.  By Theorem \ref{th:zero_classification} we may take $r$ small enough
 that $B(p;r) \backslash \partial D$ consists of precisely two connected components, say $W$ and $W_*$.  By continuity
 of the extension of $\hat{F}$ we can furthermore choose $r$ small enough that $V_i$ is contained in some disc $B(q_i,r_i)$
 such that $r_i <1$ for $i=1,\ldots,3$.  For each $i$,
 the pre-image of either $W$ or $W_*$ is the connected component of $V_i \backslash \partial D$
 contained in $D$; call this $C_i$.  Thus the original map $F$ takes $C_1$, $C_2$ and $C_3$ each bijectively onto
 one of the sets $W$ and $W_*$.  This contradicts the fact that $F$ is one-to-one.  Thus there are at most
 two distinct pre-images of $p$.

  Now assume that $\hat{F}$ maps two arcs $A_i$ and $A_j$ onto $B_k$ say.
 For any $p \in B_k$, choosing a disk $B$ containing $p$ such that $B \cap D_2$ contains two connected components,
 since $F$ is orientation preserving we have that $\left. \hat{F} \right|_{A_i}$ and $\left. \hat{F} \right|_{A_j}$
 endow $B_k$ with opposite orientations.  This proves (1).

 The claims (3) and (4) follow immediately from the properties of the covering $\pi$.
 \end{proof}

 We may now prove Theorem \ref{th:convergence_and_nice_approximation}.
 \begin{proof}
  We prove both claims simultaneously.  Fix $\gamma_i$.  Let $\pi(\gamma_i) =  \hat{F}(A_j)$, say.

  Let $p \in \gamma_i$.  Since $\gamma_i$ is
  analytic there is a biholomorphism $G$ of an open set $U$ containing $p$ onto a disk $D$
  in the lower half plane such that $G(\gamma_i \cap U$) is an interval on the real line; let $J$
  be any compact sub-interval containing $G(p)$ in its interior and let $I$ be the
  compact subarc $G^{-1}(J)$ of $\gamma_i$.  The set $U$ can be chosen so that $\pi$ is a biholomorphism
  of $U$.
  We then have that  $q_2 \circ G^{-1} =
  \text{Re}(h)$ for some holomorphic $h$ on $D$, by the Schwarz reflection
  principle applied to $q_2$.    By the Cauchy-Riemann equations and conformal invariance of
  the integral (\ref{eq:star_change_o_coord})
  \[ \int_I q_1 \ast d q_2 =  \int_J q_1 \circ G^{-1} \ast d (q_2 \circ G^{-1}) =
  \text{Re} \int_J q_1 \circ G^{-1} \left( \frac{2}{i} \frac{\partial h}{\partial z} dz \right) \]
  which exists since $h$ is analytic on $J$ and $q_1$.  In particular, the integral exists on any
  compact sub-arc of $\gamma_i$.

  With notation as in Lemma \ref{le:distinguished_parametrization}, set $\hat{F}(e^{i\theta_p}) = \pi(p)$
  and observe that there is a sector
  \[ S_p= \{ re^{i\theta} \,:\, \theta_p - \epsilon \leq \theta \leq \theta_{p} + \epsilon
   \ \ \text{and} \ \ r_p \leq r \leq 1 \}  \]
  such that $\pi^{-1} \circ \hat{F}(S_p)$ is compactly contained in $U$ for one of the
  choices of $\pi^{-1}$.  Set $I_r = S_p \cap C_r$.  Since $G \circ \pi^{-1} \circ \hat{F}^{-1}$ is a holomorphic function
  of $z$ on an open neighbourhood of $S_p$, we have that
  \begin{align*}
   \lim_{r \nearrow 1}  \int_{\pi^{-1} \circ \hat{F}^{-1}(I_r)} q_1 \ast d q_2 & = \lim_{r \nearrow 1}
     \text{Re} \int_{G \circ \pi^{-1} \circ \hat{F}^{-1}(I_r)} q_1 \circ G^{-1} \frac{2}{i} \frac{\partial h}{\partial z} dz \\
     & =  \text{Re} \int_{G \circ \pi^{-1} \circ \hat{F}^{-1}(I_1)} q_1 \circ G^{-1} \frac{2}{i} \frac{\partial h}{\partial z} dz \\
    & =  \int_{\pi^{-1} \circ \hat{F}^{-1}(I_1)} q_1 \ast dq_2.
  \end{align*}

  On the other hand, if $p$ is an endpoint of a $\gamma_i$, it is a vertex.
  By Theorem \ref{th:zero_classification}
  we can find a map $\phi$ on the double cover in a neighbourhood $U$ of $p$
  so that $\pi^* \alpha$ has the form $w^n dw^2$ in a neighbourhood of $p$ (possibly $n=0$, if $p$ is
  a regular point of $\pi^*\alpha$).   Thus the trajectories map under $\phi$ to linear rays
  in the plane emanating from $0$.  We may assume that $\phi(U)$ is a disk centred at $0$, which is small enough that
  it contains no other trajectories than the rays.  Consider the connected component of $S=\phi(U) \cap \phi(\tilde{D}_2)$
  which is bounded by $\phi(\gamma_i)$ such that $\phi(\gamma_i)$ is positively oriented with respect to $S$ ($S$ is a
  radial segment of a disk).   It is bounded by an arc of a circle and another ray which must be
  $\phi(\gamma_j)$ for some $j$.
  At least one $\gamma_j$ must be such that $\phi(\gamma_j)$ is positively oriented with
  respect to the segment $S$, and we choose this one.
  Finally, choose a biholomorphism $H$ taking $S$ onto a half disk
  $\Omega = \{ z\,:\, |z| < r \ \ \text{and} \ \ \text{Im}(z) < 0\}$.
  By Carath\'eodory's theorem it extends to a homeomorphism of the boundary, and
  by composing with an automorphism of $\Omega$ we can arrange that $\phi(\gamma_i)$ and $\phi(\gamma_j)$
  map onto $(0,r)$ and $(-r,0)$.
  In summary, the map
  $G = H\circ \phi$ is a conformal bijection taking a connected component of $U \cap \tilde{D}_2$
  onto $\Omega$, and
  by Schwarz reflection $H$ has an analytic extension to a neighbourhood
  of $\gamma_i \cap U$ and to a neighbourhood of $\gamma_j \cap U$
  (we do not demand that these separate extensions agree).
  In particular, $q_2 \circ H$ extends continuously to $H(\gamma_i)$ and $H(\gamma_j)$ and equals $0$ there.
  By Schwarz reflection, $q_2 \circ H$ extends to a harmonic function on the full disk
  $\{ z \,:\, |z|<r \}$.   Choose a subarc $I$ of $\gamma_i \cap U$ with endpoint $p$,
  such that $J= H(I)$ is an interval $(0,s)$ or $(-s,0)$ where $s<r$.  Using change of
  variables
  \[   \int_I q_1 \ast d q_2 = \int_J q_1 \circ H \ast d (q_2 \circ H)     \]
  which converges since $q_2 \circ H$ has an analytic completion to $|z|<r$ and $q_1 \circ H$
  is continuous on the real line.
  Combining this with the convergence on compact subarcs of the interior, we have shown that the integral
  converges on $\gamma_i$.

  Assume now that $p$ is an initial endpoint of $\gamma_i$ with respect to its orientation. (The other
  case is similar so we omit it). As above, there is a sector
   \[S_p = \{ z=r e^{i\theta} \,:\, r_p \leq  r \leq 1 \ \ \text{and} \ \ \theta_j
    \leq \theta \leq \theta_j + \epsilon
  \}  \]
  such that $\pi^{-1} \circ \hat{F}(S_p)$ is compactly contained in $U$ for the relevant
  choice of $\pi^{-1}$.  Setting $I_r = S_p \cap C_r$ we obtain
  as above that
  \[ \lim_{r \nearrow 1} \int_{\pi^{-1} \circ \hat{F}(I_r)} q_1 \ast d q_2 = \int_{\pi^{-1} \circ \hat{F}(I_1)}
    q_1 \ast d q_2.  \]

  Set $C_r^j$ be the portion of $C_r$ between $\theta_j$ and $\theta_{j+1}$.  Since $\gamma_i$
  is compact, we have shown that for a single determination of $\pi^{-1}$ along $\hat{F}(A_i)$
  \[  \lim_{r \nearrow 1} \int_{\pi^{-1} \circ \hat{F} (C_r^j)} q_1 \ast d q_2 = \int_{\pi^{-1} \circ
  \hat{F}(A_j)} q_1 \ast d q_2  \]
  Since by part (4) of Lemma \ref{le:distinguished_parametrization} the set of such $\gamma_i$ is a
  complete set of maximal boundary arcs of $\tilde{D}_2$, this completes the proof.
 \end{proof}

 \begin{remark}  \label{re:needed_for_equality_case}
  In Nehari's paper \cite{Nehari_some_inequalities}, the problem of two-sided boundary arcs did not arise, since he
  assumed that the boundary of the domain was a finite number of closed disjoint analytic arcs.
  As we have seen, the details in this Appendix allow us to include extremal domains.
 \end{remark}

\end{section}

\end{document}